\documentclass{amsart}

\usepackage{amsmath}
\usepackage{amssymb}
\usepackage{enumerate}
\usepackage{hyperref}

\newtheorem{theorem}{Theorem}[section]
\newtheorem{lemma}[theorem]{Lemma}
\newtheorem{proposition}[theorem]{Proposition}

\theoremstyle{definition}

\theoremstyle{remark}

\newtheorem*{claim}{Claim}

\numberwithin{equation}{section}

\newcommand{\R}{\mathbb{R}}
\newcommand{\C}{\mathbb{C}}
\newcommand{\HQ}{\mathbb{H}}
\newcommand{\SU}{\mathrm{SU}}
\newcommand{\wtSU}{\widetilde{\mathrm{SU}}}
\newcommand{\U}{\mathrm{U}}
\newcommand{\wtU}{\widetilde{\mathrm{U}}}
\newcommand{\SO}{\mathrm{SO}}

\newcommand{\DD}{\mathcal{D}}
\newcommand{\OO}{\mathcal{O}}
\newcommand{\HH}{\mathcal{H}}
\newcommand{\GG}{\mathcal{G}}
\newcommand{\VV}{\mathcal{V}}
\newcommand{\LL}{\mathcal{L}}
\newcommand{\mS}{\mathcal{S}}

\newcommand{\ZZ}{\mathcal{Z}}
\newcommand{\YY}{\mathcal{Y}}
\newcommand{\X}{\mathfrak{X}}
\newcommand{\g}{\mathfrak{g}}
\newcommand{\h}{\mathfrak{h}}
\newcommand{\so}{\mathfrak{so}}
\newcommand{\su}{\mathfrak{su}}
\newcommand{\uni}{\mathfrak{u}}
\newcommand{\sli}{\mathfrak{sl}}
\newcommand{\gl}{\mathfrak{gl}}

\newcommand{\Iso}{\operatorname{Iso}}
\newcommand{\Kill}{\operatorname{Kill}}
\newcommand{\mfKill}{\mathfrak{Kill}}
\newcommand{\Hom}{\operatorname{Hom}}
\newcommand{\rad}{\operatorname{rad}}

\begin{document}

\title{On low-dimensional manifolds with isometric $\wtU(p,q)$-actions}

\author{Gestur \'Olafsson}
\address{Department of Mathematics, 322 Lockett Hall, Louisiana State
  University, Baton Rouge, LA, 70803, USA}
\email{olafsson@math.lsu.edu}

\author{Raul Quiroga-Barranco}
\address{Centro de Investigaci\'on en Matem\'aticas, Apartado Postal
  402, Guanajuato, Guanajuato, 36250, Mexico}
\email{quiroga@cimat.mx}

\thanks{The research of G.~\'Olafsson was supported by NSF grant DMS-1101337. The research of R.~Quiroga-Barranco was supported by a Conacyt grant and by SNI}

\subjclass{57S20, 53C50, 53C24}
\keywords{Pseudo-Riemannian manifolds, simple Lie groups, rigidity results}

\maketitle

\begin{abstract}
    Denote by $\wtU(p,q)$ the universal covering group of $\U(p,q)$, the linear group of isometries of the pseudo-Hermitian space $\C^{p,q}$ of signature $p,q$. Let $M$ be a connected analytic complete pseudo-Riemannian manifold that admits an isometric $\wtU(p,q)$-action and that satisfies $\dim M \leq n(n+2)$ where $n = p+q$. We prove that if the action of $\wtSU(p,q)$ (the connected derived group of $\wtU(p,q)$) has a dense orbit and the center of $\wtU(p,q)$ acts non-trivially, then $M$ is an isometric quotient of manifolds involving simple Lie groups with bi-invariant metrics. Furthermore, the $\wtU(p,q)$-action is lifted to $\widetilde{M}$ to natural actions on the groups involved. As a particular case, we prove that when $\widetilde{M}$ is not a pseudo-Riemannian product, then its geometry and $\wtU(p,q)$-action are obtained from one of the symmetric pairs $(\su(p,q+1), \uni(p,q))$ or $(\su(p+1,q), \uni(p,q))$.
\end{abstract}

\section{Introduction}
Let $G$ be a connected non-compact simple Lie group. A fundamental dynamical problem is the study of $G$-actions on manifolds. Some of the best known and interesting $G$-actions are obtained from Lie group homomorphisms $G \hookrightarrow S$ where $S$ is some connected group that admits a lattice $\Gamma$. In such situation, $G$ acts analytically by left translations on the manifold $S / \Gamma$ preserving a finite volume. If $S$ is semisimple, then the Killing form of the Lie algebra of $S$ defines a pseudo-Riemannian metric on $S/\Gamma$ which is $G$-invariant. Furthermore, if $\Gamma$ is an irreducible lattice, then the $G$-action is ergodic thus implying that almost every orbit is dense (see \cite{ZimmerSemisimple}). This provides a large family of $G$-spaces with complicated dynamics. A variation of these examples is given by considering a compact subgroup $K \subset S$ that centralizes $G$ and taking the double coset space $K \backslash S / \Gamma$ on which $G$ still acts by left translations. Zimmer proposed a program in \cite{ZimmerProgram} to study finite volume preserving ergodic $G$-actions on manifolds and even Borel spaces. It is generally expected that every such $G$-action is essentially given by a double coset space.

In the development of Zimmer's program, it has been very useful to consider a $G$-invariant geometric structure on the manifold being acted upon. Some of the most important tools were developed by Gromov in \cite{Gromov} to obtain interesting obstructions for manifolds to admit $G$-actions in the presence of a suitable geometry. We refer to our bibliography for other similar or related works. Nevertheless, there are very few results concluding that a manifold admitting a $G$-action preserving a finite volume is in fact a double coset space as above, even when there is a $G$-invariant geometry.

The main contribution of this work is to provide a very explicit description of spaces acted upon by a pseudo-unitary group preserving a pseudo-Riemannian metric for some low dimensional cases. It is also important to note that our results provide, to the best of our knowledge, some of the first global rigidity conclusions around Zimmer's program for isometric actions of a reductive group with non-compact center and not just a non-compact simple Lie group. Another example of results for actions of not necessarily simple Lie groups can be found in \cite{BFM}. The latter work deals with actions preserving parabolic geometries. On the other hand, we consider metric preserving actions which turn out to be based on non-parabolic models.

Recall that $\U(p,q)$ denotes the group of matrices that define the linear isometries of the pseudo-Hermitian space $\C^{p,q}$ of signature $p,q$, and that $\SU(p,q)$ denotes the subgroup of $\U(p,q)$ consisting of the matrices with determinant $1$. We will follow the notation where the universal covering space of $N$ is denoted by $\widetilde{N}$. In particular $\wtSU(p,q) \subset \wtU(p,q)$ denote the universal covering groups of $\SU(p,q)$ and $\U(p,q)$, respectively.

We study an isometric $\wtU(p,q)$-action on a finite volume analytic complete pseudo-Riemannian manifold $M$ so that it has a dense $\wtSU(p,q)$-orbit and so that the connected component $Z(\wtU(p,q))_0$ of the center of $\wtU(p,q)$ acts non-trivially. Furthermore, we will consider the lower dimensional case. More precisely, we will assume that the following inequality holds:
\[
    \dim M \leq \dim \U(p,q) + 2n = n(n+2) \quad \text{where } n = p+q.
\]
Observe that our bound is precisely the dimension of the groups $\SU(p,q+1)$ and $\SU(p+1,q)$.

The following main results prove that a manifold $M$ admitting a $\wtU(p,q)$-action as above can always be constructed in terms of the Lie group $\wtU(p,q)$ and their higher dimension siblings.

First, we obtain the description of $\widetilde{M}$ up to diffeomorphism.

\begin{theorem}[Diffeomorphism type]
    \label{thm:difftype}
    Let $M$ be a connected analytic complete pseudo-Riemannian manifold with finite volume that admits an analytic and isometric $\wtU(p,q)$-action. Suppose that the following are satisfied.
    \begin{itemize}
        \item The $\wtSU(p,q)$-action on $M$ has a dense orbit and the $Z(\wtU(p,q))_0$-action is non-trivial.
        \item $\dim M \leq n(n+2)$, where $p,q \geq 1$, $n = p+q \geq 3$ and $(p,q) \not= (2,2)$
    \end{itemize}
    Then, there exist a diffeomorphism $\varphi : N \rightarrow \widetilde{M}$ where the manifold $N$ is given by one of the following possibilities.
    \begin{enumerate}
        \item $N = \wtSU(p,q) \times N_1$ for some simply connected manifold $N_1$.
        \item $N$ is either $\wtSU(p,q+1)$ or $\wtSU(p+1,q)$.
        \item $N = K\backslash \widetilde{S} \times \R$ where $\widetilde{S}$ is either $\wtSU(p,q+1)$ or $\wtSU(p+1,q)$ and $K$ is a closed subgroup of $\widetilde{S}$ isomorphic to $\R$.
    \end{enumerate}
\end{theorem}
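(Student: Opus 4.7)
The plan is to combine the theory of local Killing fields for analytic rigid geometric structures --- initiated by Gromov and developed by Zimmer, Nomizu, Feres, Melnick and one of the authors --- with the representation theory of $\su(p,q)$. The pseudo-Riemannian metric is an analytic rigid geometric structure; together with the dense $\wtSU(p,q)$-orbit hypothesis, Gromov's open-dense/centralizer machinery produces a finite-dimensional Lie algebra $\mfKill$ of local Killing fields on a dense open subset of $\widetilde{M}$ into which $\uni(p,q)$ embeds and which acts locally transitively at every point where the $\wtU(p,q)$-orbit has maximal dimension. Completeness of the metric together with simple-connectedness of $\widetilde{M}$ then promotes these local fields to global Killing fields on all of $\widetilde{M}$.

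I would fix a point $x$ in the lifted dense orbit and decompose $T_x\widetilde{M} = V \oplus \mathfrak{m}$, where $V$ is tangent to the $\wtU(p,q)$-orbit and $\mathfrak{m}$ is an $\su(p,q)_x$-invariant complement realized as the evaluation at $x$ of the transverse Killing fields in $\mfKill$. The dimension hypothesis $\dim M \leq n(n+2) = \dim\uni(p,q) + 2n$ forces $\dim\mathfrak{m} \leq 2n$. Under the assumptions $p,q \geq 1$, $n \geq 3$ and $(p,q) \neq (2,2)$, the real irreducible representations of $\su(p,q)$ of real dimension at most $2n$ are only the trivial representation $\R$ and the realification of the standard representation $\C^{p,q}$; hence $\mathfrak{m} = \mathfrak{m}_0 \oplus \mathfrak{m}_1$ with $\mathfrak{m}_0$ a sum of trivial summands and $\mathfrak{m}_1$ either zero or one copy of the standard module of dimension $2n$.

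I would then use the non-trivial action of the centre $Z(\wtU(p,q))_0 \cong \R$, the Jacobi identity inside $\mfKill$, and invariance of the metric to pin down the Lie algebra structure on $\g := \uni(p,q) \oplus \mathfrak{m}$. If $\mathfrak{m}_1 = 0$, the bracket $[\su(p,q),\mathfrak{m}] \subseteq \mathfrak{m}$ must be trivial and $\g$ splits as $\uni(p,q) \oplus \mathfrak{m}_0$; integrating gives the product of case (1). If $\mathfrak{m}_1 \neq 0$, the non-trivial bracket of the central generator with $\mathfrak{m}_1$ combined with metric invariance forces $\uni(p,q)\oplus\mathfrak{m}_1$ to be the Cartan decomposition of a pseudo-Hermitian symmetric pair whose isotropy is $\uni(p,q)$ and whose tangent module is the standard $\C^{p,q}$; the only real simple Lie algebras realising this are $\su(p,q+1)$ and $\su(p+1,q)$, both of dimension $(n+1)^2 - 1$. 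Case (2) then arises when $\mathfrak{m}_0 = 0$ and the corresponding simply connected simple group $\widetilde{S}$ acts transitively, while case (3) arises when $\dim\mathfrak{m}_0 = 1$, producing an extra flat factor and realising $\widetilde{M}$ as $K\backslash\widetilde{S}\times\R$ for a closed one-parameter subgroup $K$.

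The crucial and most delicate step will be the bracket analysis in the middle: ruling out exotic extensions of $\uni(p,q)$ by $\mathfrak{m}$ compatible with the invariant metric and the non-trivial central action, so that $\g$ is forced to be either a direct product or exactly one of $\su(p,q\pm 1)$. The exclusions $n \geq 3$ and $(p,q) \neq (2,2)$ are needed precisely here, since the accidental isomorphisms in low rank (e.g.\ $\su(2,2) \cong \so(4,2)$) create additional real irreducible $\su(p,q)$-modules of dimension $\leq 2n$ and would destroy the uniqueness of the above classification. A final step integrates the global Killing algebra $\g$ on $\widetilde{M}$ and invokes completeness together with analyticity to upgrade local transitivity to global transitivity on $\widetilde{M}$, thereby identifying $\widetilde{M}$ with the claimed homogeneous model in each of the three cases.
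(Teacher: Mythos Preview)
Your overall strategy---Gromov's centralizer theorem, the representation-theoretic constraint on the normal module, and the symmetric-pair classification---matches the paper's, but your case split between conclusions (2) and (3) is wrong and this is a genuine gap.

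You distinguish (2) from (3) by whether the trivial summand $\mathfrak{m}_0$ of the normal module has dimension $0$ or $1$. But both cases (2) and (3) occur with $\dim M = n(n+2)$ exactly, so (taking $V$ tangent to the $\wtU(p,q)$-orbit, which has dimension $n^2$) your $\mathfrak{m}$ has dimension $2n$ and is forced to be a single copy of $\C^{p,q}_\R$ with $\mathfrak{m}_0 = 0$ in \emph{both} cases. The tangent-space module alone cannot see the difference.

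What actually separates the two cases in the paper is the \emph{isotropy} of the centralizer algebra $\HH$ of the $\wtSU(p,q)$-action inside $\Kill(\widetilde{M})$. In the non-integrable situation $\HH$ has dimension $(n+1)^2 = \dim M + 1$, strictly larger than your $\g = \uni(p,q)\oplus\mathfrak{m}$; the extra dimension is the isotropy subalgebra $\HH_0(x_0) \simeq \R$ at the base point. The paper shows (Proposition~4.4) that $\HH \simeq \uni(p,q+1)$ or $\uni(p+1,q)$, with simple Levi factor $\mS$ containing a one-dimensional piece $\LL$ such that $(\mS,\DD\GG(x_0)\oplus\LL)$ is the relevant symmetric pair. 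Case (2) versus (3) is then decided by whether $\LL \neq \HH_0(x_0)$ (so $ev_{x_0}|_{\mS}$ is an isomorphism and $\widetilde{S}$ acts simply transitively) or $\LL = \HH_0(x_0)$ (so $ev_{x_0}$ kills $\LL$, the center $\ZZ\GG(x_0)$ provides the extra $\R$-factor, and one obtains $K\backslash\widetilde{S}\times\R$). Your framework, which builds $\g$ only from $\uni(p,q)$ and the tangent complement $\mathfrak{m}$, never sees $\HH_0(x_0)$ and therefore cannot carry out this distinction.

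A second, smaller issue: your dichotomy ``$\mathfrak{m}_1 = 0$ versus $\mathfrak{m}_1\neq 0$'' does not coincide with the paper's dichotomy ``$T\OO^\perp$ integrable versus not''. Triviality of the normal module implies integrability (Proposition~3.8(2)(b) contrapositive), but the converse fails a priori; the paper handles case (1) via the integrability criterion and a separate structure theorem (Proposition~3.9), not via the module type. You should route the argument through integrability of $T\OO^\perp$ and, in the non-integrable branch, work with the full centralizer $\HH$ including its isotropy, not just with the tangent-level algebra $\uni(p,q)\oplus\mathfrak{m}$.
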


We recall that on the Lie group $\wtSU(p,q)$ there is a unique (up to a constant) bi-invariant pseudo-Riemannian metric and that it is defined by the Killing form of its Lie algebra. We now describe the geometry that $M$ must admit.

\begin{theorem}[Metric type]
    \label{thm:metrictype}
    With the hypotheses and notation of Theorem~\ref{thm:difftype}, we can choose $\varphi$ and replace the pseudo-Riemannian metric on $M$ by one which is still $\wtU(p,q)$-invariant so that $\varphi$ is an isometry for the following pseudo-Riemannian metric on $N$ corresponding to the cases stated in Theorem~\ref{thm:difftype}.
    \begin{enumerate}
        \item For $N = \wtSU(p,q) \times N_1$ the metric is a product metric where $\wtSU(p,q)$ carries a bi-invariant pseudo-Riemannian metric.
        \item For $N$ either $\wtSU(p,q+1)$ or $\wtSU(p+1,q)$ the metric is a bi-invariant pseudo-Riemannian metric.
        \item For $N = K\backslash \widetilde{S} \times \R$ the metric is the product metric obtained from bi-invariant pseudo-Riemannian metrics on $\widetilde{S}$ and $\R$.
    \end{enumerate}
\end{theorem}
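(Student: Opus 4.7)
The plan is to refine the diffeomorphism $\varphi$ from Theorem~\ref{thm:difftype} to one which intertwines the lifted $\wtU(p,q)$-action on $\widetilde{M}$ with a natural $\wtU(p,q)$-action on $N$, and then to classify the $\wtU(p,q)$-invariant pseudo-Riemannian metrics on $N$. In each case $N$ carries a distinguished $\wtU(p,q)$-action: in case~(1), via left translation on the $\wtSU(p,q)$-factor combined with an action of $Z(\wtU(p,q))_0$ on $N_1$; in case~(2), via the embedding $\wtU(p,q) \hookrightarrow \widetilde{S}$ coming from the symmetric pair, acting by left translation; and in case~(3), by the same left translation on $\widetilde{S}$ (which commutes with the right $K$-action defining the quotient) together with translation of the $\R$-factor by $Z(\wtU(p,q))_0$. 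The diffeomorphism in Theorem~\ref{thm:difftype} can be arranged to intertwine these natural actions with the given $\wtU(p,q)$-action on $\widetilde{M}$, either by inspecting its construction via orbit maps, or by post-composing with an automorphism of $N$. In either case, $\varphi^* g_{\widetilde{M}}$ is then a $\wtU(p,q)$-invariant pseudo-Riemannian metric on $N$.

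The core step is to describe the space of $\wtU(p,q)$-invariant symmetric $2$-tensors on $N$ in case~(2); cases~(1) and~(3) then follow by analogous but simpler arguments. A left-$\wtU(p,q)$-invariant metric on $\widetilde{S} = \wtSU(p,q+1)$ is determined by an $\operatorname{Ad}(\uni(p,q))$-invariant symmetric bilinear form on $\su(p,q+1)$. Using the Cartan decomposition $\su(p,q+1) = \uni(p,q) \oplus \mathfrak{m}$ of the symmetric pair, one verifies that the isotropy representation of $\uni(p,q)$ on $\mathfrak{m}$ is a single real irreducible module, inequivalent to any irreducible subrepresentation of the adjoint action of $\uni(p,q)$ on itself (a dimension count rules out equivalence with $\su(p,q)$, and the central $\R$ acts trivially on $\uni(p,q)$ but nontrivially on $\mathfrak{m}$). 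A Schur-type argument then forces every $\operatorname{Ad}(\uni(p,q))$-invariant form on $\su(p,q+1)$ to split as the orthogonal sum of an invariant form on $\uni(p,q)$ and a scalar multiple of $\Kill_{\su(p,q+1)}|_{\mathfrak{m} \times \mathfrak{m}}$. One replaces $g_{\widetilde{M}}$ by the pushforward of the bi-invariant metric on $\widetilde{S}$ induced by $\Kill_{\su(p,q+1)}$, normalised on $\mathfrak{m}$ to agree with the pullback; the result is still $\wtU(p,q)$-invariant and descends to $M$ because the deck group of $\widetilde{M} \to M$ sits inside the centralizer of the $\wtU(p,q)$-action in $\Iso(\widetilde{M})$.

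Case~(3) reduces to case~(2) once one observes that the $\R$-factor is generated by a central Killing vector field which, by $\operatorname{Ad}(\uni(p,q))$-invariance, is orthogonal to the $\widetilde{S}$-directions, while the right $K$-action preserves every bi-invariant metric on $\widetilde{S}$ and therefore the quotient inherits the product metric. Case~(1) is handled similarly: simplicity of $\su(p,q)$ forces the restriction of $\varphi^* g_{\widetilde{M}}$ to the $\wtSU(p,q)$-orbits to be a scalar multiple of the Killing form, and the Schur-type argument applied at a generic point of $N_1$ eliminates cross-terms between the two factors, producing the product structure.

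I expect the main obstacle to be the irreducibility and inequivalence statement for the $\uni(p,q)$-module $\mathfrak{m}$ in case~(2), which must be checked separately for both pairs $(\su(p,q+1), \uni(p,q))$ and $(\su(p+1,q), \uni(p,q))$ and is precisely where the hypotheses $n \geq 3$ and $(p,q) \neq (2,2)$ enter. A secondary but nontrivial point is the verification that the replacement bi-invariant metric descends from $\widetilde{M}$ to $M$; this rests on identifying the deck transformation group with a discrete subgroup of the centralizer of $\wtU(p,q)$ in $\Iso(\widetilde{M})$, a property that should follow from the structural analysis underlying Theorem~\ref{thm:difftype}.
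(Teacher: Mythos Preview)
Your core step contains a genuine gap. You write that ``a left-$\wtU(p,q)$-invariant metric on $\widetilde{S} = \wtSU(p,q+1)$ is determined by an $\operatorname{Ad}(\uni(p,q))$-invariant symmetric bilinear form on $\su(p,q+1)$,'' but this is false: $\wtU(p,q)$ does not act transitively on $\widetilde{S}$, so left-$\wtU(p,q)$-invariance only constrains the metric along each $\wtU(p,q)$-orbit and leaves it free to vary over the (positive-dimensional) orbit space $\wtU(p,q)\backslash\widetilde{S}$. Your Schur argument therefore classifies the metric at a single point, not globally, and the replacement you propose is not forced by the invariance you have assumed. The missing ingredient is that the pulled-back metric is actually \emph{right}-$\widetilde{S}$-invariant, since $\varphi$ is built as the orbit map of a right $\widetilde{S}$-action by Killing fields of the original metric; with this the metric is indeed determined at $e$ and your argument could be repaired.

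The paper sidesteps both this issue and your descent worry by working on $M$ rather than on $N$. It uses the $\wtU(p,q)$-invariant orthogonal decomposition $TM = T\OO \oplus \ZZ \oplus \YY$ (tangent to $\wtSU(p,q)$-orbits, tangent to $Z(\wtU(p,q))_0$-orbits, and their orthogonal complement) and defines the new metric as $\overline{h} = c_1 h|_{T\OO} \oplus c_2 h|_{\ZZ} \oplus c_3 h|_{\YY}$ for suitable constants $c_i$; this is automatically $\wtU(p,q)$-invariant and, being defined on $M$, requires no descent argument. The constants are chosen so that $d\varphi_e$ matches the Killing form of $\mS$, and here the Schur-type input (uniqueness up to scalar of the $\su(p,q)$-invariant form on each summand) is applied legitimately at the single point $e$. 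Since the right $\widetilde{S}$-action preserves both $h$ and the decomposition, it preserves $\overline{h}$, and then $\widetilde{S}$-equivariance of $\varphi$ plus completeness upgrade the local isometry at $e$ to a global one. Finally, the hypotheses $n\geq 3$ and $(p,q)\neq(2,2)$ are not used where you place them---$\mathfrak{m}\simeq\C^{p,q}_\R$ is an irreducible $\uni(p,q)$-module in all cases---but rather earlier, in establishing the trichotomy of Theorem~\ref{thm:difftype}.
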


Finally we prove that it is also possible to describe the $\wtU(p,q)$-action with respect to the diffeomorphism $\varphi$ from Theorem~\ref{thm:difftype}. Note that the canonical embeddings of $\uni(p,q)$ into either $\su(p,q+1)$ or $\su(p+1,q)$ induce corresponding homomorphisms of the Lie group $\wtU(p,q)$ into either $\wtSU(p,q+1)$ or $\wtSU(p+1,q)$, respectively. In what follows we will refer to either of these homomorphisms  as a canonical symmetric pair embedding of $\wtU(p,q)$ into the corresponding Lie group.

\begin{theorem}[Action type]
    \label{thm:actiontype}
    With the hypotheses and notation of Theorem~\ref{thm:difftype}, we can choose $\varphi$ so that it is $\wtU(p,q)$-equivariant for the following $\wtU(p,q)$-actions on $N$ corresponding to the cases stated in Theorem~\ref{thm:difftype}.
    \begin{enumerate}
        \item For $N = \wtSU(p,q) \times N_1$, the $\wtU(p,q)$-action is the product action of the left translation $\wtSU(p,q)$-action on the first factor and some $Z(\wtU(p,q))_0$-action on $N_1$.
        \item For $N$ either $\wtSU(p,q+1)$ or $\wtSU(p+1,q)$, the $\wtU(p,q)$-action is the left translation action obtained from the canonical symmetric pair embedding of $\wtU(p,q)$ into $\wtSU(p,q+1)$ or $\wtSU(p+1,q)$, respectively.
        \item For $N = K\backslash \widetilde{S} \times \R$, the $\wtU(p,q)$-action is a product action of a $\wtSU(p,q)$-action on $K \backslash \widetilde{S}$ and a $Z(\wtU(p,q))_0$-action on $\R$. The former is an isometric action of $\wtSU(p,q)$ on $K \backslash\widetilde{S}$ for some metric obtained from a bi-invariant metric on $\widetilde{S}$. The latter is a translation action defined by some isomorphism $Z(\wtU(p,q))_0 \simeq \R$.
    \end{enumerate}
\end{theorem}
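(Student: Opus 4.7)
The plan is to use Theorems~\ref{thm:difftype} and~\ref{thm:metrictype}, along with the residual freedom to compose $\varphi$ with an ambient isometry of $N$ (which preserves the conclusions of those theorems), and to identify the lifted action $\rho:\wtU(p,q)\to\Iso(N)_0$ through its derivative $\rho_*:\uni(p,q)\to\mfKill(N)$. Throughout, decompose $\uni(p,q) = \su(p,q) \oplus \mathfrak{z}$ with $\mathfrak{z}$ central and one-dimensional.

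Case~2 is the heart of the argument. For $N = \widetilde{S}$ a simple Lie group with its bi-invariant Killing metric, $\mfKill(N) \cong \mathfrak{s} \oplus \mathfrak{s}$ splits into left- and right-invariant Killing fields, so that $\rho_*|_{\su(p,q)} = (\psi_L,\psi_R)$ with each component either zero or an injection $\su(p,q)\hookrightarrow\mathfrak{s}$ (by simplicity of $\su(p,q)$). Combining the dense-orbit hypothesis for $\wtSU(p,q)$ on $M$ with the residual freedom afforded by left and right translations of $\widetilde{S}$ and by its outer inversion $x\mapsto x^{-1}$ (all isometries of the bi-invariant metric), one arranges $\psi_R = 0$, so the $\wtSU(p,q)$-action is by left translation through some $\Psi_L$. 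The image $\psi_L(\su(p,q))$ is a codimension-$(2n+1)$ subalgebra of $\mathfrak{s}$; classifying such embeddings up to an inner automorphism of $\mathfrak{s}$ (absorbable into $\varphi$ by precomposition with a left translation of $\widetilde{S}$) and the single outer automorphism of $\su(p,q+1)$ exchanging signatures (which accounts for the dichotomy between $\wtSU(p,q+1)$ and $\wtSU(p+1,q)$ in Theorem~\ref{thm:difftype}) identifies $\psi_L$ with the canonical embedding. Finally, $\rho_*(\mathfrak{z})$ lies in the centralizer of $\psi_L(\su(p,q))$ inside $\mathfrak{s}\oplus\mathfrak{s}$; this centralizer is precisely the one-dimensional central line of the canonically embedded $\uni(p,q) \subset \mathfrak{s}$, so the non-triviality of $Z(\wtU(p,q))_0$ identifies $\rho_*$ with the canonical symmetric-pair embedding, completing Case~2.

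For Cases~1 and~3, the product structure of the metric from Theorem~\ref{thm:metrictype} forces $\mfKill(N)$ to split as a direct sum, since the simple bi-invariant factor constitutes its own de~Rham summand of the simply connected $N$. Simplicity of $\su(p,q)$ then confines $\rho_*(\su(p,q))$ to the $\mfKill$-algebra of that simple factor. In Case~1 the simple factor is $\wtSU(p,q)$ itself, and the one-sidedness argument together with the inner nature of automorphisms of $\su(p,q)$ identifies $\rho_*(\su(p,q))$ with left translation; in Case~3 the argument of Case~2 applied to $\widetilde{S}$ yields the canonical embedding into $\widetilde{S}$ descending to $K\backslash\widetilde{S}$. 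The central part $\rho_*(\mathfrak{z})$ may act on both factors, but its component on the simple factor is central in the $\mfKill$-algebra of that factor and hence removable by composing $\varphi$ with a suitable translation. In Case~3, an additional use of $\Iso(\R)_0 = \R$ forces the $Z(\wtU(p,q))_0$-action on $\R$ to be by translations via some isomorphism $Z(\wtU(p,q))_0 \simeq \R$.

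The main obstacle is the one-sidedness step in Case~2: excluding twisted diagonal embeddings $(\psi_L,\psi_R)$ with both projections injective. Dimension bounds alone do not suffice, since even low-dimensional diagonal actions can have dense orbits in finite-volume quotients; the argument genuinely requires the dense-orbit hypothesis combined with Moore-type ergodicity for $\wtSU(p,q)$-actions on quotients of $\widetilde{S}$ and the bi-invariance of the metric. Once one-sidedness is established, the identification with the canonical symmetric-pair embedding reduces to a standard Lie-algebraic classification of codimension-$(2n+1)$ subalgebras of $\su(p,q+1)$ and centralizer computation.
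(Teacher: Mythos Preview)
Your approach treats $\varphi$ as a black-box isometry handed to you by Theorems~\ref{thm:difftype} and~\ref{thm:metrictype}, and then tries to pin down the transported $\wtU(p,q)$-action by analysing $\rho_*$ inside $\mfKill(N)$. The paper does something structurally different: it \emph{builds} $\varphi$ as the orbit map of a right $\widetilde{S}$-action (resp.\ $\widetilde{S}\times\R$-action) on $\widetilde{M}$ coming from the centralizer $\HH$ of the $\wtU(p,q)$-action in $\Kill(\widetilde{M})$. Because $\HH$ centralizes $\wtU(p,q)$ by construction, the left $\wtU(p,q)$-action and the right $\widetilde{S}$-action on $\widetilde{M}$ commute; transporting through $\varphi$, the right-translation component $\rho_2$ of $\rho$ must land in $Z(\widetilde{S})$ and is therefore trivial. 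This is how the paper gets one-sidedness for free, with no appeal to ergodicity or to a classification of twisted diagonal embeddings. The obstacle you flag as ``main'' simply does not arise.

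Your route also has a concrete error in Case~2. Once you have arranged $\psi_R=0$, you assert that the centralizer of $\psi_L(\su(p,q))$ in $\mathfrak{s}\oplus\mathfrak{s}$ is the one-dimensional line $Z(\uni(p,q))\subset\mathfrak{s}$. It is not: that centralizer is $Z_{\mathfrak{s}}(\psi_L(\su(p,q)))\oplus\mathfrak{s}$, which contains the entire right copy of $\mathfrak{s}$. So nothing in your argument forces $\rho_*(\mathfrak{z})$ to act by left translations, and the identification with the canonical symmetric-pair embedding does not follow. The same defect reappears in Cases~1 and~3: the component of $\rho_*(\mathfrak{z})$ on the simple factor need only commute with the left-translation action of $\su(p,q)$, which allows an arbitrary right-translation component that cannot be ``removed by composing with a translation''. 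The paper handles this in Case~1 by a geometric fact specific to $M$ (Proposition~\ref{prop:GH0-module-structure}(4) gives $Z^*_x\in T_x\OO^\perp$, so the center is tangent to the $N_1$ factor), and in Cases~2--3 again by the built-in commutation with the right $\widetilde{S}$-action.
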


As a consequence, we also have the following result that singles out a particularly interesting conclusion: that the $\wtU(p,q)$-space $M$ is in fact obtained from a symmetric pair, which is (locally) equivalent to either $(\SU(p,q+1), \U(p,q))$ or $(SU(p+1,q), \U(p,q))$. We recall that a pseudo-Riemannian manifold is weakly irreducible if the holonomy representation on the tangent space at some point has no proper non-degenerate invariant subspace. In particular, a weakly irreducible pseudo-Riemannian manifold cannot have a non-trivial product as universal covering space.

\begin{theorem}[Weakly irreducible case]
    \label{thm:irredcase}
    With the hypotheses and notation of Theorem~\ref{thm:difftype}, we further assume that $M$ is weakly irreducible. Then, for $N$ either $\wtSU(p,q+1)$ or $\wtSU(p+1,q)$ and for the left translation $\wtU(p,q)$-action on $N$ obtained from the canonical symmetric pair embedding $\wtU(p,q) \rightarrow N$, there exists a $\wtU(p,q)$-equivariant diffeomorphism $\varphi : N \rightarrow \widetilde{M}$. Furthermore, the metric on $M$ can be replaced by one which is still $\wtU(p,q)$-invariant and so that $\varphi$ is an isometry where $N$ carries a bi-invariant metric.
\end{theorem}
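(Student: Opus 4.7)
The plan is to derive Theorem~\ref{thm:irredcase} directly from Theorems~\ref{thm:difftype}, \ref{thm:metrictype}, and \ref{thm:actiontype}, using the weak irreducibility hypothesis to exclude cases (1) and (3), leaving only case (2).

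The first step is to apply the three preceding theorems to $M$, producing a $\wtU(p,q)$-equivariant diffeomorphism $\varphi : N \to \widetilde{M}$ together with a (possibly new) $\wtU(p,q)$-invariant metric on $M$ for which $\varphi$ is an isometry onto $N$ endowed with the geometry and action described in one of the three cases. It then suffices to show that cases (1) and (3) contradict the weak irreducibility hypothesis. In case (1), $N = \wtSU(p,q) \times N_1$ carries a product metric, and $\dim N_1 \geq 1$ because $Z(\wtU(p,q))_0$ acts non-trivially on $M$ but trivially on the $\wtSU(p,q)$-factor by Theorem~\ref{thm:actiontype}. In case (3), $N = K \backslash \widetilde{S} \times \R$ has an explicit $\R$-factor. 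So in both cases $\widetilde{M}$, when equipped with the replaced metric, is a non-trivial pseudo-Riemannian product.

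To convert this into a contradiction with the weak irreducibility of the \emph{original} metric, I would observe that the two complementary tangent distributions along the factors of $N$ are intrinsic to the $\wtU(p,q)$-action and not merely artifacts of the replaced metric: one is the tangent distribution to the $\wtSU(p,q)$-orbit foliation on $\widetilde{M}$, and the other is a transverse distribution characterized by the $Z(\wtU(p,q))_0$-action and the integrability coming from the product structure. These two distributions are therefore $\wtU(p,q)$-invariant for any $\wtU(p,q)$-invariant metric, including the original one. Using the dense $\wtSU(p,q)$-orbit together with the fact that on this orbit any $\wtU(p,q)$-invariant metric is pointwise controlled by an $\mathrm{Ad}$-invariant bilinear form on a suitable subspace of $\uni(p,q)$, one deduces that the two distributions are still orthogonal and non-degenerate for the original metric. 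This gives $\widetilde{M}$ a non-trivial pseudo-Riemannian product decomposition under the original metric as well, contradicting weak irreducibility.

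The main obstacle is precisely this last rigidity step, i.e.\ transferring the product decomposition from the replaced metric back to the original one via the dense orbit and $\mathrm{Ad}$-invariance. Once this step is in place, cases (1) and (3) are ruled out, so only case (2) can occur, and Theorems~\ref{thm:difftype}, \ref{thm:metrictype}, \ref{thm:actiontype} specialized to case (2) directly yield every conclusion of Theorem~\ref{thm:irredcase}: the equivariant diffeomorphism $\varphi : N \to \widetilde{M}$ with $N$ equal to $\wtSU(p,q+1)$ or $\wtSU(p+1,q)$, the left translation action via the canonical symmetric pair embedding, and the bi-invariant metric on $N$ after the final $\wtU(p,q)$-invariant replacement of the metric on $M$.
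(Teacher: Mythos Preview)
Your overall strategy matches the paper's exactly: apply Theorems~\ref{thm:difftype}--\ref{thm:actiontype} and use weak irreducibility to rule out cases (1) and (3). The paper's own proof is a single sentence and does not even mention the original-versus-replaced metric issue you raise, so you are being more careful than the paper here.

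That said, your proposed ``rigidity step'' is both harder and easier than you make it sound. It is easier because the factor distributions in cases (1) and (3) are \emph{defined} using the original metric $h$: in case~(1) they are $T\OO$ and $T\OO^\perp$, and in case~(3) they are $\ZZ$ and $T\OO\oplus\YY$ with $\YY=(T\OO\oplus\ZZ)^{\perp_h}$ (Lemma~\ref{lem:ZY-bundles}). So $h$-orthogonality and $h$-nondegeneracy are immediate from Lemma~\ref{lem:TOoplusTOperp} and Lemma~\ref{lem:ZY-bundles}; no dense-orbit or $\mathrm{Ad}$-invariance argument is needed for that. It is harder because orthogonal non-degenerate integrable distributions alone do \emph{not} give a product metric; you also need $h$ to be constant along each factor in the transverse direction. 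In case~(1) this is already the content of Proposition~\ref{prop:TOperp-int-GtimesN}, which produces an isometric splitting for the original metric. In case~(3) it follows because the $\R$-factor is the flow of the Killing field $Z_0$ for $h$: the flow preserves both $h$ and the splitting, so $h|_{T\OO\oplus\YY}$ is constant along $\R$, while $h(Z_0,Z_0)$ is constant on $\widetilde{M}$ by the dense-orbit argument in Proposition~\ref{prop:Zx}. Hence $(\widetilde{M},h)$ is itself a non-trivial pseudo-Riemannian product in both cases, contradicting weak irreducibility of $M$. With this in place your argument is complete and coincides with what the paper (tersely) intends.
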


As for the nature of the fundamental group $\pi_1(M)$ we can state the following consequence of the previous results. The description of $\pi_1(M)$ is particularly explicit in case (2) of the above theorems.

\begin{theorem}[Fundamental group]
    \label{thm:pi1(M)}
    With the hypotheses and notation of Theorem~\ref{thm:difftype}, the fundamental group $\pi_1(M)$ is isomorphic to a discrete subgroup in the group of isometries $\Iso(N)$ where $N$ is one of the pseudo-Riemannian manifolds described in the conclusions of Theorem~\ref{thm:metrictype}. If case (2) in Theorem~\ref{thm:metrictype} holds, then $\pi_1(M)$ has a finite index subgroup isomorphic to a discrete subgroup of $C \times \widetilde{S}$ where $\widetilde{S}$ is either $\wtSU(p,q+1)$ or $\wtSU(p+1,q)$ and $C$ is the centralizer of $\wtSU(p,q)$ in $\widetilde{S}$ with respect to the canonical symmetric pair embedding.
\end{theorem}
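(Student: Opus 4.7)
The plan is to realize $\pi_1(M)$ as a group of deck transformations of $\widetilde{M}$ acting by isometries and commuting with the lifted $\wtU(p,q)$-action, and then transport this picture along the isometric equivariant diffeomorphism $\varphi : N \to \widetilde{M}$ supplied by Theorems~\ref{thm:metrictype} and~\ref{thm:actiontype}.

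First I would lift the $\wtU(p,q)$-action from $M$ to $\widetilde{M}$; this lift exists uniquely since $\wtU(p,q)$ is simply connected. For any $\gamma \in \pi_1(M)$ and any $u \in \wtU(p,q)$, the maps $u\gamma$ and $\gamma u$ are both lifts of $u : M \to M$, so $u\gamma u^{-1}\gamma^{-1} \in \pi_1(M)$. Continuity of $u \mapsto u\gamma u^{-1}\gamma^{-1}$, together with $\wtU(p,q)$ connected and $\pi_1(M)$ discrete, forces this commutator to be trivial; hence $\pi_1(M)$ commutes with the lifted $\wtU(p,q)$-action. After pulling back the (possibly modified) $\wtU(p,q)$-invariant metric from $M$, the group $\pi_1(M)$ acts on $\widetilde{M}$ by isometries; transport along $\varphi$ identifies it with a subgroup of $\Iso(N)$, which is discrete because deck transformations act properly discontinuously. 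This proves the first assertion.

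For the second assertion we are in case~(2): $N = \widetilde{S}$ equipped with a bi-invariant Killing metric and left-translation $\wtU(p,q)$-action through a symmetric pair embedding $\iota : \wtU(p,q) \hookrightarrow \widetilde{S}$. Standard facts about simply connected simple Lie groups with the Killing metric yield $\Iso_{0}(\widetilde{S}) = (\widetilde{S} \times \widetilde{S}) / \Delta Z(\widetilde{S})$ acting by $[a,b]\cdot g = a g b^{-1}$, and $\Iso(\widetilde{S})/\Iso_{0}(\widetilde{S})$ is finite. The centralizer of the left $\iota(\wtU(p,q))$-action inside $\Iso_{0}(\widetilde{S})$ is $\{[a,b] : a \in C_{\widetilde{S}}(\iota(\wtU(p,q)))\}$, and since $\iota(Z(\wtU(p,q))_{0}) \subset C$ one checks that $C_{\widetilde{S}}(\iota(\wtU(p,q))) = C$, the centralizer of $\iota(\wtSU(p,q))$ in $\widetilde{S}$ appearing in the statement.

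Setting $\Gamma' := \pi_{1}(M) \cap \Iso_{0}(\widetilde{S})$, a finite-index subgroup of $\pi_{1}(M)$, we obtain a discrete subgroup $\Gamma' \subset (C \times \widetilde{S}) / \Delta Z(\widetilde{S})$. Pulling back along the covering $C \times \widetilde{S} \to (C \times \widetilde{S}) / \Delta Z(\widetilde{S})$ produces a discrete $\widehat{\Gamma} \subset C \times \widetilde{S}$ sitting in a central extension $1 \to Z(\widetilde{S}) \to \widehat{\Gamma} \to \Gamma' \to 1$. The main obstacle I anticipate is promoting this to the precise claim that a finite-index subgroup of $\pi_{1}(M)$ is itself isomorphic to a discrete subgroup of $C \times \widetilde{S}$. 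I would handle this by first passing to a torsion-free finite-index subgroup of $\pi_{1}(M)$ (Selberg's lemma applies since $\pi_{1}(M) \hookrightarrow \Iso(\widetilde{S})$ is linear) and then splitting the induced central $Z(\widetilde{S})$-extension on a further finite-index subgroup, exploiting that $Z(\widetilde{S})$ is finitely generated free abelian so that the $H^{2}$-obstruction can be killed after finite refinement.
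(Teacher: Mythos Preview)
Your approach coincides with the paper's up to and including the intersection $\Gamma' = \pi_1(M) \cap \Iso_0(\widetilde{S})$ and the identification of $\Iso_0(\widetilde{S})$ with $(\widetilde{S}\times\widetilde{S})/\Delta Z(\widetilde{S})$. The paper is in fact less careful than you at this point: it simply writes each $\gamma$ as $L_{\rho_1(\gamma)}R_{\rho_2(\gamma)^{-1}}$ for homomorphisms $\rho_1,\rho_2:\Gamma\to\widetilde{S}$, uses commutation with the left $\wtU(p,q)$-translations to force $\rho_1(\Gamma)\subset C$, and concludes $\Gamma\subset C\times\widetilde{S}$, without discussing the $\Delta Z(\widetilde{S})$ ambiguity you raise.

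Your proposed resolution of that ambiguity, however, does not work. The assertion that a central extension by a finitely generated free abelian group splits after passing to a finite-index subgroup is false in general: the integer Heisenberg extension $1\to\mathbb{Z}\to H_3(\mathbb{Z})\to\mathbb{Z}^2\to 1$ is the standard counterexample, since every finite-index subgroup of $H_3(\mathbb{Z})$ is again non-abelian two-step nilpotent and hence does not split over its center. So ``Selberg plus kill the $H^2$-obstruction on a finite-index subgroup'' is not a valid step as written. A smaller slip: $C_{\widetilde{S}}(\iota(\wtU(p,q)))$ equals $Z(\widetilde{S})$, not the one-dimensional group $C=C_{\widetilde{S}}(\iota(\wtSU(p,q)))$ of the statement, because the Lie-algebra centralizer of $\uni(p,q)$ inside $\su(p,q+1)$ or $\su(p+1,q)$ is trivial; fortunately only the inclusion $Z(\widetilde{S})\subset C$ is needed for your argument.
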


Some remarks are in order with respect to our assumptions. In \cite{BF} it was shown the existence of examples of $G$-spaces $M$ with an invariant finite ergodic volume that are not of the double coset type. However, these examples are constructed from double coset spaces by blowing up orbits that are then glued together (see also \cite{KatokLewis}). It was proved in \cite{BF} that such examples admit a $G$-invariant connection but only in the complement of the set where the surgery was applied: it does not seem possible to extend the connection everywhere. Such counterexamples to the conjecture in Zimmer's program thus have a sort of incompleteness property in a geometric sense.

On the other hand, smooth non-analytic geometric structures have a degree of freedom that may produce a similar incompleteness behavior. This is already observed in the Riemannian case considered in \cite{Nomizu}, where it is proved that only for analytic Riemannian manifolds there is a ``uniform'' (i.e.~not depending on the point) way to extend infinitesimal Killing vector fields to local ones. This sort of ``non-uniform'' behavior of the infinitesimal symmetries is also observed in the examples from \cite{BF}.

Hence, it is natural to look among complete analytic geometries for a positive answer and explicit realization of Zimmer's program. This is the reason for us to consider the study of actions on complete analytic pseudo-Riemannian manifolds.

The organization of the work is the following. In Sections~\ref{sec:Kill-G-actions} and \ref{sec:centralizerHH} we present some of the basic tools on Killing vector fields used in the following sections. The results in these sections apply to a very general connected Lie group $G$. It is even allowed of $G$ to have non-trivial center. Section~\ref{sec:Kill-Upq-actions} specializes to the case of $\wtU(p,q)$-actions developing useful properties for the centralizer of the $\wtU(p,q)$-action in the Lie algebra of Killing vector fields of $\widetilde{M}$. Section~\ref{sec:proofmainthms} completes the proofs of the results stated in this Introduction. Finally, an Appendix provides some representation theory results needed in the other sections.

\section{Killing fields of isometric actions}
\label{sec:Kill-G-actions}
For a pseudo-Riemannian manifold $N$ we denote by $\Kill(N)$ the Lie algebra of globally defined Killing vector fields on $N$. Also, we will denote by $\Kill_0(N,x)$ the Lie subalgebra of $\Kill(N)$ consisting of those vector fields that vanish at $x$.

We start by stating the following simple result which provides a representation of $\Kill_0(N,x)$ on $T_xN$.

\begin{lemma}
    \label{lem:lambda}
    Let $N$ be a pseudo-Riemannian manifold and $x \in N$. Then, the map $\lambda_x : \Kill_0(N,x) \rightarrow \so(T_x N)$ given by $\lambda_x(Z)(v) = [Z,V]_x$, where $V$ is any vector field such that $V_x = v$, is a well defined homomorphism of Lie algebras.
\end{lemma}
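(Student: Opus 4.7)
My plan is to verify the three assertions packaged in the statement: that $\lambda_x(Z)$ is a well-defined linear endomorphism of $T_xN$, that its image lies in the skew-symmetric endomorphisms $\so(T_xN)$, and that the assignment $Z \mapsto \lambda_x(Z)$ respects Lie brackets.

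For well-definedness, I would work in a local chart and use the coordinate expression $[Z,V]^i = Z^j \partial_j V^i - V^j \partial_j Z^i$; evaluated at $x$ where $Z_x = 0$, the first term drops out and we are left with $-V^j(x)\,\partial_j Z^i(x)$, which depends only on $v = V_x$ and the $1$-jet of $Z$ at $x$. Equivalently, writing $\nabla$ for the Levi--Civita connection, $[Z,V]_x = -\nabla_v Z$ whenever $Z_x = 0$, so the formula defines an element of $\Hom(T_xN, T_xN)$ independent of the choice of extension $V$ of $v$, and the linearity of $\lambda_x(Z)$ in $v$ is also clear.

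For the skew-symmetry, I would use the flow interpretation. Since $Z$ is Killing, its flow $\phi_t$ consists of local isometries, and since $Z_x = 0$, each $\phi_t$ fixes $x$, so $A_t := (d\phi_{-t})_x$ is a one-parameter subgroup of the pseudo-orthogonal group of $(T_xN, g_x)$; its infinitesimal generator $A'(0)$ therefore lies in $\so(T_xN)$. On the other hand, the Lie derivative computation
\[
[Z,V]_x = (\mathcal{L}_Z V)_x = \left.\frac{d}{dt}\right|_{t=0} (d\phi_{-t})_{\phi_t(x)}\, V_{\phi_t(x)} = \left.\frac{d}{dt}\right|_{t=0} A_t\, v = A'(0)\,v
\]
identifies $\lambda_x(Z)$ with $A'(0)$, and therefore places it in $\so(T_xN)$.

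For the Lie algebra homomorphism property I would apply the standard identity $[\mathcal{L}_{Z_1}, \mathcal{L}_{Z_2}] V = \mathcal{L}_{[Z_1,Z_2]} V$ to any vector field $V$ with $V_x = v$ and evaluate at $x$. Note first that $[Z_1,Z_2]_x = 0$ automatically when $Z_1$ and $Z_2$ both vanish at $x$, so $[Z_1,Z_2] \in \Kill_0(N,x)$ and $\lambda_x([Z_1,Z_2])$ is defined. Because $Z_1$ vanishes at $x$, the well-definedness argument above shows that $[Z_1, W]_x$ depends only on $W_x$ for any vector field $W$; applying this to $W = [Z_2,V]$ yields $[Z_1,[Z_2,V]]_x = \lambda_x(Z_1)\bigl(\lambda_x(Z_2)(v)\bigr)$, and symmetrically with the roles of $Z_1, Z_2$ swapped. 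Combining these with the Lie derivative identity gives the desired relation $\lambda_x([Z_1,Z_2])(v) = \lambda_x(Z_1)\lambda_x(Z_2)v - \lambda_x(Z_2)\lambda_x(Z_1)v$. The only remotely subtle point in the whole argument is the bookkeeping of signs in the flow computation for skew-symmetry; every other step reduces to routine manipulations of brackets and Lie derivatives, consistent with the lemma being labeled ``simple.''
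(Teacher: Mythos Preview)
Your proof is correct. The paper does not actually supply a proof of this lemma: it is introduced as a ``simple result'' and stated without argument, so there is nothing to compare against beyond confirming that your reasoning matches the standard verification. Each of your three steps (well-definedness via the vanishing of $Z$ at $x$, skew-symmetry via the isometric flow fixing $x$, and the bracket compatibility via the Jacobi identity together with the well-definedness already established) is sound and is exactly the kind of routine check the authors are implicitly leaving to the reader.
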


In the rest of this work $G$ will denote a simply connected Lie group with reductive Lie algebra. We will denote by $[G,G]$ and $Z(G)_0$ the connected subgroups whose Lie algebras are $\DD(\g) = [\g,\g]$ and $Z(\g)$, respectively, and we will assume that $[G,G]$ is non-compact and simple. We also assume that $G$ acts isometrically on a connected finite volume pseudo-Riemannian manifold $M$ so that there is a dense $[G,G]$-orbit and so that for the $Z(G)_0$-action there is at least one point in $M$ with discrete stabilizer; this last condition is equivalent to requiring that the $Z(G)_0$-action is locally free on some non-empty subset of $M$. We also consider the $G$-action on $\widetilde{M}$ lifted from the $G$-action on $M$. Finally, we will assume that $M$ and the $G$-action on $M$ are both analytic. Hence, the $Z(G)_0$-action is locally free on an open dense conull subset of $M$, which is in fact the complement of a proper analytic subset of $M$.

It is well known that the $[G,G]$-action on $M$ is everywhere locally free (see \cite{Szaro,Zeghib}). Hence, the set of $[G,G]$-orbits defines a foliation $\OO$ on $M$, whose tangent bundle will be denoted by $T\OO$. In particular, there is a natural isomorphism of vector bundles given by the expression
\begin{align*}
    M \times \DD(\g) &\rightarrow T\OO \\
    (x, X) &\mapsto X^*_x,
\end{align*}
For $X \in \g$ we denote by $X^*$ the vector field on $M$ whose local flow is $\exp(tX)$. Also, we will denote by $T\OO^\perp$ the bundle whose fibers are the subspaces orthogonal to the fibers of $T\OO$. In what follows, we will use the same symbols $\OO$, $T\OO$ and $T\OO^\perp$ for the corresponding objects on $\widetilde{M}$.

The following result is fundamental for our work. Its proof is based in Proposition~2.3 from
\cite{QuirogaCh} of which it is an extension (see also \cite{Gromov,ZimmerAut,OQ-SO}). Note that for $H$ a connected subgroup of $G$ we denote by $Z_{\X(\widetilde{M})}(H)$ the space of vector fields on $\widetilde{M}$ that centralize the $H$-action. In particular, the main addition of the next result with respect to Proposition~2.3 from \cite{QuirogaCh} is to ensure that the images of the homomorphisms $\rho_x$ below centralize the $Z(G)_0$-action.

For simplicity, from now on we will denote
\[
    \mfKill(\widetilde{M},Z(G)_0) = \Kill(\widetilde{M}) \cap Z_{\X(\widetilde{M})}(Z(G)_0),
\]
which is clearly a finite dimensional Lie algebra.

\begin{proposition}
    \label{prop:Dg(x)}
    Let $G$ and $M$ be as above. Then, there is a dense conull subset $A \subset \widetilde{M}$ such that for every $x \in A$ the $Z(G)_0$-action has discrete stabilizer at $x$ and the following properties are satisfied.
    \begin{enumerate}
    \item There is a homomorphism $\rho_x : \DD(\g) \rightarrow \mfKill(\widetilde{M},Z(G)_0)$ which is an isomorphism onto its image $\rho_x(\DD(\g))$.
    \item Every element of $\rho_x(\DD(\g))$ vanishes at $x$.
    \item For every $X \in \DD(\g)$ and $Y \in \g$  we have:
        $$
            [\rho_x(X),Y^*] = [X,Y]^* = -[X^*,Y^*].
        $$
        In particular, the elements in $\rho_x(\DD(\g))$ and their corresponding local flows preserve both $\OO$ and $T\OO^\perp$, and we also have
        \[
            [\rho_x(X),Y^*] = 0
        \]
        for every $X \in \DD(\g)$ and $Y \in Z(\g)$.
    \item The homomorphism of Lie algebras $\lambda_x\circ\rho_x : \DD(\g) \rightarrow \so(T_x \widetilde{M})$ induces a $\DD(\g)$-module structure on $T_x \widetilde{M}$ for which the subspaces $T_x \OO$ and $T_x \OO^\perp$ are $\DD(\g)$-submodules.
    \end{enumerate}
\end{proposition}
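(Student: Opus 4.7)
My plan is to build on Proposition~2.3 of \cite{QuirogaCh}, whose construction produces, on a dense conull set of $x \in \widetilde{M}$, a Killing vector field $W_X$ centralizing the $[G,G]$-action with $(W_X)_x = X^*_x$, and then defines $\rho_x(X) := W_X - X^*$. The novel content of the present statement is that the image of $\rho_x$ lies in $\mfKill(\widetilde{M}, Z(G)_0)$ and that the bracket identity in item~(3) holds for all $Y \in \g$ and not merely $Y \in \DD(\g)$. Both additions reduce to arranging for $W_X$ to centralize the entire group $G$ rather than just $[G,G]$.

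First I would intersect the conull set of \cite{QuirogaCh} with the preimage in $\widetilde{M}$ of the open dense conull subset of $M$ where $Z(G)_0$ has discrete stabilizer; by our standing hypotheses this locus is the complement of a proper analytic subset, so the resulting set $A$ is still dense and conull. On $A$ the original construction immediately yields items~(1), (2) and the identity in~(3) for $Y \in \DD(\g)$. Injectivity of $\rho_x$ follows because $\rho_x(X) = 0$ implies $W_X = X^*$, hence $X^*$ lies in the centralizer of $[G,G]$; then $[X,Y] = 0$ for every $Y \in \DD(\g)$, and simplicity of $\DD(\g)$ forces $X = 0$.

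The crux of the argument is the upgrade from centralizing $[G,G]$ to centralizing $G$. The finite-dimensional Lie algebra $\ZZ := \Kill(\widetilde{M}) \cap Z_{\X(\widetilde{M})}([G,G])$ carries a $Z(\g)$-action by derivations $W \mapsto [Y^*, W]$, and one seeks a representative $W'_X \in \ZZ$ with $(W'_X)_x = X^*_x$ lying in the zero weight subspace $\ZZ \cap Z_{\X(\widetilde{M})}(Z(G)_0)$. The proposed tactic is to decompose $\ZZ$ into generalized $Z(\g)$-weight spaces and to invoke the finite-volume hypothesis together with Poincar\'e recurrence for the isometric $Z(G)_0$-flow on $\widetilde{M}$ (projected to $M$) to rule out non-zero weight contributions to the evaluation $(W_X)_x = X^*_x$, thereby forcing the zero-weight component of $W_X$ itself to satisfy the required normalization. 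With $W'_X$ so chosen, the computation
\[
    [\rho_x(X), Y^*] = [W'_X, Y^*] - [X^*, Y^*] = 0 + [X,Y]^*
\]
is then valid for every $Y \in \g$, which supplies both the centralizer condition on $\rho_x(\DD(\g))$ embedded in item~(1) and the full bracket identity of item~(3). The remaining assertions are formal: the $\DD(\g)$-invariance of $T\OO$ follows because $[\rho_x(X), Y^*] = [X,Y]^* \in T\OO$ for $Y \in \DD(\g)$, and since $\rho_x(X)$ is Killing, its local flow is isometric and hence also preserves $T\OO^\perp$, yielding item~(3) in full; item~(4) is then immediate from Lemma~\ref{lem:lambda}, with the invariance of $T_x\OO$ and $T_x\OO^\perp$ inherited from the distribution-level invariance. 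I expect the zero-weight reduction to be the main obstacle, and it is there that the analyticity and finite-volume hypotheses enter most critically.
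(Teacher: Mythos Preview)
Your approach differs substantively from the paper's and, as written, has a genuine gap.

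\textbf{What the paper does.} Rather than post-processing the output of \cite{QuirogaCh}, the paper re-runs the entire Gromov centralizer machinery against an \emph{enriched} geometric structure. On the open conull set $W$ where $Z(G)_0$ is locally free, one fixes a basis of $Z(\g)$, obtaining a parallelism $\omega$ of the subbundle tangent to $Z(G)_0$-orbits; one then combines $\omega$ with the metric structure $\omega'$ into a single rigid, analytic, unimodular, $G$-invariant geometric structure $\sigma$. The Killing fields of $\sigma$ are precisely $\Kill(\widetilde{W}) \cap Z_{\X(\widetilde{W})}(Z(G)_0)$. The infinitesimal-to-local-to-global extension argument of \cite{CQGromov,CQParallelisms,Amores}, applied to $\sigma$, produces the homomorphism $\rho_x$ directly with image in $\mfKill(\widetilde{M},Z(G)_0)$. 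The homomorphism property is never in jeopardy because it is built into the Gromov construction from the start.

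\textbf{The gap in your approach.} Suppose you succeed in showing that the non-zero-weight components $W''_X$ of $W_X$ vanish at $x$, so that $W'_X := W_X - W''_X$ still satisfies $(W'_X)_x = X^*_x$. You then set $\rho'_x(X) := W'_X - X^*$. For this to be a Lie algebra homomorphism you need $[W'_X, W'_Y] = W'_{[X,Y]}$. But projection onto the zero generalized weight space is not a Lie algebra homomorphism: since $[\ZZ_\lambda, \ZZ_\mu] \subset \ZZ_{\lambda+\mu}$, the zero-weight part of $[W_X, W_Y]$ picks up contributions $[W''_X{}^{(\lambda)}, W''_Y{}^{(-\lambda)}]$ from opposite non-zero weights, so in general $[W'_X, W'_Y] \neq W'_{[X,Y]}$. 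Nothing in your sketch controls these cross-terms, and simplicity of $\DD(\g)$ does not by itself force them to vanish. Averaging over $Z(G)_0$ is unavailable (it is non-compact), and averaging homomorphisms does not yield homomorphisms anyway.

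\textbf{A secondary issue.} Your recurrence argument is aimed at showing the non-zero-weight components vanish \emph{at $x$}, but even that is delicate: recurrence on $M$ lifts to $\widetilde{M}$ only up to deck transformations, which themselves act on $\Kill(\widetilde{M})$, and the indefinite metric gives no norm to control. More importantly, recurrence at best rules out non-zero \emph{real parts} of weights; it says nothing about purely imaginary weights or about the nilpotent part of the $Z(\g)$-action, so the strict centralizer (which is what you need) may still be smaller than the generalized zero-weight space.

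The paper's device of enriching the geometric structure sidesteps all of this: it never leaves the category of homomorphisms, and the centralizing condition is encoded in the structure $\sigma$ rather than imposed afterwards.
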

\begin{proof}
    The statement of Proposition~2.3 from \cite{QuirogaCh} applied to the Lie group $[G,G]$ is precisely our statement except for the claims that for every $X \in \DD(\g)$ we have $\rho_x(X) \in Z_{\X(\widetilde{M})}(Z(G))_0$ as well as $[\rho_x(X),Y^*] = 0$ for all $Y \in Z(\g)$, which are in fact equivalent conditions. We will explain how to modify the arguments used in \cite{QuirogaCh} to achieve this additional property. For this we note that the existence of the required vector fields is first established infinitesimally, in the sense of jets, and this is followed by local and global extensions. We will use the notation from \cite{QuirogaCh} and we refer to this work and \cite{CQGromov} for the basic facts on jets and geometric structures. We will also use the results from \cite{CQParallelisms}. In what follows we will denote by the same symbol a geometric structure when lifted from $M$ to $\widetilde{M}$.

    Let $W$ be the subset of $M$ where the $Z(G)_0$-action is locally free. Hence $W$ is an open dense conull subset of $M$. In particular, the $Z(G)_0$-action defines a vector subbundle of $TW$ and, after fixing a base of $Z(\g)$, a parallelism of such subbundle. The latter in turn defines an order $1$ geometric structure $\omega$ on $W$. Let $\widetilde{W}$ be the inverse image of $W$ in $\widetilde{M}$ with respect to the covering map $\widetilde{M} \rightarrow M$. Then, the Lie algebra of global Killing vector fields of $\omega$ on $\widetilde{W}$ is given precisely by $\Kill(\widetilde{W}, \omega) = Z_{\X(\widetilde{W})}(Z(G)_0)$, and similar identities hold for local and infinitesimal Killing fields. We also observe that since the $G$-action and the $Z(G)_0$-action centralize each other, the group $G$ preserves both $W$ and the geometric structure $\omega$.

    If we denote by $\omega'$ the geometric structure defined by the pseudo-Riemannian metric on $M$, then the map
    \begin{align*}
        \sigma : L(W) &\rightarrow Q_1 \times Q_2\\
            \sigma(\alpha) &= (\omega(\alpha), \omega'(\alpha)),
    \end{align*}
    defines an order $1$ geometric structure on $W$, where $Q_1$ and $Q_2$ are suitably chosen. Note that we have adopted the notation where a geometric structure is an equivariant map defined on a frame bundle into a suitable manifold acted upon by the structure group of the frame bundle. With this definition, it is clear that the Killing fields of $\sigma$ are the Killing fields for both $\omega'$ (and so of the pseudo-Riemannian metric) and $\omega$, and this holds for infinitesimal, local and global symmetries; in other words, we have
    \[
        \Kill(\widetilde{W}, \sigma) = \Kill(\widetilde{W}) \cap \Kill(\widetilde{W}, \omega)
            = \Kill(\widetilde{W}) \cap Z_{\X(\widetilde{W})}(Z(G)_0)
            = \mfKill(\widetilde{W},Z(G)_0)
    \]
    as well as the corresponding identities for local and infinitesimal Killing fields. In particular, the rigidity of the pseudo-Riemannian metric implies that of $\sigma$. We observe that $\sigma$ is analytic and of algebraic type since both $\omega$ and $\omega'$ have such properties, and $\sigma$ is unimodular because of the volume form associated to the pseudo-Riemannian metric. We also note that $\sigma$ is $G$-invariant as a consequence of the $G$-invariance of the pseudo-Riemannian metric and of $\omega$.

    The first part of the proof of Proposition~2.3 from \cite{QuirogaCh} applied to the connected non-compact simple Lie group $[G,G]$ corresponds to the infinitesimal existence and it is based on the proof of Lemma~9.1 from \cite{CQGromov}. Since the latter holds for a general $[G,G]$-invariant analytic unimodular rigid geometric structure of algebraic type, we conclude from \cite{QuirogaCh,CQGromov} the existence of a dense conull subset $A \subset \widetilde{W}$ such that for every $x \in A$ and for every $k \geq 1$ there exists an injective homomorphism
    \[
        \rho_x^k : \DD(\g) \rightarrow \Kill_0^k(\widetilde{W}, \sigma, x),
    \]
    such that
    \[
        [\rho_x^k(X), j^{k-1}_x(Y^*)] = j^{k-1}_x([X,Y]^*)
    \]
    for every $X,Y \in \DD(\g)$, where $\Kill_0^k(\widetilde{W}, \sigma, x)$ denotes the vector space of infinitesimal Killing vector fields of order $k$ at $x$ that vanish at $x$. Note that the brackets in the left of the last identity are obtained by taking the jet of order $k-1$ of the brackets of representative vector fields; this is well defined by the remarks found in \cite{CQGromov}.

    For the local extension argument, by Theorem~3 from \cite{CQParallelisms} for every $x \in \widetilde{W}$ there exists $k(x) \geq 1$ such that the map
    \begin{align*}
        J^k_x : \Kill_0^{loc}(\widetilde{W}, \sigma, x) &\rightarrow \Kill_0^k(\widetilde{W}, \sigma, x)  \\
            X &\mapsto j^k_x(X),
    \end{align*}
    is an isomorphism for every $k \geq k(x)$. Here, $\Kill_0^{loc}(\widetilde{W}, \sigma, x)$ denotes the space of Killing vector fields of $\sigma$ defined in a neighborhood of $x$ that vanish at $x$. We conclude that for $x \in A$ and a fixed $k \geq k(x)$ the map
    \[
        \rho^{loc}_x = (J^k_x)^{-1} \circ \rho^k_x :
        \DD(\g) \rightarrow \Kill_0^{loc}(\widetilde{W}, \sigma, x)
    \]
    is an injective Lie algebra homomorphism such that
    \[
        j^{k-1}_x([\rho^{loc}_x(X), Y^*]) = j^{k-1}_x([X,Y]^*),
    \]
    for every $X, Y \in \DD(\g)$. If we choose $k \geq 2$, then the $1$-rigidity and the analyticity of the pseudo-Riemannian metric imply that
    \[
        [\rho^{loc}_x(X), Y^*] = [X,Y]^*
    \]
    in a neighborhood of $x$ for every $X, Y \in \DD(\g)$.

    Finally, for the global extension, we recall that the results from \cite{Amores} imply that the restriction map
    \[
        J^{loc}_x : \Kill_0(\widetilde{M}, x) \rightarrow \Kill^{loc}_0(\widetilde{M}, x)
    \]
    is an isomorphism for every $x \in \widetilde{M}$. This induces for every $x \in A$ an injective homomorphism of Lie algebras
    \[
        \rho_x = (J^{loc}_x)^{-1} \circ \rho_x^{loc} : \DD(\g) \rightarrow \Kill_0(\widetilde{M}, x)
    \]
    that satisfies (3) from our theorem's statement for $X, Y \in \DD(\g)$ as well as the following property.
    \begin{itemize}
        \item For every $X \in \DD(\g)$, the restriction of $\rho_x(X)$ to a neighborhood of $x$ belongs to $\Kill^{loc}(\widetilde{W}, \omega, x)$.
    \end{itemize}
    In other words, for every $X \in \DD(\g)$ we have
    \[
        [\rho_x(X), Y^*] = 0
    \]
    for every $Y \in Z(\g)$ in a neighborhood of $x$. By the analyticity of the objects involved and the connectedness of $\widetilde{M}$ such vanishing holds everywhere thus proving (3) in full generality. It also implies that we have $\rho_x(X) \in Z_{\X(\widetilde{M})}(Z(G)_0)$ for every $X \in \DD(\g)$, which completes the proof of (1).

    We note that $A$ is a dense conull subset in $\widetilde{M}$ since $\widetilde{W}$ is open dense conull in $\widetilde{M}$. The rest of the claims are now easily obtained.
\end{proof}

It remains to determine whether or not the $Z(G)_0$-action on $M$ is locally free everywhere. The next result provides a simple criterion that will be useful latter on. In the rest of this work, we will denote for $x \in M$
\[
    \ZZ_x = \{ Z^*_x : Z \in Z(\g) \},
\]
which is the tangent space at $x$ to the $Z(G)_0$-orbit of $x$. When considering the $Z(G)_0$-action on $\widetilde{M}$ we will use the same notation.

\begin{proposition}
    \label{prop:Zx}
    Let $G$ and $M$ be as above. Assume that at some point of $M$ the $Z(G)_0$-action is locally free with some non-degenerate orbit as a submanifold of the pseudo-Riemannian manifold $M$. Then, the $Z(G)_0$-action on $M$ is locally free everywhere with non-degenerate orbits and the same holds for the $Z(G)_0$-action on $\widetilde{M}$ as well. In particular, the subset
    \[
        \ZZ = \bigcup_{x \in \widetilde{M}} \ZZ_x
    \]
    is the tangent bundle to the foliation by $Z(G)_0$-orbits and so it is an analytic subbundle of $T\widetilde{M}$.
\end{proposition}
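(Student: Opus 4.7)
The plan is to reduce the global statement to an analytic identity along $[G,G]$-orbits, exploiting the fact that $Z(\g)$ centralizes $\DD(\g)$ and that there is a dense $[G,G]$-orbit.

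Fix a basis $Z_1,\dots,Z_k$ of $Z(\g)$, with $k = \dim Z(G)_0$, and consider the analytic function
\[
    F : M \rightarrow \R, \qquad F(x) = \det\bigl( g_x(Z_i^*,Z_j^*) \bigr)_{i,j=1}^{k},
\]
where $g$ is the pseudo-Riemannian metric on $M$. The crucial observation is that $F$ is constant on $[G,G]$-orbits: first, $[G,G]$ acts by isometries; second, since $Z(\g)$ centralizes $\DD(\g)$, every vector field $Z_i^*$ is $[G,G]$-invariant. Hence each inner product $g_x(Z_i^*,Z_j^*)$ is $[G,G]$-invariant, and so is $F$. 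Because there is a dense $[G,G]$-orbit in $M$, $F$ is constant on a dense subset; by continuity it is constant on $M$.

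Now let $x_0 \in M$ be the point supplied by the hypothesis: the $Z(G)_0$-action is locally free at $x_0$ (so $Z_1^*,\dots,Z_k^*$ are linearly independent there) and the orbit through $x_0$ is non-degenerate (so the restriction of $g_{x_0}$ to $\ZZ_{x_0} = \operatorname{span}(Z_i^*(x_0))$ is non-degenerate). Together these give $F(x_0) \neq 0$, hence $F \equiv F(x_0) \neq 0$ on all of $M$. At every $x \in M$, $F(x) \neq 0$ forces the vectors $Z_1^*(x),\dots,Z_k^*(x)$ to be linearly independent (so the $Z(G)_0$-action is locally free at $x$) and to span a non-degenerate subspace of $T_xM$ (so the $Z(G)_0$-orbit through $x$ is a non-degenerate submanifold). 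The same conclusion transfers to $\widetilde M$ because the covering map $\widetilde M \to M$ is a $Z(G)_0$-equivariant local isometry, and the lifted vector fields are precisely the pullbacks of the $Z_i^*$.

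For the final claim, the globally defined analytic vector fields $Z_1^*,\dots,Z_k^*$ on $\widetilde M$ are linearly independent at every point by the above, and at each $x \in \widetilde M$ they span exactly $\ZZ_x$. Hence $\ZZ = \bigcup_x \ZZ_x$ is a rank-$k$ analytic subbundle of $T\widetilde M$, namely the image of the injective analytic vector bundle morphism $\widetilde M \times Z(\g) \to T\widetilde M$, $(x,\sum a_i Z_i) \mapsto \sum a_i Z_i^*(x)$, and by construction $\ZZ$ coincides with the tangent distribution of the foliation by $Z(G)_0$-orbits.

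The only step requiring care is the density/continuity argument showing $F$ is a constant: one must use that the $[G,G]$-invariance of $F$ is genuine (not just along integral curves of $\DD(\g)$-generated fields), which follows from the fact that $Z_i^*$ commutes with every element of $\DD(\g)^*$, equivalently that the flows of $Z_i^*$ and of $\DD(\g)$ commute, equivalently that $[G,G]$ and $Z(G)_0$ commute inside $G$. This is the main conceptual hinge; everything else is routine.
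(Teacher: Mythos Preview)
Your proof is correct and follows essentially the same approach as the paper: both define the Gram determinant $F(x)=\det\bigl(g_x(Z_i^*,Z_j^*)\bigr)$, observe that it is $[G,G]$-invariant because the metric and each $Z_i^*$ are, conclude $F$ is constant from the dense orbit, and use $F(x_0)\neq 0$ to get linear independence and non-degeneracy everywhere. You add a little extra detail on the transfer to $\widetilde{M}$ and the subbundle structure, but the argument is the same.
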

\begin{proof}
    Let $Z_1, \dots, Z_k$ be a basis for $Z(\g)$, and let $Z_1^*, \dots, Z_k^*$ be the corresponding Killing fields on $M$. Consider the analytic function
    \begin{align*}
        F : M &\rightarrow \R \\
        F(x) &= \det\left(\left(h_x(Z_r^*(x),Z_s^*(x))\right)_{r,s=1}^k\right),
    \end{align*}
    where $h$ is the pseudo-Riemannian metric of $M$. Since $h$ and every vector field $Z_j^*$ are $[G,G]$-invariant, the function $F$ is $[G,G]$-invariant as well. We conclude that $F$ is constant since $M$ has a dense $[G,G]$-orbit. On the other hand, by hypothesis $F(x_0) \not= 0$ at some point $x_0$ and so $F(x) = F(x_0) \not= 0$ at every $x \in M$. In particular, the $Z(G)_0$-orbits are non-degenerate at every point. It also implies that the vector fields $Z_1^*, \dots, Z_k^*$ are linearly independent everywhere.
\end{proof}

\section{The centralizer of isometric actions}
\label{sec:centralizerHH}
We continue considering $G$ and $M$ satisfying the conditions given in Section~\ref{sec:Kill-G-actions}. Also, from now on $\HH$ will denote the centralizer in $\mfKill(\widetilde{M}, Z(G)_0)$ of the $[G,G]$-action on $\widetilde{M}$. Note that the inclusion $\HH \subset \mfKill(\widetilde{M}, Z(G)_0)$ implies that $\HH$ centralizes the $Z(G)_0$-action as well. In this section we present some basic structure theory for this centralizer $\HH$.

The following local homogeneity result is well known and it is a particular case of Gromov's open dense orbit theorem. For its proof we refer to \cite{Gromov} and \cite{ZimmerAut} (see also \cite{An,Melnick}).
\begin{proposition}
    \label{prop:GromovOpenDense}
    For $G$ and $M$ satisfying the above conditions, there is an open dense conull subset $U \subset \widetilde{M}$ such that for every $x \in U$ the evaluation map
    \begin{align*}
        ev_x : \HH &\rightarrow T_x \widetilde{M} \\
            Z &\mapsto Z_x
    \end{align*}
    is surjective.
\end{proposition}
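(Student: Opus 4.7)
The strategy is to deduce the proposition from Gromov's open dense orbit theorem applied to the rigid, analytic, unimodular, $G$-invariant geometric structure $\sigma$ of algebraic type introduced in the proof of Proposition~\ref{prop:Dg(x)}, and then to promote the resulting local transitivity of the full Killing algebra of $\sigma$ to local transitivity of its $[G,G]$-centralizer $\HH$.

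First, recall that on the open dense conull subset $\widetilde{W} \subseteq \widetilde{M}$ where the $Z(G)_0$-action is locally free, the local Killing fields of $\sigma$ are precisely those local vector fields that are simultaneously Killing for the metric and centralize the $Z(G)_0$-action. By Amores' extension theorem and the simple connectedness of $\widetilde{M}$, each such local Killing field extends to a global element of $\mathcal{L} := \mfKill(\widetilde{M}, Z(G)_0)$, which is finite-dimensional by the rigidity of $\sigma$. Since $[G,G]$ acts on $\widetilde{M}$ by automorphisms of $\sigma$ and has a dense orbit, Gromov's open dense orbit theorem (as used in \cite{Gromov,ZimmerAut,Melnick}) provides an open dense conull subset $U_0 \subseteq \widetilde{W}$ on which the evaluation $\mathcal{L} \to T_x\widetilde{M}$ is already surjective.

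Next, for the tangent direction to the $G$-orbit I would invoke Proposition~\ref{prop:Dg(x)} directly: for every $X \in \DD(\g)$ and $x \in A$, Proposition~\ref{prop:Dg(x)}(3) yields
\[
    [X^* + \rho_x(X),\, Y^*] = [X^*,Y^*] + [X,Y]^* = 0 \quad \text{for every } Y \in \g,
\]
so that $X^* + \rho_x(X) \in \HH$. Since $\rho_x(X)$ vanishes at $x$, evaluation there gives $X^*_x$; combined with the fields $Z^* \in \HH$ for $Z \in Z(\g)$, this shows $T_x\OO + \ZZ_x \subseteq ev_x(\HH)$ throughout $A$.

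The main obstacle is the remaining normal direction to the $G$-orbit. For this I would exploit the pushforward $[G,G]$-action on the finite-dimensional space $\mathcal{L}$. Since $[G,G]$ is semisimple, this representation is completely reducible and splits as $\mathcal{L} = \HH \oplus \mathcal{L}'$ with $\mathcal{L}'$ the sum of non-trivial isotypic components. Using the equivariance $ev_{gx}(g_*V) = dg|_x \cdot ev_x(V)$ along the dense $[G,G]$-orbit together with the $\DD(\g)$-submodule structure of $T_x\OO + \ZZ_x$ inside $T_x\widetilde{M}$ provided by Proposition~\ref{prop:Dg(x)}(4), one argues that $ev_x(\mathcal{L}')$ is forced into $T_x\OO + \ZZ_x$ on a dense open $U \subseteq U_0 \cap A$; combined with the Gromov surjectivity on $\mathcal{L}$ and the inclusion $T_x\OO + \ZZ_x \subseteq ev_x(\HH)$, this forces $ev_x(\HH) = T_x\widetilde{M}$ on $U$. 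This last step, analyzing the non-trivial isotypic components and ruling out new normal contributions, is the delicate core of the Gromov/Zimmer centralizer-in-automorphisms machinery and is where essentially all of the proof's work lies.
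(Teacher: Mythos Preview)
Your proposal is more complicated than necessary and its crucial step is not established. The paper's proof is a one-line citation: the version of Gromov's theorem recorded in \cite{Gromov} and \cite{ZimmerAut} is already a \emph{centralizer} theorem. Applied to the $[G,G]$-invariant structure $\sigma$ on $\widetilde{W}$, it directly yields an open dense conull subset on which the $[G,G]$-centralizer inside $\Kill(\widetilde{W},\sigma) = \mfKill(\widetilde{W},Z(G)_0)$ evaluates onto the tangent space; nothing further is needed. Your first two paragraphs (open-dense transitivity of the full Killing algebra $\mathcal{L}$, and the verification that $T_x\OO + \ZZ_x \subset ev_x(\HH)$ via $\rho_x(X)+X^*$) are correct but superfluous once one cites the centralizer form of the theorem.

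The real issue is your third paragraph, where you attempt to pass from surjectivity of $ev_x|_{\mathcal{L}}$ to surjectivity of $ev_x|_{\HH}$. The equivariance $ev_{gx}(g_*V)=dg|_x\cdot ev_x(V)$ relates evaluations at \emph{different} points $x$ and $gx$; it does not equip a single $T_x\widetilde{M}$ with a $\DD(\g)$-module structure for which $ev_x$ is a module map with respect to the pushforward decomposition $\mathcal{L}=\HH\oplus\mathcal{L}'$. Consequently the assertion that $ev_x(\mathcal{L}')\subset T_x\OO+\ZZ_x$ does not follow from what you have written, and in fact this inclusion is not the mechanism behind the centralizer theorem. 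The standard argument instead uses the representation $\rho_x$ of Proposition~\ref{prop:Dg(x)}: since each $\rho_x(X)$ vanishes at $x$, the bracket action $V\mapsto[\rho_x(X),V]$ makes $ev_x$ a genuine $\DD(\g)$-module homomorphism at the fixed point $x$, and it is this pointwise equivariance (not the global pushforward action) that allows one to match isotypic pieces of $\mathcal{L}$ with those of $T_x\widetilde{M}$. You are essentially trying to reprove Gromov's centralizer theorem, and the sketch you give is not the correct route; rather than completing it, you should simply invoke the theorem as the paper does.
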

\begin{proof}
    To conclude this result we observe that, with the notation of the proof of Proposition~\ref{prop:Dg(x)}, we have $\mfKill(\widetilde{W}, Z(G)_0) = \Kill(\widetilde{W},\sigma)$. Hence, it is enough to apply the results from \cite{Gromov} or \cite{ZimmerAut} to the geometric structure $\sigma$ preserved by $[G,G]$ to obtain an open dense conull subset $U \subset \widetilde{W}$ that satisfies the conclusions. For this we also note that $U$ is open dense conull in $\widetilde{M}$ since $\widetilde{W}$ is so in $\widetilde{M}$.
\end{proof}

The following result is similar to Lemma~1.7 from \cite{OQ-SO}.

\begin{lemma}
    \label{lem:G(x)}
    For $G$ and $M$ as above, let $A$ be given as in Proposition~\ref{prop:Dg(x)}. Then, for every $x\in A$ and for $\rho_x$ as in Proposition~\ref{prop:Dg(x)}, the map
    \begin{align*}
        \widehat{\rho}_x : \g &\rightarrow \mfKill(\widetilde{M}, Z(G)_0) \\
        \widehat{\rho}_x(X + Y) &= \rho_x(X) + X^* + Y^*,
    \end{align*}
    where $X \in \DD(\g)$ and $Y \in Z(\g)$, is an injective homomorphism of Lie algebras whose image $\GG(x)$ lies in $\HH$. In particular, $\GG(x) \simeq \g$ as Lie algebras, Furthermore, the following properties hold.
    \begin{enumerate}
        \item The Lie brackets of $\HH$ turn it into a $\GG(x)$-module.
        \item If we denote by $\ZZ\GG(x)$ the center of $\GG(x)$, then we have
            \[
                \ZZ\GG(x) = \widehat{\rho}_x(Z(\g)) = \{ Z^* : Z \in Z(\g) \}
            \]
            and $[\HH, \ZZ\GG(x)] = 0$. In particular, for every $Z \in Z(\g)$ the subspace $\R Z^*$ is a trivial $\GG(x)$-submodule of $\HH$.
    \end{enumerate}
\end{lemma}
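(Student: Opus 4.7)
The plan is to verify each claim by combining the bracket identities of Proposition~\ref{prop:Dg(x)} with the reductive decomposition $\g = \DD(\g) \oplus Z(\g)$. The homomorphism property reduces to a direct expansion. For $X_i + Y_i \in \g$ with $X_i \in \DD(\g)$ and $Y_i \in Z(\g)$, I would expand $[\widehat{\rho}_x(X_1+Y_1), \widehat{\rho}_x(X_2+Y_2)]$ into nine terms. All brackets involving some $Y_i^*$ vanish, either because $[\rho_x(X), Z^*] = 0$ for $Z \in Z(\g)$ by Proposition~\ref{prop:Dg(x)}(3) or because $[V^*, Z^*] = -[V,Z]^* = 0$ whenever $Z \in Z(\g)$. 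The surviving pieces are $[\rho_x(X_1), \rho_x(X_2)] = \rho_x([X_1,X_2])$ together with $[\rho_x(X_1), X_2^*] + [X_1^*, \rho_x(X_2)] + [X_1^*, X_2^*]$, which collapses via Proposition~\ref{prop:Dg(x)}(3) to $[X_1,X_2]^* + [X_1,X_2]^* - [X_1,X_2]^* = [X_1,X_2]^*$. Since $[X_1+Y_1, X_2+Y_2] = [X_1,X_2] \in \DD(\g)$, the result matches $\widehat{\rho}_x([X_1,X_2]) = \rho_x([X_1,X_2]) + [X_1,X_2]^*$.

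For the image, I would check $\widehat{\rho}_x(X+Y) \in \HH$ directly. Each of $\rho_x(X)$, $X^*$ and $Y^*$ is a Killing field commuting with every $Z^*$, $Z \in Z(\g)$, so their sum lies in $\mfKill(\widetilde{M}, Z(G)_0)$. To check that it centralizes the $[G,G]$-action, for $Z \in \DD(\g)$ the bracket $[\widehat{\rho}_x(X+Y), Z^*]$ expands as $[X,Z]^* - [X,Z]^* - [Y,Z]^*$, and the last term vanishes because $Y \in Z(\g)$.

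For injectivity, since $\widehat{\rho}_x$ is a Lie algebra homomorphism, $\ker \widehat{\rho}_x$ is an ideal of $\g$. Because $\DD(\g)$ is simple and $Z(\g)$ is central, every ideal of $\g$ has the form $\mathfrak{a} \oplus \mathfrak{b}$ with $\mathfrak{a} \in \{0, \DD(\g)\}$ and $\mathfrak{b} \subseteq Z(\g)$. If $\mathfrak{a} = \DD(\g)$, then for any $X \in \DD(\g)$ one would have $\rho_x(X) + X^* = 0$ globally; evaluating at $x$ and using $\rho_x(X)_x = 0$ from Proposition~\ref{prop:Dg(x)}(2) yields $X^*_x = 0$, contradicting the local freeness of the $[G,G]$-action. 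Thus $\ker \widehat{\rho}_x \subseteq Z(\g)$. For any $Y$ in this kernel, $\widehat{\rho}_x(Y) = Y^*$ is identically zero, so in particular $Y^*_x = 0$, and local freeness of the $Z(G)_0$-action at $x \in A$ then forces $Y = 0$. Hence $\ker \widehat{\rho}_x = 0$ and $\widehat{\rho}_x$ induces the isomorphism $\g \simeq \GG(x)$.

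Part (1) is then immediate: $\HH$ is a Lie subalgebra (it is the centralizer of a set) and $\GG(x) \subseteq \HH$, so the adjoint action of $\GG(x)$ preserves $\HH$. For (2), the isomorphism $\widehat{\rho}_x$ carries $Z(\g)$ onto $\ZZ\GG(x)$, and the identity $\widehat{\rho}_x(Y) = Y^*$ for $Y \in Z(\g)$ yields the description $\ZZ\GG(x) = \{Z^* : Z \in Z(\g)\}$. The identity $[\HH, \ZZ\GG(x)] = 0$ follows from $\HH \subseteq Z_{\X(\widetilde{M})}(Z(G)_0)$ together with the fact that the fields $Z^*$, $Z \in Z(\g)$, generate the $Z(G)_0$-action, which immediately gives that each $\R Z^*$ is a trivial $\GG(x)$-submodule of $\HH$. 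The main obstacle in this proof is the injectivity step: the presence of the central summand $Z(\g)$ means one cannot argue pointwise at $x$ in the style of Lemma~1.7 of \cite{OQ-SO}, and the classification of ideals of $\g$ is what separates the central from the semisimple direction.
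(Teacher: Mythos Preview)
Your proof is correct and follows essentially the same route as the paper: the homomorphism check, the verification that $\GG(x)\subset\HH$, and parts (1)--(2) are handled identically via the bracket identities of Proposition~\ref{prop:Dg(x)} and the inclusion $\HH\subset Z_{\X(\widetilde{M})}(Z(G)_0)$. The only minor difference is in the injectivity step: you invoke the classification of ideals of $\g=\DD(\g)\oplus Z(\g)$ to separate the semisimple and central directions, whereas the paper first treats $X\in\DD(\g)$ by evaluation at $x$ and then, for a general element $X+Y$ in the kernel, computes $[\widehat{\rho}_x(X),\widehat{\rho}_x(Z)]=-[Y^*,\widehat{\rho}_x(Z)]=0$ for all $Z\in\DD(\g)$ to force $\widehat{\rho}_x(X)$ into the center of the simple algebra $\widehat{\rho}_x(\DD(\g))$; both arguments are equally short and rely on the same local-freeness inputs.
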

\begin{proof}
    The map $\widehat{\rho}_x$ is clearly linear with values lying in $\mfKill(\widetilde{M}, Z(G)_0)$ since both $\rho_x(X)$ and $Y^*$ preserve the metric on $\widetilde{M}$ and commute with the $Z(G)_0$-action, for $X \in \DD(\g)$ and $Y \in \g$. To prove that it is a homomorphism of Lie algebras, given $X_1, X_2 \in \DD(\g)$ and $Y_1, Y_2 \in Z(\g)$ we observe that from Proposition~\ref{prop:Dg(x)} we obtain
    \begin{align*}
        [\widehat{\rho}_x(X_1 + Y_1), \widehat{\rho}_x(X_2 + Y_2)]
            =& [\rho_x(X_1) + X_1^* + Y_1^*, \rho_x(X_2) + X_2^* + Y_2^*] \\
            =& [\rho_x(X_1) + X_1^*, \rho_x(X_2) + X_2^*] \\
            =& [\rho_x(X_1),\rho_x(X_2)] + [\rho_x(X_1), X_2^*] \\
                &+ [X_1^*, \rho_x(X_2)] + [X_1^*, X_2^*] \\
            =& \rho_x([X_1, X_2]) + [X_1, X_2]^* \\
                &+ [X_1, X_2]^* - [X_1, X_2]^* \\
            =& \rho_x([X_1, X_2]) + [X_1, X_2]^* \\
            =& \widehat{\rho}_x([X_1, X_2]) \\
            =& \widehat{\rho}_x([X_1 + Y_1, X_2 + Y_2]).
    \end{align*}
    If we assume that $\widehat{\rho}_x(X) = 0$ for some $X \in \DD(\g)$, then evaluation at $x$ yields $X^*_x = 0$. Since the $[G, G]$-action is locally free this implies that $X = 0$. In particular, the Lie algebra $\widehat{\rho}_x(\DD(\g))$ is isomorphic to $\DD(\g)$ and hence simple. Now suppose that for some $X \in \DD(\g)$ and $Y \in Z(\g)$ we have
    \[
        \widehat{\rho}_x(X) + Y^* = 0.
    \]
    Then, for every $Z \in \DD(\g)$ we have
    \[
        [\widehat{\rho}_x(X), \widehat{\rho}_x(Z)] = -[Y^*, \rho_x(Z) + Z^*] = [Z, Y]^* + [Y, Z]^* = 0,
    \]
    which shows that $\widehat{\rho}_x(X)$ lies in the center of $\widehat{\rho}_x(\DD(\g))$ thus implying that $X = 0$. This yields $Y^* = 0$ and so $Y = 0$ since the $Z(G)_0$-action is locally free on a non-empty subset. Hence, the map $\widehat{\rho}_x$ is injective.

    To prove that $\GG(x) \subset \HH$, we observe that for $X,Y \in \DD(\g)$ and $Z \in Z(\g)$ we have
    \begin{align*}
        [\widehat{\rho}_x(X + Z), Y^*]
            &= [\rho_x(X) + X^* + Z^*, Y^*] \\
            &= [X, Y]^* + [X^*, Y^*] + [Z^*, Y^*] \\
            &= [Y, Z]^* = 0.
    \end{align*}

    On the other hand, by the properties of $\widehat{\rho}_x$ proved so far we clearly have
    \[
        \ZZ\GG(x) = \widehat{\rho}_x(Z(\g)) = \{ Z^* : Z \in Z(\g) \}.
    \]
    Since $\HH \subset Z_{\X(\widetilde{M})}(Z(G)_0)$, we have that $[\HH, Z^*] = 0$ for every $Z \in Z(\g)$. This completes the proof of (2).
\end{proof}

In the rest of this work we will denote by $\DD\GG(x)$ the derived subalgebra and by $\ZZ\GG(x)$ the center of $\GG(x)$, respectively.

Next we show that Proposition~\ref{prop:GromovOpenDense} allows us to define a $\GG(x)$-module structure on $T_x\widetilde{M}$. Furthermore, we prove that the natural evaluation map intertwines the $\GG(x)$-module structures on $\HH$ and $T_x\widetilde{M}$. Note that by Lemma~\ref{lem:G(x)} the map $\widehat{\rho}_x$ provides a particular realization of the isomorphism $\GG(x) \simeq \g$. The latter allows us to describe the isomorphism types of $\GG(x)$-modules in terms of known $\g$-modules. We will make use of this in the rest of the work.

\begin{lemma}
    \label{lem:TxM-module-structure}
    For $G$ and $M$ as above, let $A$ and $U$ be as in Propositions~\ref{prop:Dg(x)} and \ref{prop:GromovOpenDense}, respectively. Fix some point $x \in A \cap U$. Then, the following properties hold.
    \begin{enumerate}
        \item The map $\lambda_x : \GG(x) \rightarrow \so(T_x \widetilde{M})$ given by $\lambda_x(Z)(v) = [Z,V]_x$, where $V \in \HH$ is such that $V_x = v$, is a well defined homomorphism of Lie algebras.
        \item The evaluation map $ev_x : \HH \rightarrow T_x \widetilde{M}$ is a homomorphism of $\GG(x)$-modules, and it satisfies $ev_x(\DD\GG(x)) = T_x \OO$. In particular, $T_x\OO$ is a $\GG(x)$-module isomorphic to the $\g$-module $\DD(\g)$.
        \item The subspace $T_x\OO^\perp$ is a $\GG(x)$-submodule of $T_x\widetilde{M}$.
  \end{enumerate}
\end{lemma}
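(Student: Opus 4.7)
The plan is to exploit the decomposition $\widehat{\rho}_x(X + Y) = \rho_x(X) + X^* + Y^*$ from Lemma~\ref{lem:G(x)} to reduce all three assertions to facts about the vanishing part $\rho_x(X)$, so that Lemma~\ref{lem:lambda} and Proposition~\ref{prop:Dg(x)} can be applied directly.

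For (1), the delicate point is well-definedness, since elements of $\GG(x)$ need not vanish at $x$, and so Lemma~\ref{lem:lambda} does not apply to them literally. Fix $Z = \rho_x(X) + X^* + Y^* \in \GG(x)$ with $X \in \DD(\g)$, $Y \in Z(\g)$, and let $V \in \HH$ with $V_x = v$. Since $V$ centralizes both the $[G,G]$- and $Z(G)_0$-actions, $[X^*, V] = 0$ and $[Y^*, V] = 0$, hence
\[
    [Z, V]_x = [\rho_x(X), V]_x.
\]
But $\rho_x(X)$ is a Killing field vanishing at $x$ by Proposition~\ref{prop:Dg(x)}(2), so by Lemma~\ref{lem:lambda} the right-hand side depends only on $V_x = v$ and defines an element of $\so(T_x \widetilde{M})$. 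This simultaneously proves well-definedness, shows that $\lambda_x$ lands in $\so(T_x \widetilde{M})$, and exhibits $\lambda_x$ as the composition
\[
    \GG(x) \;\xrightarrow{\widehat{\rho}_x^{-1}}\; \g \;\xrightarrow{\pi}\; \DD(\g) \;\xrightarrow{\rho_x}\; \rho_x(\DD(\g)) \;\xrightarrow{\lambda_x^{\mathrm{old}}}\; \so(T_x\widetilde{M})
\]
of Lie algebra homomorphisms, where $\pi$ is the projection with kernel $Z(\g)$ and $\lambda_x^{\mathrm{old}}$ is the map from Lemma~\ref{lem:lambda}. In particular, $\lambda_x$ is itself a Lie algebra homomorphism.

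For (2), the module-homomorphism property $ev_x([Z, V]) = \lambda_x(Z)(ev_x(V))$ is immediate upon choosing $V$ itself as the extension of $V_x$ in the definition of $\lambda_x$, noting that $[Z, V] \in \HH$ because $\HH$ is a Lie subalgebra. For the image statement, $\DD\GG(x) = \widehat{\rho}_x(\DD(\g))$ consists of the fields $\rho_x(X) + X^*$ with $X \in \DD(\g)$; evaluating at $x$ and using $\rho_x(X)_x = 0$ yields $ev_x(\DD\GG(x)) = \{X^*_x : X \in \DD(\g)\} = T_x\OO$. The restriction of $ev_x \circ \widehat{\rho}_x$ to $\DD(\g)$ is the linear map $X \mapsto X^*_x$, which is injective since the $[G,G]$-action is locally free; the computation in (1), combined with $\lambda_x(\widehat{\rho}_x(X+Y))(X'{}^*_x) = [\rho_x(X), \widehat{\rho}_x(X')]_x = [X, X']^*_x$, shows that this identification intertwines the adjoint $\g$-action on $\DD(\g)$ with the $\GG(x)$-action on $T_x\OO$ transported through $\widehat{\rho}_x$.

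For (3), the factorization established in (1) shows that the $\GG(x)$-module structure on $T_x\widetilde{M}$ coincides, up to the identifications $\GG(x) \simeq \g$ and $\pi : \g \to \DD(\g)$, with the $\DD(\g)$-module structure of Proposition~\ref{prop:Dg(x)}(4); hence $T_x\OO^\perp$ is automatically a $\GG(x)$-submodule. Equivalently, since $\lambda_x(\GG(x)) \subset \so(T_x\widetilde{M})$ by (1) and $T_x\OO$ is $\GG(x)$-invariant by (2), its metric orthogonal is also invariant. The main obstacle throughout the argument is the one addressed in (1): $\GG(x)$ contains vectors which do not vanish at $x$, and the fix is to use that $\HH$ centralizes the entire $G$-action so that only the $\rho_x$-component contributes to the bracket at $x$.
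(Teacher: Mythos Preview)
Your proof is correct and follows essentially the same approach as the paper: both reduce to the observation that for $Z = \rho_x(X) + X^* + Y^*$ and $V \in \HH$ one has $[Z,V]_x = [\rho_x(X),V]_x$, so that Lemma~\ref{lem:lambda} applies to the $\rho_x$-component. Your packaging via the explicit factorization $\lambda_x = \lambda_x^{\mathrm{old}} \circ \rho_x \circ \pi \circ \widehat{\rho}_x^{-1}$ is slightly more streamlined than the paper's, which verifies the homomorphism property via the Jacobi identity and checks the $\so$-condition by a separate direct computation, but the underlying argument is the same.
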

\begin{proof}
    By the choice of $x$, for every $v \in T_x\widetilde{M}$ there exists $V \in \HH$ such that $V_x = v$. If $Z \in \GG(x)$ is given, then there are some $X \in \DD(\g)$ and $Y \in Z(\g)$ such that $Z = \rho_x(X) + X^* + Y^*$. With these choices we have
    \[
        [Z,V] = [\rho_x(X) + X^* +Y^*, V] = [\rho_x(X),V],
    \]
    where the second identity follows from the fact that $V$ centralizes the $[G,G]$-action as well as $Y^*$ by Lemma~\ref{lem:G(x)}(2). Since $\rho_x(X)$ vanishes at $x$, this  shows that the dependence of $[Z,V]_x$ on $V$ is only on $V_x = v$. In particular, the map $\lambda_x$ given above is well  defined. That $\lambda_x$ is a homomorphism of Lie algebras into $\gl(T_x\widetilde{M})$ follows from the Jacobi identity and the fact that $\HH$ is a $\GG(x)$-module.

    Next, for $h$ the metric of $\widetilde{M}$ and for every $X \in \DD(\g)$, we have
    \[
        h_x([\rho_x(X),V]_x, V'_x) + h_x(V_x, [\rho_x(X),V']_x) = 0,
    \]
    for every pair of vector fields $V, V' \in \HH$. This is a consequence of the fact that $\rho_x(X)$ is a Killing vector field that vanishes at $x$. Hence, for $V, V' \in \HH$, $X \in \DD(\g)$ and $Y \in Z(\g)$, the previous computations show that
    \[
        h_x([\rho_x(X) + X^* + Y^*,V]_x, V'_x) + h_x(V_x, [\rho_x(X) + X^* + Y^*,V']_x) = 0,
    \]
    thus proving that for every $Z \in \GG(x)$ and every $v, v' \in T_x(\widetilde{M})$ we have
    \[
        h_x(\lambda_x(Z)(v), v') + h_x(v, \lambda_x(Z)(v')) = 0.
    \]
    We conclude that $\lambda_x(Z) \in \so(T_x\widetilde{M})$ for every $Z \in \GG(x)$, thus completing the proof of (1).

    On the other hand, from the definitions involved, it is clear that $ev_x$ is homomorphism of $\GG(x)$-modules and that $ev_x(\DD\GG(x)) = T_x\OO$. That $T_x \OO$ is isomorphic to $\DD(\g)$ as $\g$-module is a consequence of the above expressions and of Proposition~\ref{prop:Dg(x)}(3). This yields (2). Finally, that $T_x\OO^\perp$ is a $\GG(x)$-submodule now follows from (1) and (2).
\end{proof}

For $x \in A\cap U$, in the rest of this work we consider $\HH$ and $T_x\widetilde{M}$ endowed with the $\GG(x)$-module structures defined in Lemmas~\ref{lem:G(x)} and \ref{lem:TxM-module-structure}, respectively. We now introduce a Lie subalgebra of $\HH$ that is very useful to study the structure of $\HH$.

\begin{lemma}
    \label{lem:H0(x)}
    For $G$ and $M$ above, let $A$ and $U$ be as in Propositions~\ref{prop:Dg(x)} and \ref{prop:GromovOpenDense}, respectively. Fix some point $x \in A \cap U$. Then, the subspace $\HH_0(x) = \ker(ev_x)$ is a Lie subalgebra of both $\HH$ and $\Kill_0(\widetilde{M},x)$, as well as a $\GG(x)$-submodule of $\HH$. Furthermore, the sum $\GG(x) + \HH_0(x)$ is direct and it is a Lie subalgebra of $\HH$ that contains $\HH_0(x)$ as an ideal. In particular, $\HH$ is a $\GG(x) + \HH_0(x)$-module.
\end{lemma}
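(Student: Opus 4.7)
The plan is to split the lemma into four assertions and handle them in sequence: (a) $\HH_0(x)$ is closed under the Lie bracket (hence a subalgebra of both $\HH$ and $\Kill_0(\widetilde{M},x)$); (b) $\HH_0(x)$ is stable under the $\GG(x)$-action on $\HH$; (c) $\GG(x) \cap \HH_0(x) = 0$; (d) the sum $\GG(x) + \HH_0(x)$ is a Lie subalgebra of $\HH$ with $\HH_0(x)$ as an ideal, and $\HH$ becomes a module over it.

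For (a), the elementary identity that $[V,W]_x = 0$ whenever $V_x = W_x = 0$ immediately gives closure of $\HH_0(x)$ under bracket inside $\HH$, and the inclusion $\HH_0(x) \subset \Kill_0(\widetilde{M},x)$ is inherited from $\HH \subset \Kill(\widetilde{M})$. For (b), given $V \in \HH_0(x)$ and $Z = \rho_x(X) + X^* + Y^* \in \GG(x)$, I would first reduce $[Z, V]$ to $[\rho_x(X), V]$, using that $V \in \HH$ centralizes both $[G,G]$ and $Z(G)_0$ and hence kills the contributions of $X^*$ and $Y^*$. Lemma~\ref{lem:lambda} applied to the Killing field $\rho_x(X) \in \Kill_0(\widetilde{M},x)$ then shows $[\rho_x(X), V]_x$ depends only on $V_x = 0$, so $[Z,V]_x = 0$. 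Membership in $\HH$ is already guaranteed by Lemma~\ref{lem:G(x)}(1), giving $[Z,V] \in \HH_0(x)$.

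I expect the directness in (c) to be the main obstacle, and the proposed approach is to iterate (b) in order to bring in the simplicity of $[G,G]$. Suppose $Z = \widehat{\rho}_x(X + Y) \in \GG(x) \cap \HH_0(x)$. Bracketing with an arbitrary $\widehat{\rho}_x(W)$, $W \in \DD(\g)$, produces an element that lies in $\HH_0(x)$ by (b) and, since $\widehat{\rho}_x$ is a Lie homomorphism and $[W,Y] = 0$, equals $\widehat{\rho}_x([W,X])$, which also lies in $\GG(x)$. Evaluating at $x$ gives $[W,X]^*_x = 0$, and the local freeness of the $[G,G]$-action forces $[W,X] = 0$. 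Since this holds for every $W \in \DD(\g)$ and $[G,G]$ is simple, its center is trivial, so $X = 0$. Then $Z = Y^*$ with $Y^*_x = 0$, and the local freeness of the $Z(G)_0$-action at $x \in A$ yields $Y = 0$.

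Finally, (d) follows by collecting three bracket relations: $[\GG(x), \GG(x)] \subset \GG(x)$ because $\GG(x)$ is a subalgebra, $[\GG(x), \HH_0(x)] \subset \HH_0(x)$ by (b), and $[\HH_0(x), \HH_0(x)] \subset \HH_0(x)$ by (a). These together show that $\GG(x) + \HH_0(x)$ is a Lie subalgebra of $\HH$ with $\HH_0(x)$ as an ideal. The $\GG(x) + \HH_0(x)$-module structure on $\HH$ is automatic: $\HH$ is already a $\GG(x)$-module by Lemma~\ref{lem:G(x)}(1), and since $\HH_0(x) \subset \HH$, it acts on $\HH$ through the ambient adjoint action.
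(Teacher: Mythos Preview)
Your proof is correct and follows essentially the same outline as the paper's. The only noteworthy difference is in part~(c): the paper argues that $X^*_x + Y^*_x = 0$ forces $ev_x(\R Y^*) \subset T_x\OO$, and since $\R Y^*$ is a trivial $\GG(x)$-submodule while $T_x\OO \simeq \DD(\g)$ (by Lemma~\ref{lem:TxM-module-structure}(2)) has none, both $X^*_x$ and $Y^*_x$ must vanish separately. Your bracketing argument accomplishes the same thing by a direct computation that avoids invoking the module structure on $T_x\OO$; it is a slightly more self-contained route to the same conclusion, using simplicity of $\DD(\g)$ in the form ``trivial center'' rather than ``no trivial submodules in the adjoint representation.'' For~(b) you likewise reprove by hand what the paper obtains by citing that $ev_x$ is a $\GG(x)$-module map.
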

\begin{proof}
    In fact, we have $\HH_0(x) = \HH \cap \Kill_0(\widetilde{M},x)$, which implies that it is a Lie subalgebra of $\HH$. That $\HH_0(x)$ is a $\GG(x)$-submodule follows from the fact that $ev_x$ is a homomorphism of $\GG(x)$-modules.

    Next assume that $Z = \rho_x(X) + X^* + Y^* \in \GG(x)$ vanishes at $x$, where $X \in \DD(\g)$ and $Y \in Z(\g)$. Hence, we have $X^*_x + Y^*_x = ev_x(Z) = 0$ which implies that the inclusion
    \[
        ev_x(\R Y^*) = ev_x(\R X^*) \subset ev_x(\DD\GG(x)) = T_x \OO
    \]
    realizes $ev_x(\R Y^*)$ as a $\GG(x)$-submodule of $T_x \OO$. By Lemma~\ref{lem:TxM-module-structure}(2) the latter is a $\GG(x)$-module isomorphic to $\DD(\g)$ as a $\g$-module, and so we have $ev_x(\R Y^*) = ev_x(\R X^*) = 0$. The local freeness at $x$ of the $[G,G]$ and $Z(G)_0$-actions now imply that $X = Y = 0$. Hence, we conclude that $\GG(x) \cap \HH_0(x) = 0$ and the sum $\GG(x) + \HH_0(x)$ is direct.

    The rest of the statement follows directly from the properties proved so far.
\end{proof}

The constructions considered up to this point yield the following module structure over $\GG(x) + \HH_0(x)$ together with some useful properties.

\begin{proposition}
    \label{prop:GH0-module-structure}
    For $G$ and $M$ as above, let $A$ and $U$ be as in Propositions~\ref{prop:Dg(x)} and \ref{prop:GromovOpenDense}, respectively. For a fixed point $x \in A \cap U$, let $\GG(x)$
    and $\HH_0(x)$ be the Lie subalgebras of $\HH$ defined in Lemmas~\ref{lem:G(x)} and \ref{lem:H0(x)}, respectively. Consider the map defined by
    \begin{align*}
        \lambda_x : \GG(x) + \HH_0(x) &\rightarrow \so(T_x\widetilde{M}) \\
        \lambda_x(Z)(v) &= [Z,V]_x,
    \end{align*}
    where for a given $v \in T_x\widetilde{M}$ we choose $V \in \HH$ such that $V_x = v$. Then, the following properties are satisfied.
    \begin{enumerate}
        \item The map $\lambda_x$ is a well defined homomorphism of Lie algebras. In particular, $T_x\widetilde{M}$ is a $\GG(x) + \HH_0(x)$-module.
        \item The evaluation map $ev_x : \HH \rightarrow T_x\widetilde{M}$ is a homomorphism of $\GG(x) + \HH_0(x)$-modules for the module structures on $\HH$ and    $T_x\widetilde{M}$ defined by Lemma~\ref{lem:H0(x)} and (1), respectively.
        \item The subspaces $T_x\OO$ and $T_x\OO^\perp$ are $\GG(x) + \HH_0(x)$-submodules of $T_x\widetilde{M}$.
        \item If $T_x\OO \cap T_x\OO^\perp = 0$, then we have
            \[
                \ZZ_x = ev_x(\ZZ\GG(x)) \subset T_x \OO^\perp.
            \]
    \end{enumerate}
\end{proposition}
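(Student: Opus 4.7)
My plan is to address the four items in order, leveraging the inclusion $\GG(x)+\HH_0(x) \subset \HH$ (so that $[Z,V] \in \HH$ whenever $Z \in \GG(x)+\HH_0(x)$ and $V \in \HH$) and the partial results in Lemmas~\ref{lem:lambda}, \ref{lem:G(x)}, \ref{lem:TxM-module-structure}, and \ref{lem:H0(x)}.

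For (1), I will first note that $\lambda_x$ is already well defined on $\GG(x)$ by Lemma~\ref{lem:TxM-module-structure}(1) and on $\HH_0(x)$ by Lemma~\ref{lem:lambda} (since $\HH_0(x) \subset \Kill_0(\widetilde{M},x)$), so the given formula extends linearly to the direct sum. The Lie algebra property $\lambda_x([Z,W]) = [\lambda_x(Z),\lambda_x(W)]$ will follow from the Jacobi identity once I observe that $[W,V] \in \HH$ allows me to rewrite $\lambda_x(Z)(\lambda_x(W)(v)) = [Z,[W,V]]_x$. To obtain $\lambda_x(Z) \in \so(T_x\widetilde{M})$, I will decompose a generic $Z$ as $\rho_x(X)+X^*+Y^*+Z_0$ and use that any $V \in \HH \subset \mfKill(\widetilde{M},Z(G)_0)$ centralizes both $X^*$ and $Y^*$; hence $[Z,V] = [\rho_x(X)+Z_0,V]$, where $\rho_x(X)+Z_0$ is a Killing field vanishing at $x$, and the standard Killing identity delivers the required skew-symmetry with respect to $h_x$.

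Item (2) will be essentially a tautology after (1): the identity $ev_x([Z,V]) = [Z,V]_x = \lambda_x(Z)(V_x) = \lambda_x(Z)(ev_x(V))$ is built into the very definition of $\lambda_x$. For (3), Lemma~\ref{lem:TxM-module-structure} already gives $\GG(x)$-invariance of both $T_x\OO$ and $T_x\OO^\perp$, so only $\HH_0(x)$ remains. Every $Z \in \HH_0(x) \subset \HH$ centralizes the $[G,G]$-action, so $[Z,X^*]=0$ for $X \in \DD(\g)$, which forces $\lambda_x(Z)$ to annihilate $T_x\OO$; preservation of $T_x\OO^\perp$ is then automatic from the skew-symmetry of $\lambda_x(Z)$ with respect to $h_x$.

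The step I expect to be the main obstacle is (4). Assuming $T_x\OO \cap T_x\OO^\perp = 0$, I obtain the $\GG(x)$-module decomposition $T_x\widetilde{M} = T_x\OO \oplus T_x\OO^\perp$. By Lemma~\ref{lem:G(x)}(2), $\ZZ\GG(x)$ is a trivial $\GG(x)$-submodule of $\HH$, and (2) transfers this triviality to $\ZZ_x = ev_x(\ZZ\GG(x))$ inside $T_x\widetilde{M}$. Decomposing any $v \in \ZZ_x$ along the direct sum above, the projection onto $T_x\OO$ must itself be $\GG(x)$-fixed. The crux is then to check that $T_x\OO$ contains no non-zero $\GG(x)$-fixed vectors: using the isomorphism $T_x\OO \simeq \DD(\g)$ of $\GG(x) \simeq \g$-modules supplied by Lemma~\ref{lem:TxM-module-structure}(2), this reduces to the observation that $\g$ acts on $\DD(\g)$ via the adjoint representation (with $Z(\g)$ acting trivially), and the space of $\DD(\g)$-invariants in $\DD(\g)$ equals $Z(\DD(\g))=0$ by simplicity of $\DD(\g)$. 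This forces the $T_x\OO$-component of $v$ to vanish, yielding $\ZZ_x \subset T_x\OO^\perp$.
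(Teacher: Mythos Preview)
Your arguments for (1)--(3) are correct and follow the paper's approach essentially verbatim: the paper likewise reduces (1) to the proof of Lemma~\ref{lem:TxM-module-structure}, obtains (2) as a tautology, and for (3) shows that $\HH_0(x)$ actually annihilates $T_x\OO$ (the paper phrases this as $\lambda_x(\HH_0(x))(T_x\OO)=0$ via $T_x\OO = ev_x(\DD\GG(x))$ and $[\HH_0(x),\GG(x)]\subset\HH_0(x)$, while you use $[Z,X^*]=0$ directly together with Lemma~\ref{lem:lambda}; these are the same computation).

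For (4) your route is genuinely different and arguably cleaner. The paper argues by hand: given $Z^* \in \ZZ\GG(x)$ it writes $Z^*_x = u+v$ with $u\in T_x\OO$, $v\in T_x\OO^\perp$, chooses $X\in\DD(\g)$ with $\widehat{\rho}_x(X)_x=u$, and then computes $[\rho_x(Y),Z^*]_x=0$ for all $Y\in\DD(\g)$; splitting this identity along $T_x\OO\oplus T_x\OO^\perp$ yields $\widehat{\rho}_x([Y,X])_x=0$, hence $[Y,X]=0$ for all $Y$, hence $X=0$ by simplicity. You instead observe once and for all that $\ZZ_x$ is a trivial $\GG(x)$-submodule (immediate from (2) and Lemma~\ref{lem:G(x)}(2)), that the projection $T_x\widetilde{M}\to T_x\OO$ is $\GG(x)$-equivariant, and that $T_x\OO\simeq\DD(\g)$ has no nonzero $\g$-invariants since $Z(\DD(\g))=0$. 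Both arguments ultimately rest on the simplicity of $\DD(\g)$, but yours packages it as a single module-theoretic fact rather than an elementwise computation; the paper's version, on the other hand, makes the mechanism (the vanishing $[\rho_x(Y),Z^*]=0$ from Proposition~\ref{prop:Dg(x)}) more explicit, which is reused later in the analysis of $\Omega_x$.
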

\begin{proof}
    Claim (1) is proved with arguments similar to those used in the proof of Lemma~\ref{lem:TxM-module-structure}. Hence, (2) is an immediate consequence of the definition of $\lambda_x$.

    By (1) and Lemma~\ref{lem:TxM-module-structure}(2) to prove (3) it is enough to show that $\HH_0(x)$ leaves invariant $T_x\OO$. For this we observe that, from the previous results we have
    \[
        T_x\OO = ev_x(\DD\GG(x)), \quad [\HH_0(x),\GG(x)] \subset \HH_0(x)
    \]
    and $ev_x$ is a homomorphism of $\HH_0(x)$-modules; these imply that $\lambda_x(\HH_0(x))(T_x\OO) = 0$. In particular, $T_x\OO$ is a trivial $\HH_0(x)$-module.

    Now assume that $T_x\OO \cap T_x \OO^\perp = 0$. We will prove that $ev_x(\ZZ\GG(x)) \subset T_x \OO^\perp$. By Lemma~\ref{lem:G(x)}(2), if we choose an element of $\ZZ\GG(x)$, then we can assume it is of the form $Z^*$ for some $Z \in Z(\g)$. Suppose that $Z^*_x = u + v$ where $u \in T_x \OO$ and $v \in T_x \OO^\perp$. Hence, there exist $X \in \DD(\g)$ and $V \in \HH$ such that $\widehat{\rho}_x(X)_x = u$ and $V_x = v$; the latter can be picked by Proposition~\ref{prop:GromovOpenDense}. In particular, $Z^* - \widehat{\rho}_x(X) - V \in \HH$ and vanishes at $x$, which implies that
    \[
        Z^* = \widehat{\rho}_x(X) + V + W
    \]
    for some $W \in \HH_0(x)$. By Proposition~\ref{prop:Dg(x)} it follows that for every $Y \in \DD(\g)$ we have
    \begin{align*}
        0 &= [\rho_x(Y), Z^*]_x \\
            &= [\rho_x(Y), \rho_x(X)]_x + [\rho_x(Y), X^*]_x +
                [\rho_x(Y), V]_x + [\rho_x(Y), W]_x \\
            &= \rho_x([Y,X])_x + [Y,X]^*_x +
                \lambda_x(\rho_x(Y))(V_x) + \lambda_x(\rho_x(Y))(W_x) \\
            &= \widehat{\rho}_x([Y,X])_x + \lambda_x(\rho_x(Y))(v),
    \end{align*}
    where we have also used the definition of $\widehat{\rho}_x$ and the fact that $W_x = 0$. We now observe that $\widehat{\rho}_x([Y,X])_x \in T_x \OO$ and $\lambda_x(\rho_x(Y))(v) \in T_x \OO^\perp$, and so both are zero. Since $\widehat{\rho}_x$ is injective, we conclude that $[Y,X] = 0$ for every $Y \in \DD(\g)$. As a consequence $X = 0$ and so $Z^* = V + W$, which proves that
    \[
        Z^*_x = ev_x(Z^*) = ev_x(V) = v \in T_x \OO^\perp.
    \]
\end{proof}

Using Proposition~\ref{prop:GH0-module-structure}, we define the homomorphism of Lie algebras
\begin{align*}
    \lambda_x^\perp : \GG(x) + \HH_0(x) &\rightarrow \so(T_x\OO^\perp)\\
    \lambda_x^\perp(Z) &= \lambda_x(Z)|_{T_x\OO^\perp}.
\end{align*}
The following result proves that $\HH_0(x)$ is completely determined by the representation $\lambda^\perp_x$ when we assume that $T_x\OO \cap T_x\OO^\perp = 0$.

\begin{proposition}
    \label{prop:H0(x)-lambda^perp}
    For $G$ and $M$ as above, let $A$ and $U$ be as in Propositions~\ref{prop:Dg(x)} and \ref{prop:GromovOpenDense}, respectively. For a fixed point $x \in A \cap U$ assume that $T_x\OO \cap T_x\OO^\perp = 0$. Then, the homomorphism of Lie algebras
    \[
        \lambda_x^\perp : \HH_0(x) \rightarrow \so(T_x\OO^\perp)
    \]
    is injective. Furthermore, $\lambda_x^\perp(\HH_0(x))$ is a Lie subalgebra and a $\lambda_x^\perp(\GG(x))$-submodule of $\so(T_x\OO^\perp)$.
\end{proposition}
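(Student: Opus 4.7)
The plan is to handle the two conclusions in order, treating injectivity first and then the structural claims about the image.

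For the injectivity of $\lambda_x^\perp|_{\HH_0(x)}$, I would start from a hypothetical $Z \in \HH_0(x)$ with $\lambda_x^\perp(Z) = 0$ and show that $\lambda_x(Z)$ in fact vanishes on all of $T_x\widetilde{M}$. A key ingredient already established in the proof of Proposition~\ref{prop:GH0-module-structure}(3) is that $T_x\OO$ is a trivial $\HH_0(x)$-module, so $\lambda_x(Z)$ automatically annihilates $T_x\OO$. Combined with the standing hypothesis $T_x\OO \cap T_x\OO^\perp = 0$, which forces the pseudo-Riemannian decomposition $T_x\widetilde{M} = T_x\OO \oplus T_x\OO^\perp$ by the usual dimension count for orthogonal complements in a non-degenerate inner product space, this yields $\lambda_x(Z) = 0$ identically. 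Since $Z \in \HH_0(x) \subset \Kill_0(\widetilde{M},x)$ both vanishes at $x$ and has vanishing linearization there, the classical fact that a Killing field on a connected pseudo-Riemannian manifold is determined by its $1$-jet at a single point forces $Z = 0$.

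For the remaining assertions about the image $\lambda_x^\perp(\HH_0(x))$, I would simply exploit the Lie algebraic framework already in place. By Lemma~\ref{lem:H0(x)}, $\HH_0(x)$ is a Lie subalgebra of $\HH$ and moreover an ideal of $\GG(x) + \HH_0(x)$; by Proposition~\ref{prop:GH0-module-structure}(1), $\lambda_x^\perp$ is a Lie algebra homomorphism on $\GG(x) + \HH_0(x)$. Hence $\lambda_x^\perp(\HH_0(x))$ is a Lie subalgebra of $\so(T_x\OO^\perp)$, and the ideal relation descends to
\[
    [\lambda_x^\perp(\GG(x)), \lambda_x^\perp(\HH_0(x))] \subset \lambda_x^\perp(\HH_0(x)),
\]
which is precisely the $\lambda_x^\perp(\GG(x))$-submodule property.

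The only real content of the proposition is the injectivity step; the structural claims about the image follow purely by transport of structure through a Lie algebra homomorphism, so I do not expect any serious obstacle there. The most delicate point in the injectivity argument is the non-degenerate-complement observation, but this is just the standard fact that in a non-degenerate inner product space the orthogonal complement of any subspace has complementary dimension, so trivial intersection with the subspace forces the sum to exhaust the ambient space. Once that is in hand, the uniqueness of a Killing field from its $1$-jet (applicable since $\widetilde{M}$ is analytic, complete, and connected) closes the argument cleanly.
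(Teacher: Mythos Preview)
Your proposal is correct and follows essentially the same approach as the paper: both arguments use that $T_x\OO$ is a trivial $\HH_0(x)$-module together with the decomposition $T_x\widetilde{M} = T_x\OO \oplus T_x\OO^\perp$ to reduce the $1$-jet of $Z$ to its action on $T_x\OO^\perp$, and then invoke the fact that a Killing field is determined by its $1$-jet at a point; the structural claims about the image are obtained in both cases directly from $\HH_0(x)$ being an ideal in $\GG(x)+\HH_0(x)$ and $\lambda_x^\perp$ being a homomorphism.
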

\begin{proof}
    We recall that every Killing field is completely determined by its $1$-jet at $x$. If we fix $Z \in \HH_0(x)$, then $Z_x = 0$, and so $Z$ is completely determined by $[Z,V]_x$ where $V$ varies in a set of vector fields $\mathcal{A}$ such that $ev_x(\mathcal{A}) = T_x \widetilde{M}$. From the above, we already know that $ev_x(\DD\GG(x)) = T_x\OO$ and $[Z, V]_x = 0$ for every $V \in \DD\GG(x)$. Hence, given the condition $T_x\OO \cap T_x\OO^\perp = 0$, we further have that every $Z \in \HH_0(x)$ is completely determined by $[Z,V]_x$ where $V$ varies in a set of vector fields $\mathcal{A}$ such that $ev_x(\mathcal{A}) = T_x\OO^\perp$, which implies the injectivity of $\lambda_x^\perp$ on $\HH_0(x)$.

    The rest of the claims now follow easily using that $\lambda_x^\perp$ is a homomorphism and that $\HH_0(x)$ is an ideal in $\GG(x) + \HH_0(x)$.
\end{proof}

With the above Lie subalgebras of $\HH$ we now provide a first description of the structure of $\HH$.

\begin{proposition}
    \label{prop:HHwithVV(x)}
    For $G$ and $M$ as above, let $A$ and $U$ be as in Propositions~\ref{prop:Dg(x)} and \ref{prop:GromovOpenDense}, respectively. For a fixed point $x \in A \cap U$ assume that $T_x\OO \cap T_x\OO^\perp = 0$. Then there exists a $\GG(x)$-submodule $\VV(x)$ of $\HH$ such that
    \begin{align*}
        \HH &= \GG(x) \oplus \HH_0(x) \oplus \VV(x) \\
        T_x\OO^\perp &= \ZZ_x \oplus ev_x(\VV(x)) = ev_x(\ZZ\GG(x) \oplus \VV(x)).
    \end{align*}
\end{proposition}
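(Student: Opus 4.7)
The plan is to construct $\VV(x)$ by combining complete reducibility of $\HH$ and $T_x\widetilde{M}$ as $\GG(x)$-modules with a careful preimage construction through $ev_x$. First I will verify that the center $\ZZ\GG(x)$ acts trivially on both modules. For $\HH$ this is Lemma~\ref{lem:G(x)}(2), which states $[\HH, \ZZ\GG(x)] = 0$. For $T_x\widetilde{M}$, the assumption $x \in U$ together with Proposition~\ref{prop:GromovOpenDense} lets me write every $v \in T_x\widetilde{M}$ as $v = V_x$ with $V \in \HH$, and then $\lambda_x(Z^*)(v) = [Z^*, V]_x = 0$ because $\HH$ centralizes the $Z(G)_0$-action. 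Both module structures therefore factor through the simple Lie algebra $\DD\GG(x) \simeq \DD(\g)$, so Weyl's theorem yields complete reducibility of $\HH$ and of $T_x\widetilde{M}$ as $\GG(x)$-modules.

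Next, since $\ZZ_x = ev_x(\ZZ\GG(x))$ is a $\GG(x)$-submodule of $T_x\OO^\perp$ by Proposition~\ref{prop:GH0-module-structure}(4), complete reducibility applied inside the $\GG(x)$-submodule $T_x\OO^\perp$ provides a $\GG(x)$-submodule $W \subset T_x\OO^\perp$ with $T_x\OO^\perp = \ZZ_x \oplus W$. I then set $\HH_W = ev_x^{-1}(W)$, a $\GG(x)$-submodule of $\HH$ that contains $\HH_0(x) = \ker(ev_x)$ and satisfies $ev_x(\HH_W) = W$ thanks to the surjectivity of $ev_x$ from Proposition~\ref{prop:GromovOpenDense}. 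Applying complete reducibility of $\HH$ inside $\HH_W$, I choose a $\GG(x)$-submodule $\VV(x) \subset \HH_W$ with $\HH_W = \HH_0(x) \oplus \VV(x)$; by construction, $ev_x$ restricts to a $\GG(x)$-module isomorphism from $\VV(x)$ onto $W \subset T_x\OO^\perp$.

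Finally I will verify that $\GG(x) + \HH_0(x) + \VV(x)$ is direct and fills $\HH$. The intersection $\GG(x) \cap \HH_0(x)$ is trivial by Lemma~\ref{lem:H0(x)}. If $g + h = v$ with $g \in \GG(x)$, $h \in \HH_0(x)$, $v \in \VV(x)$, then $ev_x(v) = ev_x(g) \in T_x\OO \oplus \ZZ_x$ while $ev_x(v) \in W$, so the decomposition $T_x\widetilde{M} = T_x\OO \oplus \ZZ_x \oplus W$ forces $ev_x(v) = 0$ and hence $v \in \HH_0(x) \cap \VV(x) = 0$. The dimension identity $\dim\GG(x) = \dim T_x\OO + \dim\ZZ_x$, which follows from the injectivity of $\widehat{\rho}_x$ together with the local freeness of the $[G,G]$- and $Z(G)_0$-actions at $x$, then gives $\dim(\GG(x) \oplus \HH_0(x) \oplus \VV(x)) = \dim T_x\widetilde{M} + \dim\HH_0(x) = \dim\HH$. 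The identity $T_x\OO^\perp = \ZZ_x \oplus ev_x(\VV(x))$ is now immediate from $T_x\OO^\perp = \ZZ_x \oplus W$, and since $\ZZ\GG(x) \cap \VV(x) \subset \GG(x) \cap \VV(x) = 0$ this may be rewritten as $ev_x(\ZZ\GG(x) \oplus \VV(x))$.

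The principal obstacle is justifying complete reducibility for the non-semisimple reductive Lie algebra $\GG(x)$; this is resolved by the observation that $\ZZ\GG(x)$ acts trivially on both $\HH$ and $T_x\widetilde{M}$, after which the action reduces to the simple factor $\DD\GG(x)$ and Weyl's theorem applies.
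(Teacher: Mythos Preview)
Your proof is correct and follows essentially the same approach as the paper's. Both arguments hinge on the observation that $\ZZ\GG(x)$ acts trivially on $\HH$ (and hence on $T_x\widetilde{M}$), so that $\GG(x)$-submodules coincide with $\DD\GG(x)$-submodules and Weyl's complete reducibility applies; the paper compresses this into a single sentence invoking ``the simplicity of $\DD\GG(x)$'' together with Proposition~\ref{prop:GH0-module-structure}(2),(3), while you spell out the preimage construction and the dimension count explicitly.
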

\begin{proof}
    By Proposition~\ref{prop:GH0-module-structure}(4) it follows that $ev_x(\ZZ\GG(x)) = \ZZ_x \subset T_x\OO^\perp$, and we also have $ev_x(\DD\GG(x) + \HH_0(x)) = T_x\OO$. Hence, the simplicity of $\DD\GG(x)$ and Proposition~\ref{prop:GH0-module-structure}(2) and (3) imply the existence of a $\DD\GG(x)$-submodule $\VV(x)$ of $\HH$ with the required properties. By Lemma~\ref{lem:G(x)}(2) we have $[\VV(x), \ZZ\GG(x)] \subset [\HH, \ZZ\GG(x)] = 0$ and so $\VV(x)$ is a $\GG(x)$-submodule.
\end{proof}

We will now consider the integrability of the normal bundle $T\OO^\perp$ for the case where $T_x \OO \cap T_x \OO^\perp = 0$ at every point $x$. The next result provides a necessary condition for this to hold. It is a consequence of Lemma~2.7 from \cite{QuirogaCh} (see also Lemma~1.4 from \cite{OQ-SO}).

\begin{lemma}
    \label{lem:TOoplusTOperp}
    Let $G$ and $M$ be as above. If $\dim(M) < 2\dim [G.G]$, then $T\OO$ and $T\OO^\perp$ have non-degenerate fibers with respect to the  metric of $M$. In particular, we have $TM = T\OO \oplus T\OO^\perp$, a sum of analytic vector subbundles.
\end{lemma}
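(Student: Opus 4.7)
The plan is to first establish non-degeneracy of $T_x\OO$ at the generic points $x \in A \cap U$ using the $\GG(x)$-module structure from Lemma~\ref{lem:TxM-module-structure}, and then to propagate this pointwise property to every point of $M$ by a $[G,G]$-invariance argument analogous to the one used in Proposition~\ref{prop:Zx}. The non-degeneracy of $T\OO^\perp$ and the direct sum decomposition will then follow by standard linear algebra for non-degenerate symmetric forms.

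First, I would fix $x \in A \cap U$ and examine the radical $R_x = T_x\OO \cap T_x\OO^\perp$ of the restriction of the metric to $T_x\OO$. By Lemma~\ref{lem:TxM-module-structure}(3), both $T_x\OO$ and $T_x\OO^\perp$ are $\GG(x)$-submodules of $T_x\widetilde{M}$, and hence so is $R_x$. Lemma~\ref{lem:TxM-module-structure}(2) identifies $T_x\OO$, as a $\GG(x)$-module, with the adjoint module $\DD(\g)$, which is simple because $[G,G]$ is simple. Therefore $R_x$ equals either $0$ or all of $T_x\OO$. The latter possibility would give $T_x\OO \subset T_x\OO^\perp$, forcing $\dim[G,G] = \dim T_x\OO \leq \dim M - \dim T_x\OO = \dim M - \dim[G,G]$, i.e.\ $\dim M \geq 2\dim[G,G]$, contradicting our assumption. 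Hence $R_x = 0$ and the metric is non-degenerate on $T_x\OO$ for every $x \in A \cap U$.

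Next, to extend this to all of $M$, I would fix a basis $X_1, \dots, X_d$ of $\DD(\g)$ and consider the analytic function
\[
    F(x) = \det\bigl( h_x(X_i^*, X_j^*) \bigr)_{i,j=1}^d.
\]
Using the identity $g_* X^* = (\mathrm{Ad}(g)X)^*$ for $g \in [G,G]$ together with the $[G,G]$-invariance of $h$, a direct computation yields $F(g \cdot x) = (\det \mathrm{Ad}(g))^{-2} F(x)$; since $[G,G]$ is semisimple, $\det \mathrm{Ad}(g) = 1$, and so $F$ is $[G,G]$-invariant. The existence of a dense $[G,G]$-orbit together with analyticity forces $F$ to be constant on $M$, and since $F \neq 0$ at points coming from $A \cap U$, we conclude $F \neq 0$ everywhere. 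Thus $T\OO$ is non-degenerate at every point of $M$; its orthogonal $T\OO^\perp$ is then automatically non-degenerate, and $TM = T\OO \oplus T\OO^\perp$ as a direct sum of analytic vector subbundles.

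The main obstacle is the module-theoretic input: one needs $T_x\OO$ realized as a \emph{simple} $\GG(x)$-module so that the radical dichotomy applies, and this is precisely what Lemma~\ref{lem:TxM-module-structure}(2) provides once one restricts to the generic set $A \cap U$. Once this identification is in place, the dimensional contradiction and the $[G,G]$-invariance propagation of $F$ are essentially routine.
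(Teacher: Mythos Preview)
Your proof is correct. The paper itself does not give a proof of this lemma but simply cites Lemma~2.7 of \cite{QuirogaCh} (and Lemma~1.4 of \cite{OQ-SO}); your argument is a self-contained version using the machinery already developed here, namely the irreducibility of $T_x\OO \simeq \DD(\g)$ from Lemma~\ref{lem:TxM-module-structure}(2) at generic points followed by the $[G,G]$-invariance/dense-orbit propagation in the style of Proposition~\ref{prop:Zx}. This is essentially the standard argument for such non-degeneracy results, so there is no genuine methodological divergence to report.
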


Assume from now on that $T_x \OO \cap T_x \OO^\perp = 0$ at every point $x$, in other words, that we have $TM = T\OO \oplus T\OO^\perp$ as well as the corresponding property for $\widetilde{M}$. Then, there is an analytic map of vector bundles
\[
    \overline{\omega} : T\widetilde{M} \rightarrow T\OO
\]
given by the orthogonal projection onto $T\OO$. We also recall from Section~\ref{sec:Kill-G-actions} that there is an isomorphism given by
\begin{align*}
    \alpha_x : T_x \OO &\rightarrow \DD(\g) \\
        \alpha_x(X^*_x) &\mapsto X
\end{align*}
that varies analytically with respect to $x$. This yields the analytic $\DD(\g)$-valued $1$-form $\omega$ on $\widetilde{M}$ given by
\[
    \omega_x = \alpha_x \circ \overline{\omega}_x
\]
where $x \in \widetilde{M}$. We introduce the analytic $\DD(\g)$-valued $2$-form $\Omega$ given by
\[
    \Omega_x = d\omega_x|_{\wedge^2 T_x\OO^\perp},
\]
for every $x \in \widetilde{M}$. If $X, Y$ are smooth sections of $T\OO^\perp$, then $\omega(X) = \omega(Y) = 0$ and so we have
\[
    \Omega(X,Y) = X(\omega(Y)) - Y(\omega(X)) - \omega([X,Y]) = -\omega([X,Y]),
\]
which implies the following result (see \cite{Gromov,QuirogaCh}).
\begin{lemma}
    \label{lem:TOperp-Omega}
    For $G$ and $M$ as above, assume that $T\widetilde{M} = T\OO \oplus T\OO^\perp$. Then, $T\OO^\perp$ is integrable if and only if $\Omega \equiv 0$.
\end{lemma}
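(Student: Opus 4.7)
The plan is essentially to unpack the identity $\Omega(X,Y) = -\omega([X,Y])$ already displayed just before the statement, and then invoke the Frobenius theorem. The key observation is that $\omega$ is built as the composition $\alpha \circ \overline{\omega}$, where $\overline{\omega}$ is the orthogonal projection onto $T\OO$ along $T\OO^\perp$ (using the splitting $T\widetilde{M} = T\OO \oplus T\OO^\perp$ guaranteed by the hypothesis) and $\alpha_x : T_x\OO \to \DD(\g)$ is a pointwise isomorphism. Consequently, for any tangent vector $v \in T_x \widetilde{M}$, one has $\omega_x(v) = 0$ if and only if $\overline{\omega}_x(v) = 0$, which in turn is equivalent to $v \in T_x\OO^\perp$.

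With this pointwise description in place, I would argue the two directions as follows. First, suppose $T\OO^\perp$ is integrable. Then for any two smooth local sections $X, Y$ of $T\OO^\perp$, Frobenius gives $[X,Y] \in T\OO^\perp$, so $\omega([X,Y]) = 0$, and hence $\Omega(X,Y) = -\omega([X,Y]) = 0$. Since smooth sections of $T\OO^\perp$ span each fiber, this yields $\Omega \equiv 0$. Conversely, suppose $\Omega \equiv 0$. Then the displayed formula gives $\omega([X,Y]) = 0$ for every pair $X, Y$ of local sections of $T\OO^\perp$, hence $[X,Y] \in T\OO^\perp$ by the characterization of $\ker \omega$ above; Frobenius then yields integrability of $T\OO^\perp$.

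There is no genuine obstacle here: the content of the lemma is exactly the translation of the Frobenius criterion into the language of the $\DD(\g)$-valued form $\omega$. The only point requiring mild care is noting that it suffices to test $\Omega$ on pairs of sections of $T\OO^\perp$ rather than on arbitrary local vector fields, which is built into the definition $\Omega_x = d\omega_x|_{\wedge^2 T_x\OO^\perp}$ and the fact that $\wedge^2 T_x\OO^\perp$ is spanned by decomposable elements realized by local sections of $T\OO^\perp$.
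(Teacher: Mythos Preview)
Your proposal is correct and follows essentially the same approach as the paper: the paper simply records the identity $\Omega(X,Y) = -\omega([X,Y])$ for sections $X,Y$ of $T\OO^\perp$ and asserts (with references to \cite{Gromov,QuirogaCh}) that this implies the lemma, while you have supplied the obvious Frobenius argument together with the observation $\ker\omega_x = T_x\OO^\perp$ that makes this implication explicit.
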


Assume that $T\widetilde{M} = T\OO \oplus T\OO^\perp$, which implies that for every $x \in \widetilde{M}$ the subspace $T_x\OO^\perp$ is non-degenerate with respect to the scalar product of $T_x \widetilde{M}$. Hence Lemma~\ref{lem:so(E)} yields from the linear map $\Omega_x : \wedge^2 T_x\OO^\perp \rightarrow \DD(\g)$ a corresponding map $\so(T_x\OO^\perp) \rightarrow \DD(\g)$ given by
\[
    \Omega_x \circ \varphi_x^{-1}
\]
where $\varphi_x : \wedge^2 T_x\OO^\perp \rightarrow \so(T_x\OO^\perp)$ is the isomorphism defined by Lemma~\ref{lem:so(E)}. This does not change the $\so(T_x\OO^\perp)$-module structure on the domain. Hence, we will denote with the same symbol $\Omega_x$ the linear map given by the $2$-form $\Omega$ when considered as a map $\so(T_x\OO^\perp) \rightarrow \DD(\g)$.

It turns out that the forms $\omega_x$ and $\Omega_x$ have intertwining properties with respect to the module structure over $\GG(x) + \HH_0(x)$.

\begin{proposition}
    \label{prop:Omega-intertwining}
    For $G$ and $M$ as above, let $A$ and $U$ be as in Propositions~\ref{prop:Dg(x)} and \ref{prop:GromovOpenDense}, respectively. Assume that $T\widetilde{M} = T\OO \oplus T\OO^\perp$. For a fixed point $x \in A \cap U$, the following properties hold.
    \begin{enumerate}
        \item For every $X \in \DD(\g)$ and $Y \in \X(\widetilde{M})$ we have
            \[
                \omega_x([\rho_x(X), Y]_x) = [X, \omega_x(Y)].
            \]
        \item The linear map $\Omega_x : \wedge^2 T_x \OO^\perp \rightarrow \DD(\g)$ intertwines the homomorphism of Lie algebras $\widehat{\rho}_x : \g \rightarrow \GG(x)$ for the actions of $\g$ on $\DD(\g)$ and of $\GG(x)$ on $T_x\OO^\perp$ via $\lambda_x^\perp$. More precisely, we have
            \[
                [X, \Omega_x(u\wedge v)]
                = \Omega_x\left(\lambda_x^\perp(\widehat{\rho}_x(X))(u\wedge v)\right)
            \]
            for every $X \in \g$ and $u,v \in T_x\OO^\perp$.
        \item The linear map $\Omega_x : \so(T_x\OO^\perp) \rightarrow \DD(\g)$ is $\HH_0(x)$-invariant via $\lambda_x^\perp$. More precisely, we have
            \[
                \Omega_x\left([\lambda_x^\perp(Z), T]\right) = 0,
            \]
            for every $Z \in \HH_0(x)$ and $T \in \so(T_x\OO^\perp)$. In other words, we have
            \[
                [\lambda_x^\perp(\HH_0(x)), \so(T_x\OO^\perp)] \subset \ker(\Omega_x).
            \]
    \end{enumerate}
\end{proposition}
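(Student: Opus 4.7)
The plan is to handle each part by working locally near $x$ and exploiting two features of $\rho_x(X)$: it vanishes at $x$, and its flow preserves both $T\OO$ and $T\OO^\perp$ by Proposition~\ref{prop:Dg(x)}(3). For part (1), split $Y \in \X(\widetilde{M})$ locally as $Y = Y_\top + Y_\perp$ with $Y_\top$ a section of $T\OO$ and $Y_\perp$ a section of $T\OO^\perp$. Since $\rho_x(X)$ preserves $T\OO^\perp$, the bracket $[\rho_x(X), Y_\perp]$ stays in $T\OO^\perp$ and is annihilated by $\omega$. Fix a basis $Y_1,\dots,Y_k$ of $\DD(\g)$ and write $Y_\top = \sum_i f_i\, Y_i^*$. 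Leibniz together with Proposition~\ref{prop:Dg(x)}(3) give
\[
    [\rho_x(X), Y_\top] = \sum_i (\rho_x(X)f_i)\, Y_i^* + \sum_i f_i\, [X, Y_i]^*,
\]
and evaluating at $x$ kills the first sum because $\rho_x(X)_x = 0$. Applying $\alpha_x$ to the rest yields $[X, \sum_i f_i(x) Y_i] = [X, \omega_x(Y)]$.

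For part (2), decompose $X = X_1 + X_2$ with $X_1 \in \DD(\g)$ and $X_2 \in Z(\g)$. The central piece contributes zero to both sides: $[X_2, \cdot]$ vanishes on $\DD(\g)$ by centrality, and $\widehat{\rho}_x(X_2) = X_2^*$ commutes with every element of $\HH$ since $\HH$ centralizes $Z(G)_0$, so $\lambda_x^\perp(\widehat{\rho}_x(X_2)) = 0$. For $X = X_1$, choose sections $U, V$ of $T\OO^\perp$ near $x$ with $U_x = u$ and $V_x = v$, so that $\Omega_x(u \wedge v) = -\omega_x([U, V]_x)$. Applying part (1) with $Y = [U, V]$ gives
\[
    \omega_x([\rho_x(X_1), [U, V]]_x) = -[X_1, \Omega_x(u \wedge v)].
\]
Jacobi expansion together with the $T\OO^\perp$-preservation of $\rho_x(X_1)$ rewrites the left side as $-\Omega_x([\rho_x(X_1), U]_x \wedge v) - \Omega_x(u \wedge [\rho_x(X_1), V]_x)$. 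The final identification $[\rho_x(X_1), U]_x = \lambda_x^\perp(\widehat{\rho}_x(X_1))(u)$ follows by picking $W \in \HH$ with $W_x = u$: the computation in Lemma~\ref{lem:TxM-module-structure} gives $\lambda_x^\perp(\widehat{\rho}_x(X_1))(u) = [\rho_x(X_1), W]_x$, and the vanishing of $\rho_x(X_1)$ at $x$ allows replacing $W$ by any section $U$ of $T\OO^\perp$ extending $u$.

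For part (3), the key auxiliary claim is that every $Z \in \HH_0(x)$ satisfies $L_Z\omega = 0$. Indeed, $Z$ centralizes $[G, G]$, so $[Z, Y^*] = 0$ and $\omega(Y^*) = Y$ is constant, making $(L_Z\omega)(Y^*) = 0$ for $Y \in \DD(\g)$; and as a Killing field commuting with $[G, G]$ the flow of $Z$ preserves the foliation $\OO$ and hence its orthogonal complement, so $[Z, V]$ remains a section of $T\OO^\perp$ whenever $V$ is, making both terms of $(L_Z\omega)(V) = Z(\omega(V)) - \omega([Z, V])$ vanish. Tensoriality of $L_Z\omega$ then forces $L_Z\omega = 0$. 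Applied to sections $U, V$ of $T\OO^\perp$ near $x$ with $U_x = u, V_x = v$, the identity $\omega([Z, [U, V]]) = Z(\omega([U, V]))$ specializes at $x$ to $\omega_x([Z, [U, V]]_x) = 0$ since $Z_x = 0$. Jacobi expansion combined with the $T\OO^\perp$-preservation of $Z$ yields
\[
    \Omega_x(\lambda_x^\perp(Z)(u) \wedge v + u \wedge \lambda_x^\perp(Z)(v)) = 0.
\]
Passing through the $\so(T_x\OO^\perp)$-equivariant isomorphism $\varphi_x$ of Lemma~\ref{lem:so(E)}, this becomes $\Omega_x([\lambda_x^\perp(Z), T]) = 0$ for every $T \in \so(T_x\OO^\perp)$, as claimed.

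The main obstacle is bookkeeping: verifying carefully that $\lambda_x^\perp(\widehat{\rho}_x(X_1))(u)$ and $\lambda_x^\perp(Z)(u)$ can be computed via arbitrary sections of $T\OO^\perp$ extending $u$ rather than only via elements of $\HH$, and tracking how the derivation identities transport between vector field brackets, wedge products on $T_x\OO^\perp$, and the commutator in $\so(T_x\OO^\perp)$.
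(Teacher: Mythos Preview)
Your proof is correct and, for parts (1) and (2), essentially identical to the paper's argument: the same decomposition $Y = Y^\top + Y^\perp$, the same use of $\rho_x(X)_x = 0$ to swap representatives, and the same Jacobi expansion. Your explicit basis in (1) is a cosmetic variant of the paper's observation that $[\rho_x(X), Y^\top]_x$ depends only on $Y^\top_x = \omega_x(Y)^*_x$.

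For part (3) your route differs slightly. You prove the global identity $L_Z\omega = 0$ by checking it on the generating sections $Y^*$ and on sections of $T\OO^\perp$, then specialize at $x$. The paper instead works pointwise: it shows $\overline{\omega}_x([Z,\widehat{W}]_x) = 0$ for an arbitrary vector field $\widehat{W}$ by replacing $\widehat{W}^\top$ with an element of $\HH$ having the same value at $x$ and then invoking that $\overline{\omega}_x$ is an $\HH_0(x)$-module map onto the trivial module $T_x\OO$ (from Proposition~\ref{prop:GH0-module-structure}). Both arguments rest on the same ingredients---$Z$ centralizes $[G,G]$, $Z$ is Killing hence preserves $T\OO^\perp$, and $Z_x = 0$---so the difference is packaging rather than substance; your Lie-derivative formulation is arguably more self-contained, while the paper's version makes explicit the module-theoretic reason the computation collapses.
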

\begin{proof}
    In what follows, for any vector field $Y$ we will denote with $Y^\top$ and $Y^\perp$ the sections of $T\OO$ and $T\OO^\perp$, respectively, such that $Y = Y^\top + Y^\perp$.

    To prove (1), fix $X \in \DD(\g)$ and $Y \in \X(\widetilde{M})$. Since $\rho_x(X)$ preserves both $T\OO$ and $T\OO^\perp$, it follows that $[\rho_x(X), Y^\top]$ and $[\rho_x(X), Y^\perp]$ are sections of $T\OO$ and $T\OO^\perp$, respectively. We also note that $Y^\top_x = \omega_x(Y)^*_x$. On the other hand, $\rho_x(X)$ vanishes at $x$ and the dependence of $[\rho_x(X),Y^\top]_x$ with respect to $Y^\top$ is only on $Y^\top_x$. Hence, we have the following identities
    \begin{align*}
        \omega_x([\rho_x(X), Y]_x) &= \omega_x([\rho_x(X), Y^\top]_x) \\
            &= \omega_x([\rho_x(X), \omega_x(Y)^*]_x) \\
            &= \omega_x([X, \omega_x(Y)]^*_x) \\
            &= [X, \omega_x(Y)],
    \end{align*}
    where we have used in the third equality Proposition~\ref{prop:Dg(x)}(3).

    For (2), we consider the interpretation of $\Omega_x$ as a bilinear form and prove that
    \[
    [X, \Omega_x(u,v)] = \Omega_x\left(\lambda_x^\perp(\widehat{\rho}_x(X))(u),v\right) +
                \Omega_x\left(u,\lambda_x^\perp(\widehat{\rho}_x(X))(v)\right)
    \]
    for every $X \in \g$ and $u,v \in T_x\OO^\perp$. The identity is trivial for $X \in Z(\g)$, so we will assume that $X \in \DD(\g)$. Let $Y_1,Y_2 \in \HH$ such that $Y_1(x) = u$ and $Y_2(x) = v$. Then, by definition we have
    \[
        \lambda_x^\perp(\widehat{\rho}_x(X))(u) = [\widehat{\rho}_x(X), Y_1]_x
        = [\rho_x(X) + X^*, Y_1]_x = [\rho_x(X), Y_1]_x
    \]
    and similarly we have
    \[
        \lambda_x^\perp(\widehat{\rho}_x(X))(v) = [\rho_x(X), Y_2]_x.
    \]
    We now choose $\widehat{Y}_1, \widehat{Y}_2$ sections of $T\OO^\perp$ such that $\widehat{Y}_1(x) = u$ and $\widehat{Y}_2(x) = v$. As remarked above, since $\rho_x(X)$ vanishes at $x$ we have
    \begin{align*}
        &\lambda_x^\perp(\widehat{\rho}_x(X))(u) = [\rho_x(X), Y_1]_x = [\rho_x(X), \widehat{Y}_1]_x, \\
        &\lambda_x^\perp(\widehat{\rho}_x(X))(v) = [\rho_x(X), Y_2]_x = [\rho_x(X), \widehat{Y}_2]_x.
    \end{align*}
    Using the above we now compute
    \begin{align*}
        \Omega_x\left(\lambda_x^\perp(\widehat{\rho}_x(X))(u),v\right) &+
                \Omega_x\left(u,\lambda_x^\perp(\widehat{\rho}_x(X))(v)\right) \\
            &= \Omega_x\left([\rho_x(X), \widehat{Y}_1]_x,\widehat{Y}_2(x)\right) +
                \Omega_x\left(\widehat{Y}_1(x),[\rho_x(X), \widehat{Y}_2]_x\right) \\
            &= -\omega_x\left([[\rho_x(X), \widehat{Y}_1],\widehat{Y}_2]_x\right)
                -\omega_x\left([\widehat{Y}_1,[\rho_x(X), \widehat{Y}_2]]_x\right) \\
            &= -\omega_x\left([\rho_x(X), [\widehat{Y}_1,\widehat{Y}_2]]_x\right) \\
            &= -[X, \omega_x([\widehat{Y}_1, \widehat{Y}_2])] \\
            &= [X, \Omega_x(\widehat{Y}_1(x), \widehat{Y}_2(x))] \\
            &= [X, \Omega_x(u,v)],
    \end{align*}
    where we have used (1) in the fourth equality.

    To prove (3) we observe that it is enough to show that
    \[
        \Omega_x\left(\lambda_x^\perp(Z)(u),v\right) + \Omega_x\left(u,\lambda_x^\perp(Z)(v)\right) = 0,
    \]
    for any given $Z \in \HH_0(x)$, $u, v \in T_x\OO^\perp$, i.e.~we can consider $\Omega_x$ as a linear map $\wedge^2 T_x\OO^\perp \rightarrow \DD(\g)$. This is the case by the above remarks on Lemma~\ref{lem:so(E)}, which imply that $\varphi_x : \wedge^2 T_x\OO^\perp \rightarrow \so(T_x\OO^\perp)$ is an isomorphism of $\HH_0(x)$-modules via the representation $\lambda_x^\perp : \GG(x) + \HH_0(x) \rightarrow \so(T_x\OO^\perp)$.

    Given $Z \in \HH_0(x)$ and $u,v \in T_x\OO^\perp$, we start by choosing vector fields $Y_1, Y_2, \widehat{Y}_1, \widehat{Y}_2$ as above: $Y_1, Y_2$ belong to $\HH$, $\widehat{Y}_1, \widehat{Y}_2$ are sections of $T\OO^\perp$, $Y_1(x) = \widehat{Y}_1(x) = u$ and $Y_2(x) = \widehat{Y}_2(x) = v$. As in the proof of (2), since $Z_x = 0$, we have
    \begin{align*}
        &\lambda_x^\perp(Z)(u) = [Z, Y_1]_x = [Z, \widehat{Y}_1]_x, \\
        &\lambda_x^\perp(Z)(v) = [Z, Y_2]_x = [Z, \widehat{Y}_2]_x,
    \end{align*}
    and more generally, for any pair of vector fields $\widehat{W}$ and $W$ whose value at $x$ are the same we have
    \[
        [Z,\widehat{W}]_x = [Z, W]_x.
    \]
    Next, we observe that for any vector field $\widehat{W} \in \X(\widetilde{M})$ if we let $W \in \HH$ be such that $W_x = \widehat{W}^\top_x$, then
    \begin{align*}
        \overline{\omega}_x([Z,\widehat{W}]_x) &= \overline{\omega}_x([Z,\widehat{W}^\top]_x) \\
            &= \overline{\omega}_x([Z, W]_x) \\
            &= \overline{\omega}_x(\lambda_x(Z)(W_x)) \\
            &= \lambda_x(Z)(\overline{\omega}_x(W_x)) = 0,
    \end{align*}
    where we have used that $\overline{\omega}_x$ is a homomorphism of $\HH_0(x)$-modules and that $T_x\OO$ is a trivial $\HH_0(x)$-module. This relation in the case $\widehat{W} = [\widehat{Y}_1, \widehat{Y}_2]$ implies that
    \[
        0 = \overline{\omega}_x\left([Z,[\widehat{Y}_1,\widehat{Y}_2]]_x\right)
            = \overline{\omega}_x\left([[Z,\widehat{Y}_1],\widehat{Y}_2]_x\right)
                + \overline{\omega}_x\left([\widehat{Y}_1,[Z,\widehat{Y}_2]]_x\right)
    \]
    and applying $\alpha_x$ we obtain
    \begin{align*}
        0 &= \omega_x\left([[Z,\widehat{Y}_1],\widehat{Y}_2]_x\right)
            + \omega_x\left([\widehat{Y}_1,[Z,\widehat{Y}_2]]_x\right) \\
            &= -\Omega_x\left([Z,\widehat{Y}_1]_x,\widehat{Y}_2(x)\right)
                - \Omega_x\left(\widehat{Y}_1(x),[Z,\widehat{Y}_2]_x\right) \\
            &= -\Omega_x\left(\lambda_x^\perp(Z)(u),v\right) - \Omega_x\left(u,\lambda_x^\perp(Z)(v)\right),
    \end{align*}
    thus proving our last claim. Note that we have used in the second equality that, for $i=1,2$, the vector fields $\widehat{Y}_i$ and $[Z,\widehat{Y}_i]$ are sections of $T\OO^\perp$, and in the third identity the above formulas for $\lambda_x^\perp(Z)$ applied to $u,v$.
\end{proof}

For our subsequent analysis, we will consider the two cases given by the following result.

\begin{proposition}
    \label{prop:TOperp-integrable-q}
    Let $G$ and $M$ be as above, and assume that $T\widetilde{M} = T\OO \oplus T\OO^\perp$. Then, one of the following conditions is satisfied.
    \begin{enumerate}
    \item The normal bundle $T\OO^\perp$ is integrable.
    \item There is a dense conull subset $A_0 \subset \widetilde{M}$ contained in $A \cap U$, where $A$ and $U$ are given by Propositions~\ref{prop:Dg(x)} and \ref{prop:GromovOpenDense}, respectively, such that for every $x \in A_0$ the following properties are satisfied.
        \begin{enumerate}
            \item The linear map $\Omega_x : \wedge^2 T_x\OO^\perp \rightarrow \DD(\g)$ is surjective.
            \item The $\DD\GG(x)$-module structure on $T_x\OO^\perp$ is non-trivial.
            \item The homomorphism of Lie algebras $\lambda_x^\perp : \HH_0(x) \rightarrow \so(T_x\OO^\perp)$ is injective. Furthermore, $\lambda_x^\perp(\HH_0(x))$ is a $\lambda_x^\perp(\GG(x))$-submodule and a Lie subalgebra of $\so(T_x\OO^\perp)$ that satisfies
                \[
                    [\lambda_x^\perp(\HH_0(x)), \so(T_x\OO^\perp)] \subset \ker(\Omega_x).
                \]
        \end{enumerate}
    \end{enumerate}
    In particular, if (2) holds, then $T\OO^\perp$ is not integrable.
\end{proposition}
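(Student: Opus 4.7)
The plan is to split into cases according to whether the $\DD(\g)$-valued $2$-form $\Omega$ vanishes identically on $\widetilde{M}$. If $\Omega \equiv 0$ then Lemma~\ref{lem:TOperp-Omega} immediately yields case (1). Otherwise, since $\Omega$ is analytic and $\widetilde{M}$ is connected, the set where $\Omega_x \neq 0$ is open, dense and conull. I will take $A_0$ to be the intersection of this set with $A \cap U$, so that $A_0$ remains dense and conull in $\widetilde{M}$, and then verify properties (a)--(c) for every $x \in A_0$.

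For (a), the key observation is that, by Proposition~\ref{prop:Omega-intertwining}(2), the map $\Omega_x : \wedge^2 T_x\OO^\perp \rightarrow \DD(\g)$ is $\DD(\g)$-equivariant, where $\DD(\g)$ acts on itself via the adjoint representation (transported through $\widehat{\rho}_x$). Consequently the image of $\Omega_x$ is an ideal of $\DD(\g)$, which by simplicity of $\DD(\g)$ must be either $0$ or all of $\DD(\g)$; by the choice of $A_0$ it is nonzero, so $\Omega_x$ is surjective. For (b), if the $\DD\GG(x)$-module structure on $T_x\OO^\perp$ were trivial, then the induced action on $\wedge^2 T_x\OO^\perp$ would also vanish, so Proposition~\ref{prop:Omega-intertwining}(2) would force $[X,\Omega_x(u\wedge v)] = 0$ for all $X \in \DD(\g)$ and $u,v \in T_x\OO^\perp$. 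The image of $\Omega_x$ would then lie in the center of the simple algebra $\DD(\g)$, which is zero, contradicting (a).

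For (c), the injectivity of $\lambda_x^\perp$ on $\HH_0(x)$ together with the statement that $\lambda_x^\perp(\HH_0(x))$ is a Lie subalgebra and a $\lambda_x^\perp(\GG(x))$-submodule of $\so(T_x\OO^\perp)$ are exactly the content of Proposition~\ref{prop:H0(x)-lambda^perp}, whose hypothesis $T_x\OO \cap T_x\OO^\perp = 0$ is in force. The remaining inclusion $[\lambda_x^\perp(\HH_0(x)), \so(T_x\OO^\perp)] \subset \ker(\Omega_x)$ is precisely Proposition~\ref{prop:Omega-intertwining}(3). The concluding ``in particular'' statement follows from (a): surjectivity of $\Omega_x$ at any $x \in A_0$ forces $\Omega \not\equiv 0$, and Lemma~\ref{lem:TOperp-Omega} then rules out integrability of $T\OO^\perp$. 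The only genuinely nontrivial step is the irreducibility argument underlying (a) and (b); everything else is direct assembly of Propositions~\ref{prop:H0(x)-lambda^perp} and \ref{prop:Omega-intertwining}.
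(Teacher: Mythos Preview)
Your proof is correct and follows essentially the same approach as the paper's: both split on whether $\Omega$ vanishes identically (equivalently, by Lemma~\ref{lem:TOperp-Omega}, whether $T\OO^\perp$ is integrable), define $A_0$ as the nonvanishing locus of $\Omega$ intersected with $A \cap U$, derive (a) from the simplicity of $\DD(\g)$ together with Proposition~\ref{prop:Omega-intertwining}(2), and read off (c) directly from Propositions~\ref{prop:H0(x)-lambda^perp} and \ref{prop:Omega-intertwining}(3). Your justification of (b) is actually more explicit than the paper's, which simply asserts ``(b) follows from (a)''.
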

\begin{proof}
    Let us assume that $T\OO^\perp$ is not integrable. By Lemma~\ref{lem:TOperp-Omega} we have $\Omega \not= 0$, and since $\Omega$ is analytic, the set $A'$ of points $x \in \widetilde{M}$ where $\Omega_x \not= 0$ is the complement of a proper analytic subset. In particular, $A'$ is an open dense conull subset of $\widetilde{M}$. Then, we take $A_0 = A' \cap S \cap U$. Hence, (a) follows from Proposition~\ref{prop:Omega-intertwining}(2) and the fact that $\DD(\g)$ is simple. Now (b) follows from (a). Also, (c) is a restatement of Proposition~\ref{prop:H0(x)-lambda^perp} and Proposition~\ref{prop:Omega-intertwining}(3).

    Finally, if (2) holds, then from its part (a) it follows that $\Omega_x \not= 0$ for every $x \in A_0$ and so that $T\OO^\perp$ is not integrable.
\end{proof}

Case (1) of Proposition~\ref{prop:TOperp-integrable-q} has already been considered in \cite{QuirogaCh}. With this respect, the following is a consequence of Theorem~1.1 of \cite{QuirogaCh}.

\begin{proposition}
    \label{prop:TOperp-int-GtimesN}
    Let $G$ and $M$ be as above such that $T\widetilde{M} = T\OO \oplus T\OO^\perp$, and assume that $M$ is geodesically complete. If case (1) of Proposition~\ref{prop:TOperp-integrable-q} holds, then there exist
    \begin{enumerate}
        \item an isometric finite covering map $\widehat{M} \rightarrow M$ to which the $[G,G]$-action lifts,
        \item a simply connected pseudo-Riemannian manifold $N_1$,
        \item and a discrete subgroup $\Gamma \subset [G,G] \times \Iso(N_1)$,
    \end{enumerate}
    such that $\widehat{M}$ is $[G,G]$-equivariantly isometric to $([G,G] \times N_1)/ \Gamma$.
\end{proposition}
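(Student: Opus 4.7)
The plan is to apply Theorem~1.1 of \cite{QuirogaCh} essentially as a black box, since that theorem is designed to produce exactly this kind of isometric splitting once one has a geodesically complete analytic pseudo-Riemannian manifold carrying an isometric locally free action of a non-compact connected simple Lie group whose orbit foliation has non-degenerate tangent distribution and integrable orthogonal complement. So the work lies entirely in checking that the present setting feeds correctly into that result.

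First I would verify the hypotheses one by one. By our standing assumptions $[G,G]$ is connected, simple, and non-compact; its action on $M$ is isometric, analytic, and everywhere locally free, the last point being recorded at the beginning of Section~\ref{sec:Kill-G-actions} following \cite{Szaro,Zeghib}. The decomposition $T\widetilde{M}=T\OO\oplus T\OO^{\perp}$ (which by Lemma~\ref{lem:TOoplusTOperp} is an analytic decomposition into non-degenerate subbundles) descends to $M$ because $\widetilde{M}\to M$ is a $[G,G]$-equivariant local isometry intertwining the orbit foliations. Case~(1) of Proposition~\ref{prop:TOperp-integrable-q} is precisely the integrability of $T\OO^{\perp}$, and geodesic completeness plus analyticity are already assumed.

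Next I would recall the structural ingredients that make \cite{QuirogaCh} apply. The isomorphism $T\OO\simeq M\times\DD(\g)$ provided in Section~\ref{sec:Kill-G-actions} transports the restriction of the metric to $T\OO$ into a family of symmetric bilinear forms on $\DD(\g)$; the density of a $[G,G]$-orbit together with the $[G,G]$-invariance of the metric shows that this family is constant, and the simplicity of $\DD(\g)$ forces this constant form to be a scalar multiple of the Killing form, hence bi-invariant. Combined with the integrability of both $T\OO$ and $T\OO^{\perp}$ and the parallelism considerations in \cite{QuirogaCh}, this is exactly the input used there to derive the local product structure. Geodesic completeness and analyticity then allow that local structure to be globalized to a finite cover via a standard developing/monodromy argument, producing the simply connected $N_1$ (obtained from a leaf of $\OO^{\perp}$) and the discrete $\Gamma\subset [G,G]\times\Iso(N_1)$.

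The only point deserving care is that Theorem~1.1 of \cite{QuirogaCh} is stated for an isometric action of a simple Lie group, whereas the group $G$ here is reductive with possibly non-trivial center. This is, however, not an obstruction: the conclusion of the present proposition only concerns the $[G,G]$-action and the ambient pseudo-Riemannian geometry of $M$, and the role of the $Z(G)_0$-action in our setup has already been captured by the passage to $\mfKill(\widetilde{M},Z(G)_0)$ in the preceding sections. Thus applying \cite{QuirogaCh} to the $[G,G]$-action on $M$ directly yields the finite cover $\widehat{M}\to M$, the manifold $N_1$, and the lattice $\Gamma$ with all the claimed properties.
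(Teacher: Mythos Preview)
Your proposal is correct and matches the paper's approach: the paper itself offers no proof, stating only that the proposition ``is a consequence of Theorem~1.1 of \cite{QuirogaCh}.'' Your write-up is in fact more detailed than the paper, as you spell out the hypothesis verification (local freeness of the $[G,G]$-action, non-degeneracy of $T\OO$, integrability of $T\OO^\perp$, completeness, analyticity) and correctly note that only the $[G,G]$-action is relevant, so the reductive nature of $G$ causes no difficulty.
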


Hence, to complete the study of the structure of M it remains to consider case~(2)
of Proposition~\ref{prop:TOperp-integrable-q}.

\section{The centralizer of isometric $\wtU(p,q)$-actions}
\label{sec:Kill-Upq-actions}
In the rest of this work we will consider the case of $G = \wtU(p,q)$. Hence, we assume that there is an analytic isometric $\wtU(p,q)$-action on an analytic finite volume complete pseudo-Riemannian manifold $M$ so that there is a dense orbit for the group $[G,G] = \wtSU(p,q)$, and so that the $Z(\wtU(p,q))_0$-action is non-trivial and so locally free on a non-empty subset of $M$. In particular, the results from Section~\ref{sec:Kill-G-actions} apply for this setup. We also assume that $\dim M \leq \dim\U(p,q) + 2n$, with $p,q \geq 1$ and $n = p + q \geq 3$. Any non-zero element in $Z(\uni(p,q))$ yields an $\wtSU(p,q)$-invariant analytic Killing vector field on $M$. In what follows, $Z_0$ will denote a fixed Killing field so obtained and we will denote its lift to $\widetilde{M}$ by the same symbol. In particular, from Lemma~\ref{lem:G(x)} it follows that $\ZZ\GG(x) = \R Z_0$ for every $x \in A$, where $A$ is given as in Proposition~\ref{prop:Dg(x)}.

Given the above assumptions, we observe that by Lemma~\ref{lem:TOoplusTOperp} we have $TM = T\OO \oplus T\OO^\perp$. Furthermore, if case~(1) of Proposition~\ref{prop:TOperp-integrable-q} holds, then Proposition~\ref{prop:TOperp-int-GtimesN} describes the structure of M as a $\wtSU(p,q)$-space. Hence, we need to study the properties of $M$ when case~(2) of Proposition~\ref{prop:TOperp-integrable-q} holds.

An important ingredient is given by the following result, which will be repeatedly used below for several pairs of Lie algebras. Its proof is an easy consequence of the Jacobi identity.

\begin{lemma}
    \label{lem:brackets-homomorphism}
    Let $\h$ be a Lie algebra considered also as an $\h_1$-module where $\h_1$ is a semisimple Lie subalgebra of $\h$ and the module structure is given by the adjoint representation of $\h$ restricted to $\h_1$. Then, the linear map $\wedge^2\h \rightarrow \h$ defined by the Lie brackets is a homomorphism of $\h_1$-modules. As a consequence, for every pair $V_1, V_2$ of $\h_1$-submodules of $\h$, the $\h_1$-submodule $[V_1, V_2]$ $\mathrm{(}[V_1, V_1]$$\mathrm{)}$ is an $\h_1$-module quotient of the $\h_1$-module $V_1 \otimes V_2$ $\mathrm{(}$$\wedge^2 V_1$, respectively$\mathrm{)}$. In particular, $[V_1, V_2]$ $\mathrm{(}[V_1, V_1]$$\mathrm{)}$ lies in a sum of irreducible $\h_1$-submodules of $\h$ isomorphic to those appearing in the decomposition into irreducible submodules of the $\h_1$-module $V_1 \otimes V_2$
    $\mathrm{(}$$\wedge^2 V_1$, respectively$\mathrm{)}$. Furthermore, such sum can be taken to
    containg each irreducible $\h_1$-submodule at most as many times as it appears in the
    $\h_1$-module $V_1 \otimes V_2$ $\mathrm{(}$$\wedge^2 V_1$, respectively$\mathrm{)}$.
\end{lemma}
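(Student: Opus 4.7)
The plan is to derive everything from the Jacobi identity together with Weyl's complete reducibility theorem, which applies since $\h_1$ is semisimple.

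First I would verify directly that the bracket map $\mu \colon \wedge^2 \h \to \h$, $Y \wedge Z \mapsto [Y, Z]$, is $\h_1$-equivariant. The $\h_1$-module structure on $\wedge^2 \h$ is induced from the adjoint action by the Leibniz rule: for $X \in \h_1$ and $Y, Z \in \h$, we have $X \cdot (Y \wedge Z) = [X,Y] \wedge Z + Y \wedge [X,Z]$. Applying $\mu$ and invoking the Jacobi identity,
\[
\mu(X \cdot (Y \wedge Z)) = [[X,Y],Z] + [Y,[X,Z]] = [X,[Y,Z]] = X \cdot \mu(Y \wedge Z),
\]
which is the claimed equivariance.

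Next, given $\h_1$-submodules $V_1, V_2 \subset \h$, the tensor product $V_1 \otimes V_2$ carries a natural $\h_1$-module structure, and the bilinear bracket restricts to an $\h_1$-equivariant linear map $V_1 \otimes V_2 \to \h$ whose image is precisely $[V_1, V_2]$; by the first isomorphism theorem $[V_1, V_2]$ is isomorphic to a quotient of the $\h_1$-module $V_1 \otimes V_2$. When $V_1 = V_2$, antisymmetry of $[\cdot,\cdot]$ lets us factor this map through $\wedge^2 V_1$, so that $[V_1, V_1]$ is realised as an $\h_1$-module quotient of $\wedge^2 V_1$.

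Finally I would invoke Weyl's theorem on complete reducibility: since $\h_1$ is semisimple, every finite-dimensional $\h_1$-module decomposes as a direct sum of irreducible submodules, so any quotient is isomorphic to a direct summand of the original. Consequently, the isotypic components of $[V_1, V_2]$ (respectively $[V_1, V_1]$) occur among those of $V_1 \otimes V_2$ (respectively $\wedge^2 V_1$), with multiplicities bounded above by the corresponding multiplicities in the original module. This yields both the list of possible irreducible summands and the multiplicity bound. No substantive obstacle arises here; the only implicit hypothesis is the finite-dimensionality of the $\h_1$-modules in question, which holds in all the applications made later in the paper.
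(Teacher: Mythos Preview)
Your proof is correct and follows exactly the line the paper indicates: the paper simply remarks that the lemma ``is an easy consequence of the Jacobi identity,'' and you have spelled out precisely that argument together with the appeal to complete reducibility needed for the multiplicity statement.
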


For our setup, we have that $\GG(x) \simeq \uni(p,q)$ and $\DD\GG(x) \simeq \su(p,q)$ as Lie algebras. Thus, as remarked in Section~\ref{sec:Kill-G-actions} we will describe the isomorphism types of modules over $\GG(x)$ or $\DD\GG(x)$ in terms of known modules over $\uni(p,q)$ or $\su(p,q)$, respectively.

\begin{lemma}
    \label{lem:TOperpchoices}
    For the $\wtU(p,q)$-action on $M$ as above, assume that case (2) from Proposition~\ref{prop:TOperp-integrable-q} holds and let $A_0 \subset \widetilde{M}$ be given as in such case. Let $\VV(x)$ be a $\GG(x)$-submodule as given by Proposition~\ref{prop:HHwithVV(x)}. Then, for every $x \in A_0$ the next properties are satisfied.
    \begin{enumerate}
        \item If $(p,q) \not= (2,2)$, then $\dim M = \dim \SU(p,q) + 2n + 1$ and $\VV(x) \simeq \C^{p,q}_\R$ as $\DD\GG(x)$-modules. In particular, $T_x\OO^\perp \simeq \C^{p,q}_\R \oplus \R$ as $\DD\GG(x)$-modules and $\so(T_x\OO^\perp)$ is isomorphic as a Lie algebra to either $\so(2p,2q+1)$ or $\so(2p+1,2q)$.
        \item If $(p,q) = (2,2)$, one of the following holds, where $\R^k$ denotes the $k$-dimensional trivial module.
            \begin{enumerate}
                \item For some $k \in \{1,2,3\}$, $\dim M = \dim \SU(2,2) + 6 + k$ and $T_x\OO^\perp \simeq \R^{4,2}\oplus\R^k$ as $\DD\GG(x)$-modules.
                \item $\dim M = \dim \SU(2,2) + 9$ and $T_x\OO^\perp \simeq \C^{2,2}_\R \oplus \R$ as $\DD\GG(x)$-modules.
            \end{enumerate}
        \end{enumerate}
\end{lemma}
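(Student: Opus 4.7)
The plan is to combine three ingredients already in place: the decomposition of $T_x\OO^\perp$ given by Proposition~\ref{prop:HHwithVV(x)}, the $\DD\GG(x)$-equivariant surjectivity of $\Omega_x$ onto $\DD(\g)=\su(p,q)$ coming from Proposition~\ref{prop:Omega-intertwining}(2) and Proposition~\ref{prop:TOperp-integrable-q}(2)(a), and the representation theory of $\su(p,q)$ (from the Appendix). The dimensional input is immediate: since $\dim\OO = \dim\SU(p,q) = n^2-1$, the assumption $\dim M \leq n(n+2)$ yields $\dim T_x\OO^\perp \leq 2n+1$. Also, Proposition~\ref{prop:HHwithVV(x)} and the direct sum $\HH = \GG(x)\oplus\HH_0(x)\oplus\VV(x)$ force $ev_x|_{\VV(x)}$ to be injective, so $\VV(x)\simeq ev_x(\VV(x))$ as $\GG(x)$-modules and
\[
T_x\OO^\perp = \ZZ_x \oplus ev_x(\VV(x)),
\]
with $\ZZ_x = \R Z_0(x)$ a trivial $\DD\GG(x)$-submodule of dimension $1$.

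Next I would view $\Omega_x : \wedge^2 T_x\OO^\perp \rightarrow \su(p,q)$ as a non-zero $\DD\GG(x)$-equivariant surjection onto the adjoint representation. By Weyl's complete reducibility, $\wedge^2 T_x\OO^\perp$ must then contain a copy of the adjoint as a submodule, which in particular rules out $T_x\OO^\perp$ being a trivial $\DD\GG(x)$-module (recovering Proposition~\ref{prop:TOperp-integrable-q}(2)(b)). The crucial representation-theoretic step is then to invoke the classification of real irreducible $\su(p,q)$-modules of dimension at most $2n+1$: for $n\geq 3$ these are only the trivial module and $\C^{p,q}_\R$ (dimension $2n$), except when $(p,q)=(2,2)$, where the exceptional isomorphism $\su(2,2)\simeq\so(4,2)$ yields an additional module $\R^{4,2}$ of dimension $6$. (Any other non-trivial irreducible has dimension at least $n^2-1 > 2n+1$ for $n\geq 3$, since $n^2-2n-2>0$ there.)

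Granted this list, the proof splits as in the statement. In the generic case $(p,q)\neq(2,2)$, decomposing $T_x\OO^\perp$ into irreducibles, the dimensional bound permits at most one copy of $\C^{p,q}_\R$, non-triviality forces exactly one such copy, and since $\C^{p,q}_\R$ has no invariant line the trivial summand $\ZZ_x$ must appear separately; hence $T_x\OO^\perp \simeq \C^{p,q}_\R\oplus\R$, $\VV(x)\simeq\C^{p,q}_\R$, and $\dim M = \dim\SU(p,q)+2n+1$. For $(p,q)=(2,2)$ the bound $\dim T_x\OO^\perp \leq 9$ prevents both two copies of $\R^{4,2}$ (dim $12$) and the simultaneous presence of $\R^{4,2}$ and $\C^{2,2}_\R$ (dim $14$), so exactly one non-trivial summand appears; appending trivial summands (with $\ZZ_x$ providing at least one) gives $T_x\OO^\perp\simeq\R^{4,2}\oplus\R^k$ with $k\in\{1,2,3\}$, or $T_x\OO^\perp\simeq\C^{2,2}_\R\oplus\R$, which are the two sub-cases (a) and (b).

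Finally, for the identification of $\so(T_x\OO^\perp)$ in case (1), I would use the Appendix fact that the real $\su(p,q)$-invariant symmetric bilinear forms on $\C^{p,q}_\R$ are, up to scalar, only the real part of the standard pseudo-Hermitian form of signature $(p,q)$, giving signature $(2p,2q)$. The orthogonal $1$-dimensional trivial summand contributes signature $(1,0)$ or $(0,1)$, so $T_x\OO^\perp$ has signature $(2p+1,2q)$ or $(2p,2q+1)$, and the Lie algebra $\so(T_x\OO^\perp)$ is correspondingly $\so(2p+1,2q)$ or $\so(2p,2q+1)$. The main obstacle in the whole argument is the representation-theoretic input: making precise and citing the exact classification of small-dimensional real irreducible $\su(p,q)$-modules and the uniqueness of the invariant form on $\C^{p,q}_\R$, and being careful with the exceptional case $(p,q)=(2,2)$ where $\so(4,2)$ enters through an isomorphism of Lie algebras rather than as a coincidence of dimensions.
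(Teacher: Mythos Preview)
Your proof is correct and follows essentially the same route as the paper's: the decomposition $T_x\OO^\perp=\ZZ_x\oplus ev_x(\VV(x))$ from Proposition~\ref{prop:HHwithVV(x)}, the bound $\dim T_x\OO^\perp\leq 2n+1$, non-triviality of the $\DD\GG(x)$-action from case~(2) of Proposition~\ref{prop:TOperp-integrable-q}, and the classification in Lemma~\ref{lem:lowest-dim}; you even spell out the $\so(T_x\OO^\perp)$ identification via Lemma~\ref{lem:Cpq-scalar}, which the paper's terse proof leaves implicit (it is redone in the proof of Lemma~\ref{lem:H0(x)-cases}). One small correction: your parenthetical that ``any other non-trivial irreducible has dimension at least $n^2-1$'' is not accurate (for $n\geq 5$ the module $(\wedge^2\C^n)_\R$ has real dimension $n(n-1)<n^2-1$); Lemma~\ref{lem:lowest-dim} only asserts dimension $>2n+1$, which is exactly what you use, so the argument is unaffected.
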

\begin{proof}
    Let us consider the decomposition into $\DD\GG(x)$-submodules
    \[
        \HH = \GG(x) \oplus \HH_0(x) \oplus \VV(x)
    \]
    given by Proposition~\ref{prop:HHwithVV(x)}. Also by this result we have that $ev_x(\ZZ\GG(x)) = \R Z_0(x)$ is a $1$-dimensional $\DD\GG(x)$-submodule (and so trivial) of $T_x\OO^\perp$ because $x \in A_0 \subset S\cap U$. Since $\DD\GG(x) \simeq \su(p,q)$, Lemma~\ref{lem:lowest-dim} implies that $T_x\OO^\perp$ contains a $\DD\GG(x)$-submodule isomorphic to either $\C^{p,q}_\R$ or $\R^{4,2}$ when $(p,q) = (2,2)$. This completes the proof of (2). To obtain (1), it remains to show that $\VV(x) \simeq \C^{p,q}_\R$ as $\DD\GG(x)$-modules, but this follows from the fact that, by Proposition~\ref{prop:HHwithVV(x)}, $ev_x$ maps $\VV(x)$ onto a submodule of $T_x\OO^\perp$ complementary to $\R Z_0(x)$.
\end{proof}

To describe the centralizer $\HH$ it remains to consider the possibilities for $\HH_0(x)$.

\begin{lemma}
    \label{lem:H0(x)-cases}
    For the $\wtU(p,q)$-action on $M$ as above, assume that $(p,q) \not= (2,2)$, that case (2) from Proposition~\ref{prop:TOperp-integrable-q} holds and let $A_0 \subset \widetilde{M}$ be given as in such case. Then, for every $x \in A_0$ one of the following holds.
    \begin{enumerate}
        \item $\HH_0(x) = 0$.
        \item $\HH_0(x) \simeq \R$ and $[\HH_0(x), \VV(x)] = \VV(x)$.
    \end{enumerate}
\end{lemma}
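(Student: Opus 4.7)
The plan is to exploit the injective $\GG(x)$-equivariant embedding $\lambda_x^\perp : \HH_0(x) \hookrightarrow \so(T_x\OO^\perp)$ from Proposition~\ref{prop:H0(x)-lambda^perp}, together with the bracket constraint $[\lambda_x^\perp(\HH_0(x)), \so(T_x\OO^\perp)] \subset \ker(\Omega_x)$ from Proposition~\ref{prop:TOperp-integrable-q}(2)(c), to force $\lambda_x^\perp(\HH_0(x))$ into the one-dimensional trivial $\GG(x)$-submodule of $\so(T_x\OO^\perp)$ arising from the center of $\uni(p,q)$. This will produce the dichotomy $\HH_0(x) = 0$ or $\dim \HH_0(x) = 1$, and the identity $[\HH_0(x),\VV(x)] = \VV(x)$ in the latter case will follow from Schur's lemma applied to the $\GG(x)$-module $\HH$ together with a direct bracket computation at $x$.

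The first step would be to decompose $\so(T_x\OO^\perp)$ as a $\GG(x)$-module. By Lemma~\ref{lem:TOperpchoices}(1), $T_x\OO^\perp \simeq E \oplus \R$ as $\GG(x)$-modules, where $E = \C^{p,q}_\R$ and the $\R$ factor is trivial; hence $\so(T_x\OO^\perp) = \so(E) \oplus E$. Combining the symmetric-pair decomposition $\so(E) = \uni(p,q) \oplus \mathfrak{m}$ with $\uni(p,q) = \su(p,q) \oplus Z$ (where $Z$ denotes the one-dimensional center) yields
\[
    \so(T_x\OO^\perp) = \su(p,q) \oplus Z \oplus \mathfrak{m} \oplus E
\]
as a sum of four pairwise non-isomorphic irreducible $\GG(x)$-submodules. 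Since $\Omega_x$ is $\GG(x)$-equivariant and surjective onto $\DD(\g) = \su(p,q)$ by Propositions~\ref{prop:TOperp-integrable-q}(2)(a) and \ref{prop:Omega-intertwining}(2), and $\su(p,q)$ has multiplicity one on the left, Schur's lemma gives $\ker(\Omega_x) = Z \oplus \mathfrak{m} \oplus E$.

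Next, for any irreducible $\GG(x)$-submodule $L \subset \lambda_x^\perp(\HH_0(x))$, the constraint $[L,L] \subset \ker(\Omega_x)$ must hold. Direct computation in $\so(T_x\OO^\perp)$ shows that each of $[\su(p,q),\su(p,q)] = \su(p,q)$, $[\mathfrak{m},\mathfrak{m}] \supset \su(p,q)$ (by the irreducible symmetric-pair structure of $(\so(E),\uni(p,q))$), and $[E,E] = \so(E) \supset \su(p,q)$ (via the standard identification $\wedge^2 E \simeq \so(E)$) has nonzero $\su(p,q)$-component. Hence $L = Z$ is the only admissible option, and since $Z$ has multiplicity one we obtain $\dim\HH_0(x) \leq 1$. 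If $\HH_0(x) \simeq \R$, choose a nonzero $Z_1 \in \HH_0(x)$; then $\lambda_x^\perp(Z_1)$ is a nonzero multiple of the central element $J$ of $\uni(p,q)$, which acts on $ev_x(\VV(x)) \simeq E$ as the complex structure and hence as an isomorphism. Consequently $ev_x([Z_1,W]) = \lambda_x(Z_1)(W_x)$ is nonzero for every $W \in \VV(x)$ with $W_x \neq 0$, so $[\HH_0(x),\VV(x)] \neq 0$. By Lemma~\ref{lem:brackets-homomorphism}, $[\HH_0(x),\VV(x)]$ is a $\GG(x)$-submodule of $\HH$ and a quotient of $\HH_0(x) \otimes \VV(x) \simeq \VV(x)$, hence isomorphic to $\VV(x) \simeq \C^{p,q}_\R$. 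Since neither $\GG(x) = \su(p,q) \oplus Z$ nor $\HH_0(x)$ in the decomposition $\HH = \GG(x) \oplus \HH_0(x) \oplus \VV(x)$ contains any copy of $\VV(x)$, Schur's lemma places $[\HH_0(x),\VV(x)] \subset \VV(x)$, and irreducibility with nonvanishing then forces $[\HH_0(x),\VV(x)] = \VV(x)$.

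The main technical point will be establishing the pairwise non-isomorphism of $\su(p,q), Z, \mathfrak{m}, E$ as $\GG(x)$-modules, needed both for the identification of $\ker(\Omega_x)$ and for the Schur-lemma step involving $\VV(x)$. Real dimensions settle this whenever $n \geq 4$, but in the low-rank case $n = 3$ one has the accidental coincidence $\dim\mathfrak{m} = \dim E = 6$. The resolution is to track the weight of the central element of $\uni(p,q)$: a straightforward computation with the relation $XJ = -JX$ for $X \in \mathfrak{m}$ gives $[J, X] = 2JX$, so the center acts on $E$ with eigenvalues $\pm i$ but on $\mathfrak{m}$ with eigenvalues $\pm 2i$, which keeps the two modules distinct as $\GG(x)$-modules even when they coincide as $\DD\GG(x)$-modules.
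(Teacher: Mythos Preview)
Your overall strategy matches the paper's: embed $\HH_0(x)$ via $\lambda_x^\perp$ into $\so(T_x\OO^\perp)$, decompose the target as $\su(p,q)\oplus Z\oplus\mathfrak{m}\oplus E$ with $\mathfrak{m}=(\wedge^2\C^n)_\R$ and $E=\C^{p,q}_\R$, identify $\ker(\Omega_x)=Z\oplus\mathfrak{m}\oplus E$, and use the bracket constraint to force the image into $Z$. Your treatment of the case $\HH_0(x)\simeq\R$ is also fine and in fact slightly more explicit than the paper's.

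There is, however, a genuine gap in your handling of $n=3$. You assert that the four summands are pairwise non-isomorphic \emph{as $\GG(x)$-modules}, and you propose to separate $\mathfrak{m}$ from $E$ by the eigenvalues of the central element $J\in\uni(p,q)\subset\so(T_x\OO^\perp)$. But the $\GG(x)$-module structure on $\so(T_x\OO^\perp)$ is defined through $\lambda_x^\perp$, and $\lambda_x^\perp(\ZZ\GG(x))=0$: for $Z^*\in\ZZ\GG(x)$ and $V\in\HH$ one has $[Z^*,V]=0$ by Lemma~\ref{lem:G(x)}(2), so $\lambda_x(Z^*)=0$. Thus the $\GG(x)$-action factors through $\DD\GG(x)\simeq\su(p,q)$, and the element $J$ you invoke is \emph{not} in $\lambda_x^\perp(\GG(x))$. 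Consequently, for $n=3$ (where $(\wedge^2\C^3)_\R\simeq\C^{p,q}_\R$ as $\su(p,q)$-modules) there exist diagonal irreducible $\su(p,q)$-submodules $L\subset\mathfrak{m}\oplus E$ which your $[L,L]$ test has not ruled out.

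The fix is to use the full constraint $[\lambda_x^\perp(\HH_0(x)),\so(T_x\OO^\perp)]\subset\ker(\Omega_x)$, not just $[L,L]$. First, bracketing with the $\su(p,q)$-summand forces any irreducible $L\subset\lambda_x^\perp(\HH_0(x))$ to lie in $\ker(\Omega_x)=Z\oplus\mathfrak{m}\oplus E$. If $L$ is nontrivial, then $L\subset\mathfrak{m}\oplus E$. Now bracket with $E$: since $[\mathfrak m,E]\subset E$ while $[E,E]=\so(E)\supset\su(p,q)$ (Lemma~\ref{lem:su-so-decomp-psi}), any nonzero projection of $L$ to $E$ produces a nonzero $\su(p,q)$-component in $[L,E]$, contradicting the constraint. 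Hence $L\subset\mathfrak m$, so $L=\mathfrak m$; but then $[\mathfrak m,\mathfrak m]=\uni(p,q)\supset\su(p,q)$ (Lemma~\ref{lem:su-so-decomp-phi}) gives the same contradiction. This is what the paper means by ``the bracket identities from Lemmas~\ref{lem:su-so-decomp-phi} and \ref{lem:su-so-decomp-psi}'', and it works uniformly for all $n\geq 3$.
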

\begin{proof}
    By Propositions~\ref{prop:H0(x)-lambda^perp} and \ref{prop:Omega-intertwining}, the Lie subalgebra $\HH_0(x)$ is completely determined by its image $\lambda_x^\perp$ so that $\lambda_x^\perp(\HH_0(x))$ is a Lie subalgebra and a $\lambda_x^\perp(\DD\GG(x))$-submodule of $\so(T_x\OO^\perp)$ so that
    \[
        [\lambda_x^\perp(\HH_0(x)), \so(T_x\OO^\perp)] \subset \ker(\Omega_x).
    \]

    On the other hand, by Lemma~\ref{lem:TOperpchoices} we know that $\VV(x) \simeq \C^{p,q}_\R$, as $\DD\GG(x)$-modules, and by Lemma~\ref{lem:Cpq-scalar} the scalar product on $ev_x(\VV(x))$ inherited from $T_x\OO^\perp$ is (a nonzero multiple of) the canonical scalar product in $\C^{p,q}_\R$ which has signature $(2p,2q)$. The orthogonal complement of $ev_x(\VV(x))$ in $T_x\OO^\perp$ is necessarily $\R Z_0(x)$ and so the latter is nonnull for the scalar product of $T_x\OO^\perp$. This proves again the part of Lemma~\ref{lem:TOperpchoices}(1) which says that $\so(T_x\OO^\perp)$ is isomorphic to either $\so(2p,2q+1)$ or $\so(2p+1,2q)$. Furthermore, since this conclusion is obtained from the $\DD\GG(x)$-module structure, it also shows that we can assume that the embedding $\lambda_x^\perp : \DD\GG(x) \hookrightarrow \so(T_x\OO^\perp)$ is equivalent to the restriction to $\su(p,q)$ of one of the embeddings in Equation~\eqref{eq:su-so2}.

    Hence, Lemma~\ref{lem:su-so-decomp-psi} provides the $\DD\GG(x)$-module and Lie algebra structure of $\so(T_x\OO^\perp)$. From this we see that $\ker(\Omega_x)$ is the sum of the submodules complementary to $\lambda_x^\perp(\DD\GG(x))$. In particular, we have
    \[
        \ker(\Omega_x) \simeq \R \oplus (\wedge^2\C^n)_\R \oplus \C^{p,q}_\R
    \]
    Considering the above restrictions on $\lambda_x^\perp(\HH_0(x))$ and the bracket identities from Lemmas~\ref{lem:su-so-decomp-phi} and \ref{lem:su-so-decomp-psi}, we conclude that the only possibilities for $\HH_0(x)$ are to be $0$ or $1$-dimensional.

    Let us suppose that $\HH_0(x)$ is $1$-dimensional. Since $\HH_0(x) \otimes \VV(x) \simeq \VV(x)$ as $\DD\GG(x)$-modules, by Lemma~\ref{lem:brackets-homomorphism} it follows that $[\HH_0(x), \VV(x)] \subset \VV(x)$. If the latter inclusion is proper, then the irreducibility of $\VV(x)$ implies that $[\HH_0(x), \VV(x)] = 0$. Hence, the fact that $T_x\OO^\perp = ev_x(\ZZ\GG(x) \oplus \VV(x))$ (see Proposition~\ref{prop:HHwithVV(x)}) together with the definition of $\lambda_x^\perp$ and its injectivity on $\HH_0(x)$ imply that $\HH_0(x) = 0$. This contradiction shows that $[\HH_0(x), \VV(x)] = \VV(x)$.
\end{proof}

We now describe the structure of the centralizer $\HH$ using Lemma~\ref{lem:H0(x)-cases}.

\begin{proposition}
    \label{prop:HH-structure-nonintTOperp}
    For the $\wtU(p,q)$-action on $M$ as above, assume that $(p,q) \not= (2,2)$, that case (2) from Proposition~\ref{prop:TOperp-integrable-q} holds and let $A_0 \subset \widetilde{M}$ be given as in such case. Then, for every $x \in A_0$ we have $\rad(\HH) = \ZZ\GG(x)$, $\HH_0(x) \simeq \R$ and there exists a $1$-dimensional $\DD\GG(x)$-submodule $\LL$ of $\ZZ\GG(x) \oplus \HH_0(x)$ such that
    \begin{enumerate}
        \item $\LL \not= \ZZ\GG(x)$.
        \item $(\mS, \DD\GG(x) \oplus \LL)$ is a symmetric pair equivalent to one of the symmetric pairs $(\su(p,q+1), \uni(p,q))$ or $(\su(p+1,q), \uni(p,q))$.
    \end{enumerate}
    In particular, $\HH$ is isomorphic to either $\uni(p,q+1)$ or $\uni(p+1,q)$.
\end{proposition}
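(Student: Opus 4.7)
The plan is to exploit the $\DD\GG(x)$-module decomposition
\[
\HH = \DD\GG(x) \oplus \bigl(\ZZ\GG(x) \oplus \HH_0(x)\bigr) \oplus \VV(x)
\]
from Proposition~\ref{prop:HHwithVV(x)} and Lemma~\ref{lem:TOperpchoices}, where the first summand is of $\su(p,q)$-isotype, the second is trivial, and the third is of isotype $\C^{p,q}_\R$. First, I would apply Lemma~\ref{lem:brackets-homomorphism} to the $\DD\GG(x)$-equivariant bracket map $\wedge^2_\R \VV(x) \to \HH$: the decomposition of $\wedge^2_\R \C^{p,q}_\R$ as an $\su(p,q)$-module contains one copy of $\su(p,q)$, one trivial summand (together forming a copy of $\uni(p,q)$), and the real form of $\wedge^2 \C^{p,q}$, and only the first two isotypes appear in $\HH$. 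Hence $[\VV(x),\VV(x)] \subset \DD\GG(x) \oplus (\ZZ\GG(x) \oplus \HH_0(x))$, and I denote by $\LL$ the $1$-dimensional trivial-type projection of $[\VV(x),\VV(x)]$ in $\ZZ\GG(x) \oplus \HH_0(x)$.

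The next step is to show that both projections of this bracket are nonzero and, crucially, that $\LL \neq \ZZ\GG(x)$. For the $\DD\GG(x)$-projection: writing $[V_1,V_2] = A + B + C$ with $A \in \DD\GG(x)$, $B \in \ZZ\GG(x) \oplus \HH_0(x)$, $C \in \VV(x)$, evaluation at $x$ kills the $\HH_0(x)$-part of $B$ and sends the rest into $T_x\OO^\perp$ apart from $ev_x(A) \in T_x\OO$, so $A = \widehat{\rho}_x(\omega_x([V_1,V_2]_x))$. Splitting $V_i = V_i^\top + V_i^\perp$ orthogonally with $V_i^\top(x)=0$ and using $\Omega_x(V_1(x), V_2(x)) = -\omega_x([V_1^\perp,V_2^\perp]_x)$, this relates the $\DD\GG(x)$-projection to $\Omega_x$ up to correction terms from the cross brackets $[V_i^\top, V_j^\perp]_x$. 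Combining surjectivity of $\Omega_x$ on $\wedge^2 T_x\OO^\perp$ from Proposition~\ref{prop:TOperp-integrable-q}(2)(a) with the vanishing of $\Omega_x$ on $ev_x(\VV(x)) \otimes \ZZ_x$ (of isotype $\C^{p,q}_\R$, non-isomorphic to $\DD(\g) \simeq \su(p,q)$) yields surjectivity of $\Omega_x|_{\wedge^2 ev_x(\VV(x))}$, which in turn forces the $\DD\GG(x)$-projection of $[\VV(x), \VV(x)]$ to be all of $\DD\GG(x)$ by simplicity.

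For $\LL \neq \ZZ\GG(x)$, I would invoke the Jacobi identity on triples in $\VV(x)$. Since $\ZZ\GG(x)$ is central in $\HH$ by Lemma~\ref{lem:G(x)}(2), it acts trivially on $\VV(x)$; if $\LL$ were equal to $\ZZ\GG(x)$, then reducing modulo $\ZZ\GG(x)$ would yield a Lie algebra structure on $\DD\GG(x) \oplus \VV(x)$ with a nonzero $\su(p,q)$-equivariant bracket $\wedge^2_\R \VV(x) \to \DD\GG(x)$. Such a bracket is unique up to scalar (the $\su(p,q)$-isotype has multiplicity one in $\wedge^2_\R \C^{p,q}_\R$) and is the projection of the $\su(p,q+1)$-bracket; in $\su(p,q+1)$ the Jacobi identity only closes once the accompanying $\R$-center of $\uni(p,q) \subset \su(p,q+1)$, which acts nontrivially on $\C^{p,q}$, is also included, so dropping it makes Jacobi fail. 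Hence $\LL \neq \ZZ\GG(x)$, which rules out $\HH_0(x) = 0$ and forces $\HH_0(x) \simeq \R$ by Lemma~\ref{lem:H0(x)-cases}; moreover $\DD\GG(x) \oplus \LL \simeq \uni(p,q)$ and $\LL$ acts nontrivially on $\VV(x)$.

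With $\LL$ in hand, $\mS := \DD\GG(x) \oplus \LL \oplus \VV(x)$ is a $\mathbb{Z}/2$-graded Lie subalgebra of $\HH$ with even part $\DD\GG(x) \oplus \LL \simeq \uni(p,q)$ and odd part $\VV(x) \simeq \C^{p,q}_\R$, hence a real irreducible symmetric pair with $\uni(p,q)$-isotropy and $\C^{p,q}_\R$-complement; the classification of such pairs identifies $\mS$ as $\su(p,q+1)$ or $\su(p+1,q)$, the choice being determined by the signature of the induced scalar product on $\VV(x)$. Finally, $\HH = \mS \oplus \ZZ\GG(x)$ decomposes as the simple ideal $\mS$ plus the central line $\ZZ\GG(x)$, giving $\HH \simeq \uni(p,q+1)$ or $\uni(p+1,q)$ and $\rad(\HH) = \ZZ\GG(x)$. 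The hardest point will be bridging the geometric form $\Omega_x$ and the algebraic bracket in $\HH$ through the orthogonal decomposition $V_i = V_i^\top + V_i^\perp$: I expect the Killing property of the $V_i$ together with the intertwining relations of Proposition~\ref{prop:Omega-intertwining} to show that the cross-term contributions $[V_i^\top, V_j^\perp]_x$ do not obstruct the surjectivity conclusion, but this technical reduction is the main hurdle.
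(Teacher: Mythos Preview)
Your module-theoretic setup is sound, and your Jacobi argument that no $\mathbb{Z}/2$-graded Lie algebra with even part $\su(p,q)$ and odd part $\C^{p,q}_\R$ can exist is correct and essentially equivalent to the paper's appeal to Berger's tables. The genuine gap is exactly where you flag it: the bridge from the surjectivity of $\Omega_x|_{\wedge^2 ev_x(\VV(x))}$ to the nonvanishing of the $\DD\GG(x)$-projection of $[\VV(x),\VV(x)]$.

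Concretely, with $V_i = V_i^\top + V_i^\perp$ one has $\omega_x([V_1,V_2]_x) = -\Omega_x(V_1(x),V_2(x)) + (\text{cross terms})$, and all three pieces are $\DD\GG(x)$-equivariant antisymmetric maps $\wedge^2\VV(x) \to \DD(\g)$. Since $\su(p,q)$ has multiplicity one in $\wedge^2\C^{p,q}_\R$, this only forces $\omega_x([V_1,V_2]_x) = c\,\Omega_x(V_1(x),V_2(x))$ for some constant $c$; nothing you have written rules out $c=0$. The cross terms, being themselves a multiple of the same model map, could cancel the $\Omega_x$-contribution exactly, and neither the Killing property of the $V_i$ nor Proposition~\ref{prop:Omega-intertwining} controls the $1$-jet of $V_i^\top$ at $x$ in the way you would need. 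Without $c\neq 0$ you cannot exclude $[\VV(x),\VV(x)]\subset\ZZ\GG(x)\oplus\HH_0(x)$, i.e.\ $\VV(x)\subset\rad(\HH)$.

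The paper takes a different route and never attempts this bridge. It argues via the Levi decomposition of $\HH$: a Levi factor containing $\DD\GG(x)$ either contains $\VV(x)$, producing a nonexistent symmetric pair $(\mS,\DD\GG(x))$ with quotient $\C^{p,q}_\R$, or it does not, in which case $\VV(x)\subset\rad(\HH)$ and $\HH = \rad(\HH)\rtimes\DD\GG(x)$. The latter possibility---precisely the case your argument fails to exclude---is then eliminated \emph{geometrically}. One integrates $\HH$ to an isometric right action of a simply connected group $R\rtimes\wtSU(p,q)$ on $\widetilde{M}$ and checks that the orbit map $H_0\backslash(R\rtimes\wtSU(p,q))\to\widetilde{M}$ at $x$ is a local diffeomorphism carrying right $R$-cosets to integral submanifolds of $T\OO^\perp$ near $x$, since $ev_x(\rad(\HH)/\HH_0(x)) = T_x\OO^\perp$ and the right action commutes with $\wtSU(p,q)$ and hence preserves $T\OO^\perp$. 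This contradicts the assumed non-integrability of $T\OO^\perp$. Only then does the case $\rad(\HH)=\ZZ\GG(x)$ with $\HH_0(x)\simeq\R$ survive, after which the symmetric-pair identification proceeds essentially as in your final paragraph.
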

\begin{proof}
    Choose and fix an element $x \in A_0$.

    By Lemma~\ref{lem:H0(x)-cases}, the only possibilities for $\HH_0(x)$ is to be $0$ or $1$-dimensional. Let us consider first the case $\HH_0(x) = 0$.

    Let $\mS$ be a Levi factor that contains $\DD\GG(x)$. Hence, both $\mS$ and $\rad(\HH)$ are $\DD\GG(x)$-submodules and so they are given as a sum of the subspaces $\DD\GG(x)$, $\ZZ\GG(x)$ and $\VV(x)$. On the other hand, by Lemma~\ref{lem:G(x)} we have $[\ZZ\GG(x), \HH] = 0$ and so $\ZZ\GG(x) \subset \rad(\HH)$. In particular, we have
    \begin{align*}
        \DD\GG(x) &\subset \mS \subset \DD\GG(x) \oplus \VV(x) \\
        \ZZ\GG(x) &\subset \rad(\HH) \subset \ZZ\GG(x) \oplus \VV(x).
    \end{align*}
    Thus, the cases to consider are whether or not $\mS$ contains $\VV(x)$.

    Suppose that $\VV(x)$ is contained in $\mS$. Every ideal of $\mS$ is a $\DD\GG(x)$-submodule and so a sum of the submodules $\DD\GG(x)$ and $\VV(x)$. Since $[\DD\GG(x), \VV(x)] = \VV(x)$ any simple ideal that contains $\DD\GG(x)$ also contains $\VV(x)$. This implies that $\mS$ is a simple Lie algebra. On the other hand, by Lemmas~\ref{lem:brackets-homomorphism} and \ref{lem:wedgeCpqR} the $\DD\GG(x)$-submodule $[\VV(x), \VV(x)]$ is either $0$ or a sum of modules isomorphic to either $\R$, $\su(p,q)$ or $(\wedge^2 \C^n)_\R$. This implies that $[\VV(x), \VV(x)] \subset \DD\GG(x)$. Hence, $(\mS, \DD\GG(x))$ is a symmetric pair with $\DD\GG(x) \simeq \su(p,q)$ as Lie algebra and $\mS/\DD\GG(x) \simeq \C^{p,q}_\R$ as $\DD\GG(x)$-module. An inspection of Table~II from \cite{Berger} shows that no such symmetric pair exists. This contradiction shows that $\VV(x)$ is not contained in $\mS$ and so
    \begin{align*}
        \mS &= \DD\GG(x) \\
        \rad(\HH) &= \ZZ\GG(x) \oplus \VV(x),
    \end{align*}
    from which we obtain the decomposition
    \begin{equation}\label{eq:semidirectproduct}
        \HH = \rad(\HH) \rtimes \DD\GG(x).
    \end{equation}

    Now assume that $\HH_0(x) \simeq \R$.

    Let $\mS$ be a Levi factor of $\HH$ such that $\DD\GG(x) \subset \mS$. By Lemma~\ref{lem:G(x)} we know that $\ZZ\GG(x)$ is an Abelian ideal of $\HH$ and so it is contained in $\rad(\HH)$. Hence we have
    \begin{align*}
        \DD\GG(x) &\subset \mS \\
        \ZZ\GG(x) &\subset \rad(\HH) \subset \ZZ\GG(x) \oplus \HH_0(x) \oplus \VV(x),
    \end{align*}
    so that both $\mS$ and $\rad(\HH)$ are sums of irreducible $\DD\GG(x)$-submodules of $\HH$. The possibilities to consider are the following.
    \begin{itemize}
        \item $\rad(\HH) = \ZZ\GG(x) \oplus \HH_0(x)$.
        \item $\rad(\HH) = \ZZ\GG(x) \oplus \VV(x)$.
        \item $\rad(\HH) = \ZZ\GG(x) \oplus \HH_0(x) \oplus \VV(x)$.
        \item $\rad(\HH) = \ZZ\GG(x)$.
    \end{itemize}
    Note that one has to consider the cases where $\rad(\HH)$ contains a $1$-dimensional $\DD\GG(x)$-submodule $\LL$ of $\ZZ\GG(x) \oplus \HH_0(x)$ different from $\ZZ\GG(x)$. But if $\rad(\HH)$ contains such $\LL$, then it contains $\HH_0(x)$ as well since it already contains $\ZZ\GG(x)$. And so the above are indeed all the cases to consider.

    If $\rad(\HH) = \ZZ\GG(x) \oplus \HH_0(x)$, then we have $\mS = \DD\GG(x) \oplus \VV(x)$. As above this yields a contradiction. If $\rad(\HH) = \ZZ\GG(x) \oplus \VV(x)$, then we have $\mS = \DD\GG(x) \oplus \LL$ for some $1$-dimensional $\DD\GG(x)$-submodule $\LL$ of $\ZZ\GG(x) \oplus \HH_0(x)$. This yields a $1$-dimensional ideal $\LL$ of $\mS$ which is also a contradiction.

    For the case $\rad(\HH) = \ZZ\GG(x) \oplus \HH_0(x) \oplus \VV(x)$ we have $\mS = \DD\GG(x)$ and so the decomposition given by Equation~\eqref{eq:semidirectproduct} holds again. We will proceed to prove that the decomposition from Equation~\eqref{eq:semidirectproduct} yields a contradiction.

    Choose $R$ a simply connected Lie group whose Lie algebra is $\rad(\HH)$. Hence, the product space of the Lie groups $R$ and $\wtSU(p,q)$ has a semidirect Lie group structure whose Lie algebra is isomorphic to $\HH = \rad{\HH} \rtimes \DD\GG(x) \simeq \rad{\HH} \rtimes \su(p,q)$. We will denote with $R \rtimes \wtSU(p,q)$ this semidirect product. For this construction we have considered the isomorphism $\su(p,q) \simeq \DD\GG(x)$ given by $\widehat{\rho}_x$ from Lemma~\ref{lem:G(x)}. Let us denote by $\psi : \rad{\HH} \rtimes \su(p,q) \rightarrow \HH$ the isomorphism thus considered. Since $\HH \subset \Kill(\widetilde{M})$, by Lemma~1.11 from \cite{OQ-SO} (see also \cite{ONeill}) there exists an isometric right $R\rtimes \wtSU(p,q)$-action on $\widetilde{M}$ such that
    \[
        \psi(X) = X^*
    \]
    for every $X \in \rad(\HH) \rtimes \su(p,q)$. Note that for $X \in \rad(\HH) \rtimes \su(p,q)$ we denoted by $X^*$ the Killing field on $\widetilde{M}$ whose local flow is $(\exp(tX))_t$ by the right action of $R\rtimes \wtSU(p,q)$. Let $H_0$ be the connected subgroup of $R \rtimes \widetilde{SU}(p,q)$ whose Lie algebra is $\HH_0(x)$. Note that $H_0$ is a subgroup of the simply connected solvable subgroup $R$ and so it is closed in $R$ as well as in $R \rtimes \widetilde{SU}(p,q)$ (see Exercise D.4(vii) in Chapter~II from \cite{Helgason}). Consider the analytic map
    \begin{align*}
        f : H_0\backslash(R \rtimes \wtSU(p,q)) &\rightarrow \widetilde{M} \\
            H_0(r,g) &\mapsto x(r,g),
    \end{align*}
    which is clearly $R \rtimes \wtSU(p,q)$-equivariant. If we compute for $X \in \rad{\HH} \rtimes \su(p,q)$
    \begin{align*}
        df_{H_0(e,e)}(X + \HH_0(x)) &= \frac{d}{dt}\Big|_{t=0} x \exp(tX) \\
            &= X^*_x = ev_x(X^*) = ev_x(\psi(X))
    \end{align*}
    then we observe that $df_{H_0(e,e)}$ yields an isomorphism $\rad{\HH} \rtimes \su(p,q) \rightarrow T_{x_0}\widetilde{M}$ so that
    \begin{align*}
        df_{H_0(e,e)}(\su(p,q)) &= T_x\OO \\
        df_{H_0(e,e)}(\rad(\HH)/\HH_0(x)) &= ev_x(\rad(\HH)) = T_x\OO^\perp,
    \end{align*}
    which follows from the choice of $\psi$ and Proposition~\ref{prop:HHwithVV(x)}. This implies that $f$ is a local diffeomorphism at $H_0(e,e)$ and so everywhere by its $R \rtimes \wtSU(p,q)$-equivariance.

    Consider $H_0\backslash (R \rtimes e) \simeq H_0\backslash R$ and its image
    \[
        N = f(H_0\backslash (R \rtimes e)).
    \]
    In particular, $N$ is a submanifold of $\widetilde{M}$ in a neighborhood of $x$. We observe that we have
    \[
        T_x N = df_{H_0(e,e)}(\HH_0(x)\backslash\rad(\HH)) = T_x\OO^\perp.
    \]
    Furthermore, if we denote by $R(r,g)$ the right translation by $(r,g)$ on the spaces $H_0\backslash(R \rtimes \wtSU(p,q))$ and $\widetilde{M}$, then the equivariance of $f$ implies that for $r$ in a neighborhood of $e \in R$ we have
    \begin{align*}
        T_{f(H_0(r,e)))} N &= df_{f(H_0(r,e))}(T_{H_0(r,e)} H_0\backslash R) \\
            &= df_{f(H_0(r,e))}(dR(r,e)_{H_0(e,e)}(T_{H_0(e,e)} H_0\backslash R)) \\
            &= dR(r,e)_{f(H_0(r,e))}\circ df_{f(H_0(e,e))} (T_{H_0(e,e)}H_0\backslash R) \\
            &= dR(r,e)_{F(H_0(r,e))}(df_x(\HH_0(x)\backslash\rad(\HH))) \\
            &= dR(r,e)_{f(H_0(r,e))}(T_x\OO^\perp) \\
            &= T_{f(H_0(r,e))} \OO^\perp.
    \end{align*}
    Note that we have used in the last identity that the right $R \rtimes \wtSU(p,q)$-action commutes with the $\wtSU(p,q)$-action and so leaves invariant both $T\OO$ and $T\OO^\perp$. This shows that in neighborhood of $x$ the space $N$ defines an integral submanifold of $T\OO^\perp$ passing through $x$. Using again that the right $R \rtimes \wtSU(p,q)$-action leaves invariant $T\OO^\perp$ we conclude that for every $(e,g) \in R \rtimes \wtSU(p,q)$ in a neighborhood of $(e,e)$ the manifold $N(e,g)$ is also an integral submanifold of $T\OO^\perp$. Hence, the set
    \[
        \bigcup_{(g,e) \in R \rtimes \wtSU(p,q)} N (g,e) = f\left(H_0\backslash (R \rtimes \wtSU(p,q))\right)
    \]
    contains an open neighborhood of $x$ covered by integral submanifolds of $T\OO^\perp$. Hence, $T\OO^\perp$ is integrable everywhere by analyticity. Since we are assuming case (2) from Proposition~\ref{prop:TOperp-integrable-q} we already had that $T\OO^\perp$ is not integrable and so this yields a contradiction.

    It remains to consider the case where both $\HH_0(x) \simeq \R$ and $\rad(\HH) = \ZZ\GG(x)$ hold, for which we have
    \[
        \mS = \DD\GG(x) \oplus \LL \oplus \VV(x)
    \]
    for some $1$-dimensional $\DD\GG(x)$-submodule of $\ZZ\GG(x) \oplus \HH_0(x)$. Also note that $\LL \not= \ZZ\GG(x)$ since $\ZZ\GG(x)$ cannot be contained in $\mS$.

    If $\mS_1$ is an ideal of $\mS$ that contains $\DD\GG(x)$, then $\VV(x) \subset \mS_1$ because of the identity $[\DD\GG(x), \VV(x)] = \VV(x)$. Thus $\mS_1$ has codimension at most $1$ in $\mS$ and so $\mS_1 = \mS$. This implies that $\mS$ is a simple Lie algebra. On the other hand, by Lemmas~\ref{lem:brackets-homomorphism} and \ref{lem:wedgeCpqR} we have that
    \[
        [\VV(x), \VV(x)] \subset \DD\GG(x) \oplus \LL
    \]
    because $\VV(x) \simeq \C^{p,q}_\R$ as $\DD\GG(x)$-modules. Hence, $(\mS, \DD\GG(x) \oplus \LL)$ is a symmetric pair with $\mS$ a simple Lie algebra and $\mS/(\DD\GG(x) \oplus \LL) \simeq \C^{p,q}_\R$ as module over the Lie subalgebra $\DD\GG(x) \oplus \LL \simeq \uni(p,q)$. By Table~II from \cite{Berger} it follows that the symmetric pair $(\mS, \DD\GG(x) \oplus \LL)$ is equivalent to one of the symmetric pairs $(\su(p,q+1), \uni(p,q))$ or $(\su(p+1,q), \uni(p,q))$. This completes the proof of our statement.
\end{proof}

\section{Proof of the Main Results}
\label{sec:proofmainthms}
In the rest of this section we will assume that the hypotheses of Theorem~\ref{thm:difftype} are satisfied. By Proposition~\ref{prop:TOperp-integrable-q} we have two cases to consider according to whether $T\OO^\perp$ is integrable or not. If $T\OO^\perp$ is integrable, then Proposition~\ref{prop:TOperp-int-GtimesN} proves that the conclusions (1) from Theorems~\ref{thm:difftype} and \ref{thm:metrictype} hold. On the other hand, Proposition~\ref{prop:GH0-module-structure}(4) implies that $\ZZ\GG(x) \subset T_x\OO^\perp$ at every $x \in A\cap U$ and by Lemma~\ref{lem:G(x)}(2) we conclude that $Z^*_x \in T_x \OO^\perp$ for every $Z \in Z(\g)$ and for almost all (and so all) $x \in \widetilde{M}$. This shows that the $Z(\wtU(p,q))_0$-action preserves the factor $N_1$, which implies that the conclusion (1) from Theorem~\ref{thm:actiontype} holds.

Hence, we can assume that $T\OO^\perp$ is not integrable and so that case~(2) from Proposition~\ref{prop:TOperp-integrable-q} holds. Thus, we can choose and fix $x_0 \in A_0$ for which the description of the structure of the centralizer $\HH$ provided by Proposition~\ref{prop:HH-structure-nonintTOperp} holds. We will follow the notation of the latter. In particular, $\LL$ denotes the $1$-dimensional subspace of $\HH$ obtained in Proposition~\ref{prop:HH-structure-nonintTOperp}. Also note that
\[
    \mS = [\HH, \HH], \quad \ZZ\GG(x_0) = \rad(\HH).
\]

We consider in the following subsections the two cases according to whether $\LL$ equals $\HH_0(x_0)$ or not. The next result will be used for both.

\begin{lemma}
    \label{lem:ZY-bundles}
    Suppose that Case (2) from Proposition~\ref{prop:TOperp-integrable-q} holds and let $x_0 \in A_0$ be the above chosen point. Then, the action of $Z(\wtU(p,q))_0$ on $M$ is locally free with non-degenerate orbits for the metric of $M$. Furthermore, if we denote by $\YY$ the orthogonal complement of $T\OO \oplus \ZZ$, then $\YY$ is a subbundle of $TM$ that is non-degenerate with respect to the metric of $M$ and it also  satisfies $T\OO^\perp = \ZZ \oplus \YY$ and $\YY_{x_0} = ev_{x_0}(\VV(x_0))$.
\end{lemma}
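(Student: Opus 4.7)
The plan is to first establish non-degeneracy of the $Z(\wtU(p,q))_0$-orbit at the single point $x_0$, bootstrap this to everywhere via Proposition~\ref{prop:Zx}, and then read off the identification $\YY_{x_0} = ev_{x_0}(\VV(x_0))$ from the orthogonality of the decomposition in Proposition~\ref{prop:HHwithVV(x)}.

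For the local step, Proposition~\ref{prop:HHwithVV(x)} gives the direct sum decomposition $T_{x_0}\OO^\perp = \ZZ_{x_0} \oplus ev_{x_0}(\VV(x_0))$ as $\DD\GG(x_0)$-submodules. By Lemma~\ref{lem:G(x)}(2) the summand $\ZZ_{x_0} = \R Z_0(x_0)$ is a trivial $\DD\GG(x_0)$-module, while by Lemma~\ref{lem:TOperpchoices}(1) (recall that here $(p,q)\neq(2,2)$) the summand $ev_{x_0}(\VV(x_0))$ is isomorphic to the non-trivial irreducible module $\C^{p,q}_\R$. Since the scalar product on $T_{x_0}\widetilde{M}$ is $\DD\GG(x_0)$-invariant (Lemma~\ref{lem:TxM-module-structure}(1)), the pairing between these two non-isomorphic irreducible $\DD\GG(x_0)$-modules must vanish by a Schur-type argument, so the decomposition above is orthogonal. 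As $T_{x_0}\OO^\perp$ is non-degenerate by Lemma~\ref{lem:TOoplusTOperp}, both orthogonal summands are non-degenerate; in particular the $Z(\wtU(p,q))_0$-orbit through $x_0$ is non-degenerate.

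Now I would project $x_0$ to $M$ (where the action remains locally free with a non-degenerate orbit, since the covering map is a local isometry) and apply Proposition~\ref{prop:Zx}, concluding that $Z(\wtU(p,q))_0$ acts locally freely everywhere on $M$ with non-degenerate orbits, and similarly on $\widetilde{M}$. Hence $\ZZ$ is an analytic non-degenerate subbundle of $T\OO^\perp$ (recall $\ZZ \subset T\OO^\perp$ from the opening paragraph of Section~\ref{sec:proofmainthms}). Defining $\YY$ as the orthogonal complement of $T\OO\oplus\ZZ$ in $TM$, the non-degeneracy of $T\OO$ (Lemma~\ref{lem:TOoplusTOperp}) together with $\ZZ\perp T\OO$ shows that $\YY$ is the orthogonal complement of $\ZZ$ inside the non-degenerate bundle $T\OO^\perp$; consequently $\YY$ is itself non-degenerate, is a smooth subbundle of $TM$, and satisfies $T\OO^\perp=\ZZ\oplus\YY$ as an orthogonal decomposition. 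Evaluating at $x_0$, the orthogonality of the decomposition $T_{x_0}\OO^\perp = \ZZ_{x_0} \oplus ev_{x_0}(\VV(x_0))$ established above identifies $ev_{x_0}(\VV(x_0))$ with the orthogonal complement of $\ZZ_{x_0}$ in $T_{x_0}\OO^\perp$, which is precisely $\YY_{x_0}$.

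The main obstacle is the orthogonality step justified by the Schur-type argument in the second paragraph: without it, one only has an algebraic direct sum inside a non-degenerate space, from which non-degeneracy of the individual summands does not follow. The isotypic separation of $\ZZ_{x_0}$ (trivial) from $ev_{x_0}(\VV(x_0))\simeq\C^{p,q}_\R$ (non-trivial irreducible of real dimension $2n$) under the $\DD\GG(x_0)$-invariant metric is therefore the crucial input, and it is also what makes the hypothesis $(p,q)\neq(2,2)$ enter the proof through Lemma~\ref{lem:TOperpchoices}.
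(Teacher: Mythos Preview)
Your proof is correct and follows essentially the same route as the paper's. The only cosmetic difference is in how the orthogonality of $\ZZ_{x_0}$ and $ev_{x_0}(\VV(x_0))$ is justified: the paper invokes Lemma~\ref{lem:Cpq-scalar} (the classification of $\su(p,q)$-invariant bilinear forms on $\C^{p,q}_\R$) together with the non-degeneracy of $T_{x_0}\OO^\perp$, whereas you argue directly via Schur that the invariant pairing between the trivial module and $\C^{p,q}_\R$ must vanish; both arguments encode the same isotypic separation and lead to the same conclusion.
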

\begin{proof}
    By Lemma~\ref{lem:TOperpchoices}(1) we have $T_{x_0} \OO^\perp \simeq \C^{p,q}_\R \oplus \R$ as $\DD\GG(x_0)$-modules. And so Proposition~\ref{prop:GH0-module-structure}(4) implies that the $\DD\GG(x_0)$-submodule of $T_{x_0}\OO^\perp$ isomorphic to $\R$ is necessarily $\ZZ_{x_0}$. Also, Lemma~\ref{lem:Cpq-scalar} together with the non-degeneracy of $T_{x_0}\OO^\perp$ imply that the scalar products on $\C^{p,q}_\R$ and $\R$ inherited from the isomorphism $T_{x_0} \OO^\perp \simeq \C^{p,q}_\R \oplus \R$ are both non-degenerate. In particular, the subspaces $\ZZ_{x_0}$ and $ev_{x_0}(\VV(x_0))$ are perpendicular and non-degenerate for the metric on $T_{x_0}\OO^\perp \subset T_{x_0}\widetilde{M}$. It follows from Proposition~\ref{prop:Zx} that the set $\ZZ$ of tangent spaces to the $Z(\wtU(p,q))_0$-orbits is an analytic line bundle with non-degenerate fibers everywhere. This proves the first claim, and the rest are now easy to conclude.
\end{proof}

In the rest of this section $\YY$ will denote the bundle defined in Lemma~\ref{lem:ZY-bundles}.

\subsection{Proof of Theorems~\ref{thm:difftype}, \ref{thm:metrictype} and \ref{thm:actiontype}: case $\LL \not= \HH_0(x_0)$.}
In this case the evaluation map $ev_{x_0} : \mS \rightarrow T_{x_0} \widetilde{M}$ is an isomorphism that maps realizing the following isomorphisms of $\DD\GG(x_0)$-modules
\[
    \DD\GG(x_0) \simeq T_{x_0} \OO, \quad
    \LL \simeq \ZZ_{x_0}, \quad
    \VV(x_0) \simeq \YY_{x_0}.
\]
Let us now denote by $\widetilde{S}$ a simply connected Lie group whose Lie algebra is $\mS$. By Lemma~1.11 from \cite{OQ-SO} (see also \cite{ONeill}) the geodesic completeness of $\widetilde{M}$ implies the existence of an isometric right $\widetilde{S}$-action on $\widetilde{M}$ such that for every $X \in \mS = Lie(\widetilde{S})$ the Killing vector field obtained by differentiating at $t=0$ the map
\[
    p \in \widetilde{M} \mapsto p\exp(tX)
\]
yields $X$ itself. Consider the $\widetilde{S}$-orbit map at $x_0$ given by
\begin{align*}
    \varphi : \widetilde{S} &\rightarrow \widetilde{M} \\
        \varphi(s) &= x_0 s.
\end{align*}
By the above remarks on $\mS$ we have
\[
    d\varphi_e(X) = ev_{x_0}(X)
\]
for every $X \in \mS$. Hence, the above mentioned properties of $ev_{x_0}$ imply that $d\varphi_e$ is an isomorphism. Since $\varphi$ is $\widetilde{S}$-equivariant for the right $\widetilde{S}$-action on itself, it follows that $\varphi$ is a local diffeomorphism.

The following result allows us to rescale the pseudo-Riemannian metric on $M$ to our needs.

\begin{lemma}
    \label{lem:Lnot=H0-rescaling}
    Assume that $\LL \not= \HH_0(x_0)$ and that the above notation holds. Let $h$ be the pseudo-Riemannian metric on $M$. Then, there exists a pseudo-Riemannian metric $\overline{h}$ on $M$ of the form
    \[
        \overline{h} = c_1 h|_{T\OO} \oplus c_2 h|_{\ZZ} \oplus c_3 h|_{\YY}.
    \]
    for some non-zero constants $c_1, c_2, c_3$ such that for $\mS$ endowed with the scalar product defined by its Killing form the map
    \[
        d\varphi_e = ev_{x_0} : \mS \rightarrow (T_{x_0}\widetilde{M}, \overline{h}_{x_0})
    \]
    is an isometry. Furthermore, the metric $\overline{h}$ is $\wtU(p,q)$-invariant on $M$ and its lift to $\widetilde{M}$ is $\widetilde{S}$-invariant.
\end{lemma}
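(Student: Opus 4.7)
The plan is to identify both sides --- $\overline{h}_{x_0}$ on $T_{x_0}\widetilde{M}$ for a suitable choice of the $c_i$, and the Killing form $B_\mS$ on $\mS$ pushed through $ev_{x_0}$ --- by exploiting the fact that each splits as an orthogonal sum over three matched summands $T_{x_0}\OO \leftrightarrow \DD\GG(x_0)$, $\ZZ_{x_0} \leftrightarrow \LL$, $\YY_{x_0} \leftrightarrow \VV(x_0)$, and that on each such summand the space of $\DD\GG(x_0)$-invariant symmetric bilinear forms is one-dimensional. The constants $c_1, c_2, c_3$ will then simply be the three proportionality factors between the corresponding blocks, and they will automatically be nonzero because both forms are nondegenerate on each block.

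First I would verify the orthogonality and nondegeneracy of the decomposition of $\mS$ with respect to $B_\mS$. The splitting $\mS = (\DD\GG(x_0) \oplus \LL) \oplus \VV(x_0)$ is the Cartan decomposition of the symmetric pair provided by Proposition~\ref{prop:HH-structure-nonintTOperp}, so the two large pieces are $B_\mS$-orthogonal and $B_\mS$ is nondegenerate on each. Inside $\DD\GG(x_0) \oplus \LL \simeq \uni(p,q)$, the subalgebra $\DD\GG(x_0)$ is perfect while $\LL$ is central in $\DD\GG(x_0) \oplus \LL$, so the invariance identity $B_\mS([A,B],C) = B_\mS(A,[B,C])$ forces $B_\mS(\DD\GG(x_0), \LL) = 0$; since $\LL$ is one-dimensional this already gives that $B_\mS|_{\DD\GG(x_0)}$ and $B_\mS|_\LL$ are each nondegenerate.

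On the tangent side, the orthogonality and nondegeneracy of $T_{x_0}\widetilde{M} = T_{x_0}\OO \oplus \ZZ_{x_0} \oplus \YY_{x_0}$ for $h_{x_0}$ is precisely Lemma~\ref{lem:ZY-bundles}. The map $ev_{x_0}$ carries the three summands of $\mS$ onto the corresponding summands: this is Lemma~\ref{lem:TxM-module-structure}(2) for $\DD\GG(x_0)$, the identity $ev_{x_0}(\LL) = \R Z_0(x_0) = \ZZ_{x_0}$ (valid because $\LL \ne \HH_0(x_0)$, so $\LL$ has nonzero projection onto $\ZZ\GG(x_0)$ while $ev_{x_0}$ kills $\HH_0(x_0)$), and $ev_{x_0}(\VV(x_0)) = \YY_{x_0}$ from Lemma~\ref{lem:ZY-bundles}. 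Because $\lambda_{x_0}(\DD\GG(x_0)) \subset \so(T_{x_0}\widetilde{M})$, the restriction of $h_{x_0}$ to each summand is $\DD\GG(x_0)$-invariant. Uniqueness of invariant forms then closes the identification: on $T_{x_0}\OO \simeq \DD(\g) = \su(p,q)$ (adjoint module) by simplicity, on the one-dimensional $\ZZ_{x_0}$ trivially, and on $\YY_{x_0} \simeq \C^{p,q}_\R$ by Lemma~\ref{lem:Cpq-scalar}. Choosing $c_1, c_2, c_3 \in \R^*$ to cancel these three proportionality factors makes $d\varphi_e = ev_{x_0}$ an isometry from $(\mS, B_\mS)$ to $(T_{x_0}\widetilde{M}, \overline{h}_{x_0})$.

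The final task is to upgrade this pointwise identification into the global invariance claims. The subbundles $T\OO$ and $\ZZ$ are $\wtU(p,q)$-invariant because $\wtSU(p,q)$ is normal in $\wtU(p,q)$ and $Z(\wtU(p,q))_0$ is central, respectively; hence $\YY$, the orthogonal complement of $T\OO \oplus \ZZ$ inside $T\widetilde{M}$, is also $\wtU(p,q)$-invariant, and rescaling each block by a constant preserves the $\wtU(p,q)$-invariance of the total metric. For the $\widetilde{S}$-invariance on $\widetilde{M}$, the generators of the right $\widetilde{S}$-action lie in $\mS \subset \HH \subset \mfKill(\widetilde{M}, Z(\wtU(p,q))_0)$, so they are Killing for $h$ and commute with both the $\wtSU(p,q)$- and the $Z(\wtU(p,q))_0$-actions; consequently the right $\widetilde{S}$-action preserves $h$, permutes the orbits of the two left actions, and therefore preserves each of $T\OO$, $\ZZ$ and $\YY$, hence also each block $c_i h|_\bullet$ and $\overline{h}$ itself. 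I expect the most delicate technical point to be the internal $B_\mS$-orthogonality $B_\mS(\DD\GG(x_0), \LL) = 0$ inside the reductive subalgebra $\DD\GG(x_0) \oplus \LL$, which requires $\LL \subset \mS$ --- exactly the hypothesis $\LL \ne \HH_0(x_0)$ of the present subsection.
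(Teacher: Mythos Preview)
Your proof is correct and follows essentially the same approach as the paper's: match the three orthogonal summands on each side via $ev_{x_0}$, invoke uniqueness of the $\DD\GG(x_0)$-invariant symmetric form on each block (Lemma~\ref{lem:Cpq-scalar} for $\YY_{x_0}\simeq\C^{p,q}_\R$, simplicity for $T_{x_0}\OO\simeq\su(p,q)$, triviality for the line $\ZZ_{x_0}$), and read off the three constants; the invariance claims then follow because both $\wtU(p,q)$ and $\widetilde{S}$ preserve $h$ and the splitting $T\OO\oplus\ZZ\oplus\YY$. One small correction to your closing remark: the inclusion $\LL\subset\mS$ holds in \emph{both} subcases of Section~\ref{sec:proofmainthms} by Proposition~\ref{prop:HH-structure-nonintTOperp}, so it is not what the hypothesis $\LL\ne\HH_0(x_0)$ buys; that hypothesis is used exactly where you invoked it earlier, namely to guarantee that $ev_{x_0}|_\mS$ is injective with $ev_{x_0}(\LL)=\ZZ_{x_0}$.
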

\begin{proof}
    From the above, $d\varphi_e = ev_{x_0}$ is a homomorphism of $\DD\GG(x_0)$-modules that maps isomorphically
    \[
        \DD\GG(x_0) \simeq T_{x_0}\OO, \quad
        \LL \simeq \ZZ_{x_0}, \quad
        \VV(x_0) \simeq \YY_{x_0}.
    \]
    Furthermore, by Lemma~\ref{lem:TxM-module-structure} the $\DD\GG(x_0)$-action preserves the metric on $T_{x_0}\widetilde{M}$. Consider the scalar products on $\DD\GG(x_0)$, $\LL$ and $\VV(x_0)$ given by
    \[
        d\varphi_e^*(h|_{T_{x_0}\OO}), \quad
        d\varphi_e^*(h|_{\ZZ_{x_0}}), \quad
        d\varphi_e^*(h|_{\YY_{x_0}}),
    \]
    respectively. These scalar products are $\DD\GG(x_0)$-invariant as a consequence of the $\DD\GG(x_0)$-equivariance of $d\varphi_e = ev_{x_0}$. On the other hand, it is well known that, up to a multiplicative constant, the only $\DD\GG(x_0)$-invariant scalar products on the three spaces $\DD\GG(x_0)$, $\LL$ and $\VV(x_0)$ are the respective restrictions of the Killing form of $\mS$ to them. For $\VV(x_0)$ this is Lemma~\ref{lem:Cpq-scalar}, for $\DD\GG(x_0)$ this follows from the fact that $\DD\GG(x_0) \simeq \su(p,q)$ as Lie algebras and for $\LL$ it is a trivial claim. Hence, there exist non-zero constants $c_1, c_2, c_3$ such that the Killing form of $\mS$ is given by
    \[
        c_1 d\varphi_e^*(h|_{T_{x_0}\OO}) \oplus c_2 d\varphi_e^*(h|_{\ZZ_{x_0}}) \oplus c_3 d\varphi_e^*(h|_{\ZZ_{x_0}}),
    \]
    with respect to the decomposition $\mS = \DD\GG(x_0) \oplus \LL \oplus \VV(x_0)$. Here we have used that these three spaces are orthogonal and non-degenerate with respect to the Killing form of $\mS$. If we define the metric $\overline{h}$ as in our statement with these constants, then the first claim follows. More precisely, the map
    \[
        d\varphi_e = ev_{x_0} : \mS \rightarrow (T_{x_0}\widetilde{M}, \overline{h}_{x_0})
    \]
    is an isometry.

    On the other hand, since both $h$ and the decomposition $TM = T\OO \oplus \ZZ \oplus \YY$ are $\wtU(p,q)$-invariant, it follows that $\overline{h}$ is $\wtU(p,q)$-invariant as well. And by the same reason, the metric $\overline{h}$ lifted to $\widetilde{M}$ is also $\widetilde{S}$-invariant.
\end{proof}

Let us denote by $h_{\mS}$ the bi-invariant pseudo-Riemannian metric on $\widetilde{S}$ whose value at $e$ is the Killing form of its Lie algebra $\mS$. Hence, Lemma~\ref{lem:Lnot=H0-rescaling} proves that the $\widetilde{S}$-orbit map defined before
\[
    \varphi : (\widetilde{S}, h_{\mS}) \rightarrow (\widetilde{M}, \overline{h})
\]
has an isometric differential $d\varphi_e$ at $e$. Since $\varphi$ is $\widetilde{S}$-equivariant for the right action and since (by the same lemma) the $\widetilde{S}$-actions on both $(\widetilde{S}, h_{\mS})$ and $(\widetilde{M}, \overline{h})$ are isometric, we conclude that $\varphi$ is a local isometry. The geodesic completeness of $(\widetilde{S}, h_{\mS})$ implies by the results from \cite{ONeill} that $\varphi$ is an isometry. This proves that the conclusions (2) from Theorems~\ref{thm:difftype} and \ref{thm:metrictype} hold.

On the other hand, the $\wtU(p,q)$-action on $\widetilde{M}$ (lifted from that of $M$) yields through the isometry $\varphi$ an isometric left $\wtU(p,q)$-action on $\widetilde{S}$. Consider the corresponding homomorphism
\[
    \rho : \wtU(p,q) \rightarrow \Iso(\widetilde{S}).
\]
It is known (see for example \cite{Muller}) that the connected component of the identity of $\Iso(\widetilde{S})$ is $L(\widetilde{S})R(\widetilde{S})$, the group of left and right translations in the group $\widetilde{S}$. Hence, there exist homomorphisms $\rho_1, \rho_2 : \wtU(p,q) \rightarrow \widetilde{S}$ so that
\[
    \rho(g) = L_{\rho_1(g)} R_{\rho_2(g)^{-1}}
\]
for every $g \in \wtU(p,q)$. Recall that the left $\wtU(p,q)$-action and the right $\widetilde{S}$-action on $\widetilde{M}$ commute which implies that
\[
    \rho_2(\wtU(p,q)) \subset Z(\widetilde{S})
\]
and so that $\rho_2$ is trivial. This proves that $\rho$ can be thought as a non-trivial homomorphism $\wtU(p,q) \rightarrow \widetilde{S}$ and that the isometric $\wtU(p,q)$-action on $\widetilde{S}$ is the left translation action induced by this homomorphism

On the other hand, we know that the actions of $\wtSU(p,q)$ and $Z(\wtU(p,q))_0$ on $M$ are locally free, and so $\rho$ is locally injective on $\wtSU(p,q)$ and $Z(\wtU(p,q))_0$. Let $X \in \su(p,q)$ and $Y \in Z(\uni(p,q))$ be given so that
\[
    \rho(\exp(tX) \exp(tY)) = e
\]
for every $t \in \R$. By applying this to any given point in $\widetilde{S}$ and differentiating at $t = 0$ we conclude that $X^* = -Y^*$. Through the identification $\widetilde{S} \simeq \widetilde{M}$ given by $\varphi$ this yields the same identity $X^* = -Y^*$ for the corresponding vector fields in $\widetilde{M}$. Hence, the pointwise linear independence of $T\OO$ and $\ZZ$ implies that $X^* = Y^* = 0$, and the local freeness of the actions of $\wtSU(p,q)$ and $Z(\wtU(p,q))_0$ yield $X = Y = 0$. This proves that $\rho$ is a locally injective homomorphism on $\wtU(p,q)$. Consider the corresponding injective homomorphism of Lie algebras
\[
    d\rho : \uni(p,q) \rightarrow \mS.
\]
With respect to the $\su(p,q)$-module structure on $\mS$ given by the Lie subalgebra $d\rho(\su(p,q))$ let $V$ be an $\su(p,q)$-submodule such that
\[
    \mS = d\rho(\uni(p,q)) \oplus V.
\]
If $V$ is a trivial $\su(p,q)$-submodule, then Lemma~\ref{lem:brackets-homomorphism} implies that $d\rho(Z(\uni(p,q)) \oplus V$ is an ideal of $\mS$. This contradiction shows that $V$ is non-trivial and so, by Lemma~\ref{lem:lowest-dim}, it is isomorphic to $\C^{p,q}_\R$ as $\su(p,q)$-module. In this situation, Lemma~\ref{lem:brackets-homomorphism} now implies that
\[
    [V, V] \subset d\rho(\uni(p,q)),
\]
and so that $(\mS, d\rho(\uni(p,q)))$ is a symmetric pair. With the isomorphism types of the objects involved Table~II from \cite{Berger} shows that $(\mS, d\rho(\uni(p,q)))$ is equivalent as a symmetric pair to either $(\su(p,q+1), \uni(p,q))$ or $(\su(p+1,q), \uni(p,q))$. This proves that the homomorphism $\rho : \wtU(p,q) \rightarrow \widetilde{S}$ is a canonical local embedding of $\wtU(p,q)$ into either $\wtSU(p+1,q)$ or $\wtSU(p,q+1)$, according to whether $\widetilde{S}$ is isomorphic to one or the other. Since this defines the $\wtU(p,q)$-action on $\widetilde{S}$ this proves that the conclusion (2) from Theorem~\ref{thm:metrictype} holds.

\subsection{Proof of Theorems~\ref{thm:difftype}, \ref{thm:metrictype} and \ref{thm:actiontype}: case $\LL = \HH_0(x_0)$.} For this case, the map $ev_{x_0} : \HH \rightarrow T_{x_0}\widetilde{M}$ is a surjection with kernel $\LL = \HH_0(x_0)$ and maps isomorphically
\[
    \DD\GG(x_0) \simeq T_{x_0}\OO, \quad
    \ZZ\GG(x_0) \simeq \ZZ_{x_0}, \quad
    \VV(x_0) \simeq \YY_{x_0}.
\]
In this case we have
\[
    \mS = \DD\GG(x_0) \oplus \HH_0(x_0) \oplus \VV(x_0)
\]
As before, let us denote by $\widetilde{S}$ a simply connected Lie group whose Lie algebra is $\mS$. Hence, $\widetilde{S} \times \R$ is a simply connected Lie group whose Lie algebra is $\HH$, where the factor $\R$ corresponds to the Lie subalgebra $\ZZ\GG(x_0)$.

As in the previous subsection, there is an isometric analytic right $\widetilde{S}\times\R$-action on $\widetilde{M}$ so that for every $X \in \HH$ the Killing vector field over $\widetilde{M}$ determined by the local flow $p \mapsto p \exp(tX)$ is $X$ itself. Let us denote by $K$ the connected Lie subgroup of $\widetilde{S}$ with Lie algebra $\HH_0(x_0)$, which is a closed subgroup as one can easily see. Then, the map
\begin{align*}
    \varphi : K \backslash \widetilde{S} \times \R &\rightarrow \widetilde{M} \\
        \varphi(K(s,t)) &= x_0 (s,t)
\end{align*}
is well-defined and analytic. By the above remarks, we have
\[
    d\varphi_e(X + \HH_0(x_0)) = ev_{x_0}(X)
\]
for every $X \in \HH$, and so $d\varphi_e$ is an isomorphism because of the above remarks. The $\widetilde{S}\times\R$-equivariance of $\varphi$ thus implies that it is a local diffeomorphism.

We now rescale the metric as in the previous case.

\begin{lemma}
    \label{lem:L=H0-rescaling}
    Assume that $\LL = \HH_0(x_0)$ and that the above notation holds. Let $h$ be the pseudo-Riemannian metric on $M$. Consider $\HH = \mS \oplus Z(\HH)$ endowed with the scalar product obtained as the direct sum of the Killing form on $\mS$ and some scalar product on $Z(\HH)$. Then, there exists a pseudo-Riemannian metric $\overline{h}$ on $M$ of the form
    \[
        \overline{h} = c_1 h|_{T\OO} \oplus c_2 h|_{\ZZ} \oplus c_3 h|_{\YY}.
    \]
    for some non-zero constants $c_1, c_2, c_3$ such that the map
    \[
        d\varphi_e : \HH_0(x_0)\backslash\HH \rightarrow (T_{x_0}\widetilde{M}, \overline{h}_{x_0})
    \]
    is an isometry, where $\HH_0(x_0)\backslash\HH$ carries the quotient scalar product. Furthermore, the metric $\overline{h}$ is $\wtU(p,q)$-invariant on $M$ and its lift to $\widetilde{M}$ is $\widetilde{S}\times\R$-invariant.
\end{lemma}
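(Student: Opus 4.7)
The plan is to transport the computation carried out in the previous subsection to this slightly more complicated quotient setting. First I would observe that, because $\ker(ev_{x_0}) = \HH_0(x_0)$, the evaluation map factors through a well-defined linear map
\[
    \overline{ev}_{x_0} : \HH_0(x_0)\backslash\HH \rightarrow T_{x_0}\widetilde{M},
\]
and by the $\widetilde{S}\times\R$-equivariance of $\varphi$ (together with the fact that $\widetilde{S}\times\R$ realizes $\HH$ as Killing fields via the right action), we have $d\varphi_e = \overline{ev}_{x_0}$. The isomorphisms recorded just before the lemma statement then identify $\DD\GG(x_0)$ with $T_{x_0}\OO$, $\ZZ\GG(x_0)$ with $\ZZ_{x_0}$, and $\VV(x_0)$ with $\YY_{x_0}$, and Lemma~\ref{lem:ZY-bundles} ensures these three target subspaces are mutually orthogonal and non-degenerate for $h_{x_0}$.

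Next, I would pull back the three restrictions $h|_{T_{x_0}\OO}$, $h|_{\ZZ_{x_0}}$, $h|_{\YY_{x_0}}$ to $\DD\GG(x_0)$, $\ZZ\GG(x_0)$, $\VV(x_0)$ via $\overline{ev}_{x_0}$. Each of these pulled-back scalar products is $\DD\GG(x_0)$-invariant because $\overline{ev}_{x_0}$ is $\DD\GG(x_0)$-equivariant (Lemma~\ref{lem:TxM-module-structure}). Now $\DD\GG(x_0) \simeq \su(p,q)$ is simple, so up to a scalar the only $\DD\GG(x_0)$-invariant form on it is the Killing form; by Lemma~\ref{lem:Cpq-scalar} the same dichotomy holds on $\VV(x_0) \simeq \C^{p,q}_\R$; and on the one-dimensional trivial module $\ZZ\GG(x_0)$ every scalar product is proportional to the chosen one. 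Hence there are unique non-zero constants $c_1, c_2, c_3$ making
\[
    c_1 h|_{T_{x_0}\OO},\quad c_2 h|_{\ZZ_{x_0}},\quad c_3 h|_{\YY_{x_0}}
\]
pull back to the Killing form of $\mS$ on $\DD\GG(x_0)$ and $\VV(x_0)$, and to the chosen scalar product on $Z(\HH) = \ZZ\GG(x_0)$ respectively. Since the three summands of $\HH_0(x_0)\backslash\HH$ are orthogonal (as $\DD\GG(x_0) \oplus \HH_0(x_0)$ and $\VV(x_0)$ are the two summands of the symmetric pair from Proposition~\ref{prop:HH-structure-nonintTOperp} and $Z(\HH)$ is orthogonal to $\mS$ by construction), and the three target subspaces are also orthogonal, $\overline{ev}_{x_0}$ becomes an isometry for
\[
    \overline{h} := c_1 h|_{T\OO} \oplus c_2 h|_{\ZZ} \oplus c_3 h|_{\YY},
\]
defined globally on $M$.

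To finish, I would verify the two invariance claims. For $\wtU(p,q)$-invariance, $h$ is $\wtU(p,q)$-invariant and each of the bundles $T\OO$, $\ZZ$ and (hence) $\YY$ is $\wtU(p,q)$-invariant by construction, so every summand of $\overline{h}$ is $\wtU(p,q)$-invariant. For $\widetilde{S}\times\R$-invariance of the lift to $\widetilde{M}$, the right $\widetilde{S}\times\R$-action is isometric for $h$ and commutes with the $\wtU(p,q)$-action; thus it preserves $T\OO$ (orbits of $\wtSU(p,q)$) and $\ZZ$ (orbits of $Z(\wtU(p,q))_0$), and therefore also their $h$-orthogonal complement $\YY$. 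Each summand of $\overline{h}$ is a constant multiple of $h$ restricted to a preserved subbundle, so $\overline{h}$ is $\widetilde{S}\times\R$-invariant as well.

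The only subtlety I anticipate is making sure the decomposition $\HH_0(x_0)\backslash\HH \simeq \DD\GG(x_0) \oplus \VV(x_0) \oplus \ZZ\GG(x_0)$ really is orthogonal for the scalar product specified in the statement, which amounts to checking that $\DD\GG(x_0) \perp \VV(x_0)$ and $\HH_0(x_0) \perp \VV(x_0)$ for the Killing form of $\mS$; both hold because $(\mS, \DD\GG(x_0) \oplus \HH_0(x_0))$ is a symmetric pair with isotropy summand $\VV(x_0)$, and the Killing form of a semisimple symmetric pair is block-diagonal for this splitting. Everything else is a direct application of Schur-type uniqueness and the invariance bookkeeping above.
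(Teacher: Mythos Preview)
Your proposal is correct and follows essentially the same route as the paper, which simply says the proof is ``similar to that of Lemma~\ref{lem:Lnot=H0-rescaling}'' and then adds one observation. The only nuance is emphasis: the paper singles out as the extra ingredient that $\LL = \HH_0(x_0)$ is \emph{non-degenerate} for the Killing form of $\mS$ (so that the quotient bilinear form on $\HH_0(x_0)\backslash\HH$ is genuinely a scalar product), whereas you frame the extra work as checking \emph{orthogonality} of the summands; your orthogonality checks, together with the non-degeneracy of the Killing form on the simple algebra $\mS$, yield exactly that non-degeneracy, so the arguments are equivalent.
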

\begin{proof}
    The proof is similar to that of Lemma~\ref{lem:Lnot=H0-rescaling}. The only additional argument needed is to observe that since $(\mS, \DD\GG(x_0) \oplus \LL)$ is a symmetric pair isomorphic to either $(\su(p,q+1), \uni(p,q))$ or $(\su(p+1,q), \uni(p,q))$ (see Proposition~\ref{prop:HH-structure-nonintTOperp}) it follows that $\LL = \HH_0(x_0)$ is non-degenerate with respect to the Killing form of $\mS$. Hence, the induced bilinear form on $\HH_0(x_0)\backslash\mS$ is indeed a scalar product.
\end{proof}

Let us denote by $h_{\HH}$ the bi-invariant pseudo-Riemannian metric on $\widetilde{S} \times \R$ whose value at $e$ is the scalar product on $\HH$ described in Lemma~\ref{lem:L=H0-rescaling}. Hence, $(\widetilde{S} \times \R, h_{\HH})$ is a geodesically complete pseudo-Riemannian manifold. Since the Lie algebra of $K$ is non-degenerate with respect to the metric on $\widetilde{S} \times \R$, it follows that $K$ is a non-degenerate submanifold with respect to the metric $h_\HH$. Hence, there is an induced pseudo-Riemannian metric on $K\backslash \widetilde{S} \times \R$ so that the quotient map
\[
    \pi : H \rightarrow K\backslash H
\]
is a pseudo-Riemannian submersion. We recall that the geodesics in $\widetilde{S}$ are translations of $1$-parameter subgroups, and it is easy to see that these are horizontal with respect to the pseudo-Riemannian submersion $\pi$. It follows from the results in \cite{ONeill} that $(K\backslash \widetilde{S} \times \R, h_{\HH})$ is geodesically complete. As in the previous case, we use Lemma~\ref{lem:L=H0-rescaling} to conclude that the $H$-orbit map defined before
\[
    \varphi : (K \backslash H, h_{\HH}) \rightarrow (\widetilde{M}, \overline{h})
\]
is an isometry. This proves that the conclusions (3) from Theorems~\ref{thm:difftype} and \ref{thm:metrictype} hold.

On the other hand, both the $\wtSU(p,q)$ and $Z(\wtU(p,q))_0$-actions preserve the decomposition $T\widetilde{M} = T\OO \oplus \YY \oplus \ZZ$. We also note that with respect to the diffeomorphism $\varphi$ the bundle $T\OO \oplus \YY$ restricted to $K \backslash \widetilde{S} \times \{0\}$ is precisely the tangent bundle of this manifold. And a similar relationship holds between $\ZZ$ and $\{K e\} \times \R$. As a consequence, with respect to the diffeomorphism $\varphi$, the $\wtU(p,q)$-action on $K \backslash \widetilde{S} \times \R$ is the product of isometric actions of $\wtSU(p,q)$ on $K \backslash \widetilde{S}$ and of $Z(\wtU(p,q))_0$ on $\R$. This shows that the conclusion (3) from Theorem~\ref{thm:actiontype} holds.

\subsection{Proof of Theorems~\ref{thm:irredcase} and \ref{thm:pi1(M)}}
From the above, Theorem~\ref{thm:irredcase} now follows from the definition of weak irreducibility which implies that $\widetilde{M}$ cannot be a non-trivial pseudo-Riemannian product when $M$ is weakly irreducible.

The first claim of Theorem~\ref{thm:pi1(M)} follows from Theorem~\ref{thm:metrictype}. For the second part, we assume that case (2) in Theorem~\ref{thm:metrictype} holds and we let
\[
    \Gamma = \pi_1(M) \cap \Iso_0(\widetilde{S}),
\]
where $\widetilde{S}$ is either $\wtSU(p,q+1)$ or $\wtSU(p+1,q)$ with a bi-invariant metric. Note that we have identified $N = \widetilde{S} \simeq \widetilde{M}$ through the $\wtSU(p,q)$-equivariant isometry $\varphi$. From the results in \cite{Muller} it follows that $\Iso_0(\widetilde{S})$ has finite index in $\Iso(\widetilde{S})$, and so $\Gamma$ has finite index in $\pi_1(M)$. Furthermore, we have $\Iso_0(\widetilde{S}) = L(\widetilde{S})R(\widetilde{S})$, the group generated by left and right translations. Hence, the inclusion $\Gamma \subset \Iso_0(\widetilde{S})$ is realized by a pair of homomorphisms
\[
    \rho_1, \rho_2 : \Gamma \rightarrow \widetilde{S},
\]
by the expression
\[
    \gamma = L_{\rho_1(\gamma)} \circ R_{\rho_2(\gamma)^{-1}}
\]
for every $\gamma \in \Gamma$. Next we observe that the $\Gamma$-action and the $\wtU(p,q)$-action on $\widetilde{M}$ lifted from $M$ commute with each other. Since we are assuming that (2) from Theorem~\ref{thm:actiontype} holds, the latter action is the left translation action for the canonical symmetric pair embedding of $\wtU(p,q)$ into $\widetilde{S}$. This implies that $\rho_1(\Gamma)$ is contained in the centralizer $C$ of $\wtU(p,q)$ in $\widetilde{S}$. Hence, through the above expression for the elements $\gamma \in \Gamma$, we conclude that $\Gamma$ is a subgroup of $C \times \widetilde{S}$.

\appendix
\section{Some facts on the Lie algebra $\su(p,q)$}
\label{app:supq}
The first task of this appendix is to determine the non-trivial irreducible real representations of
$\su(p,q)$ with lowest possible dimension. The main observation is that every irreducible real
representation of $\su(p,q)$ can be obtained from an irreducible complex representation of
$\sli(n,\C)$ according to the results from \cite{Oni}. Using this fact we obtain the following
result.

\begin{lemma}\label{lem:lowest-dim}
    Let $p, q \geq 1$ and $n = p+q \geq 3$.
    \begin{enumerate}
        \item \label{item:modules-not22} If $(p,q) \not= (2,2)$, then the lowest dimensional non-trivial irreducible real $\su(p,q)$-module is $\C^{p,q}_\R$. Furthermore, $\dim V > 2n + 1$ for every non-trivial irreducible real $\su(p,q)$-module $V$ nonisomorphic to $\C^{p,q}_\R$.
        \item \label{item:modules-22} The isomorphism $\su(2,2) \simeq \so(2,4)$ yields a $6$-dimensional irreducible real $\su(2,2)$-module. The latter and the $8$-dimensional irreducible real $\su(2,2)$-module $\C^{2,2}_\R$ are the lowest dimensional ones. In other words, if $V$ is any other non-trivial irreducible real $\su(2,2)$-module, then $\dim V > 9$.
    \end{enumerate}
\end{lemma}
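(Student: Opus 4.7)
The plan is to invoke Onishchik's classification \cite{Oni}, which reduces irreducible real representations of $\su(p,q)$ to irreducible complex representations of its complexification $\sli(n,\C)$. Every irreducible real $\su(p,q)$-module $V$ is either the underlying real module of an irreducible complex $\su(p,q)$-module $W$ of complex type (with $\dim_\R V = 2\dim_\C W$), or a real form of an irreducible complex $W$ of real type (with $\dim_\R V = \dim_\C W$); and the irreducible complex $\su(p,q)$-modules correspond bijectively via complexification to the irreducible complex $\sli(n,\C)$-modules, i.e.~to dominant integral highest weights.

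First I would list the small complex irreducibles of $\sli(n,\C)$ and determine self-duality in each case. The smallest non-trivial ones are the standard $\C^n$ and its dual $(\C^n)^*$, of complex dimension $n$; since their highest weights $\omega_1$ and $\omega_{n-1}$ differ for $n \geq 3$, both are of complex type as $\su(p,q)$-modules and their realification is isomorphic to $\C^{p,q}_\R$ of real dimension $2n$. The next smallest are $\wedge^2\C^n$ of complex dimension $\binom{n}{2}$, the adjoint of complex dimension $n^2 - 1$, and $S^2\C^n$ of complex dimension $\binom{n+1}{2}$.

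Next I would verify the bound $\dim V > 2n+1$ for all other non-trivial irreducibles. The adjoint realifies to $\su(p,q)$ itself, of real dimension $n^2 - 1 > 2n+1$ for $n \geq 3$. For $n \geq 5$, $\wedge^2\C^n$ is not self-dual (its dual is $\wedge^{n-2}\C^n$) hence of complex type, realifying to real dimension $n(n-1) > 2n+1$; the same analysis handles $S^2\C^n$ (realification $n(n+1)$) and any other irreducible of complex dimension at least $\binom{n}{2}$. This leaves the small cases $n = 3, 4$ to be handled by direct enumeration: for $n = 3$ the next smallest complex irreducibles after the standard family are $S^2\C^3$ (realified to $12$) and the adjoint (dimension $8$), both exceeding $2n+1 = 7$.

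The main obstacle is the case $n = 4$, where $\wedge^2\C^4$ is self-dual (highest weight $\omega_2$ is self-conjugate under the Dynkin involution) and its type depends on the real form of $\sli(4,\C)$. Using the classical isomorphisms $\su(2,2) \simeq \so(2,4)$ and $\su(1,3) \simeq \su(3,1) \simeq \so^*(6)$, I would identify $\wedge^2\C^4$ as the vector representation of $\so(2,4)$ in the first case, which is of real type and realifies to the $6$-dimensional module $\R^{4,2}$; and as the defining representation of $\so^*(6)$ carrying an invariant quaternionic structure in the second case, which is of quaternionic type and realifies to real dimension $12 > 2n+1 = 9$. This accounts for the exceptional $6$-dimensional module in part~(2) and verifies part~(1) in the case $(p,q) \in \{(1,3),(3,1)\}$. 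For part~(2) the same enumeration shows that, after $\R^{4,2}$ and $\C^{2,2}_\R$, the next smallest real irreducibles of $\su(2,2)$ are the adjoint (dimension $15$) and the realification of $S^2\C^4$ (dimension $20$), both exceeding $9$.
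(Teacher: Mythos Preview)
Your approach is the same as the paper's: both invoke Onishchik's classification to pass to complex irreducibles of $\sli(n,\C)$ and then bound dimensions. The paper does the bounding systematically via explicit Weyl dimension formula computations, separating self-conjugate from non-self-conjugate highest weights; you instead enumerate the small representations by name. Your handling of $n=4$ via the classical isomorphisms $\su(2,2)\simeq\so(2,4)$ and $\su(1,3)\simeq\so^*(6)$ is a clean alternative to the paper's Cartan index computation from \cite{Oni}.

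There is one place your sketch needs tightening. The phrase ``the same analysis handles \ldots\ any other irreducible of complex dimension at least $\binom{n}{2}$'' implicitly assumes such irreducibles are of complex type, so realification doubles the dimension. But self-dual irreducibles (e.g.\ $\wedge^{n/2}\C^n$ for even $n$, or higher self-conjugate weights) can be of real type, in which case the real dimension equals the complex one; you then need $\binom{n}{2}>2n+1$, which fails for $n=5$. For $n=5$ you must observe separately that the smallest self-conjugate highest weight is $\omega_1+\omega_4$ (the adjoint, complex dimension $24$), so no small real-type module exists. The paper sidesteps this by treating the self-conjugate family uniformly from the start. You should also justify, not merely assert, that every non-trivial complex $\sli(n,\C)$-irreducible other than $\C^n$ and $(\C^n)^*$ has complex dimension at least $\binom{n}{2}$; the paper does this via monotonicity of Weyl's formula in the highest-weight coordinates.
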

\begin{proof}
    The following facts can be found in \cite{Oni}.
    \begin{itemize}
        \item Every irreducible complex $\sli(n,\C)$-module $W$ determines an irreducible real $\su(p,q)$-module $V$ through one of the following mutually exclusive possibilities.
            \begin{enumerate}
                \item[(a)] $V= W_\R$, the realification of $W$.
                \item[(b)] $V$ is a real form of $W$ which is $\su(p,q)$-invariant.
            \end{enumerate}
        \item Case (b) occurs precisely when $V$ is self-conjugate and its Cartan index, as defined in \cite{Oni}, is $1$.
        \item The above exhausts all possible irreducible real $\su(p,q)$-modules.
    \end{itemize}
    We will freely use the results from \cite{Oni} and refer to this work for further details.

    For a dominant weight $\lambda$ of $\sli(n,\C)$, we will denote by $V^\lambda$ and $W^\lambda$ the irreducible modules over $\su(p,q)$ and $\sli(n,\C)$, respectively, as described above. In particular, we have either $\dim V^\lambda = 2\dim_\C W^\lambda$ or $\dim V^\lambda = \dim_\C W^\lambda$ according to whether case (a) or (b), respectively, holds.

    Let us denote by $\omega_1, \dots, \omega_{n-1}$ the fundamental weights of $\sli(n,\C)$. We have the following two well known inequalities where $\lambda$ is a dominant weight different from a fundamental weight
    \begin{align*}
        \dim_\C W^\lambda &> \min_{i=1,\dots,n-1} \dim_\C W^{\omega_i} \\
        \min_{i=1,\dots,n-1} \dim_\C W^{\omega_i} &\geq \dim_\C
        W^{\omega_1} = \dim_\C W^{\omega_{n-1}} = n.
    \end{align*}
    Moreover, $\dim_\C W^{\omega_i} = n$ only for $i \in \{1, n-1\}$. By the results from \cite{Oni}, $W^{\omega_1}$ and $W^{\omega_{n-1}}$ define the same irreducible real $\su(p,q)$-module $V^{\omega_1} = \C^{p,q}_\R$.

    The above remarks imply that
    \[
        \dim V^\lambda \geq 2n + 2
    \]
    for every dominant weight $\lambda \not= \omega_1, \omega_{n-1}$ whose corresponding $\sli(n,\C)$-module is not self-conjugate. To prove \eqref{item:modules-not22}, it remains to consider the dominant weights whose corresponding modules are self-conjugate.

    By \cite{Oni}, a dominant weight
    \[
        \lambda = k_1 \omega_1 + \dots + k_{n-1} \omega_{n-1}
    \]
    defines a self-conjugate $\sli(n,\C)$-module precisely when it satisfies $k_i = k_{n-i}$ for every $i = 1, \dots, [n/2]$. Note that in this case, one still has to consider the Cartan index in order to determine if the irreducible real representation of $\su(p,q)$ comes from either case (a) or (b).

    Let us define the dominant weights given by $\lambda_i = \omega_i + \omega_{n-i}$ for $i=1, \dots, [n/2]-1$ and
    \[
        \lambda_{[n/2]} =
            \begin{cases}
                \omega_{n/2} & \text{ if $n$ is even;} \\
                \omega_{[n/2]} + \omega_{[n/2]+1} & \text{ if $n$ is odd.}
            \end{cases}
    \]
    These weights yield self-conjugate irreducible $\sli(n,\C)$-modules. Furthermore, since Weyl dimension formula for $W^\lambda$ is monotone in each one of the coordinates $k_i$ of $\lambda$ as above, we conclude that
    \[
        \dim_\C W^\lambda \geq \min_{i=1,\dots,[n/2]} \dim_\C W^{\lambda_i}.
    \]
    Hence, we will consider the value of the right-hand side of this last inequality.

    \begin{claim}[1]
    For every $n = p+q \not= 4$ we have the following.
    \begin{itemize}
        \item If $n$ is odd, then
        \[
            \min_{i=1,\dots,[n/2]} \dim_\C W^{\lambda_i} \geq \dim_\C W^{\lambda_1}.
        \]
        \item If $n=2m$ is even, then
            \begin{align*}
                \min_{i=1,\dots,m-1} \dim_\C W^{\lambda_i} &\geq \dim_\C W^{\lambda_1}, \\
                \dim_\C W^{\lambda_m} = \dim_\C W^{\omega_m} &> 2n + 1.
            \end{align*}
    \end{itemize}
    Furthermore, for every $(p,q)$ as in our hypotheses we have $\dim_\C W^{\lambda_1} > 2n + 1$.
    \end{claim}

    To prove Claim (1), we will use the real form of the Cartan subalgebra of $\sli(n,\C)$ that consists of diagonal matrices. Such real form is naturally identified with the real vector space $\h = \{ v \in \R^n : v_1 + \dots + v_n = 0 \}$ and endowed with the canonical scalar product $(\cdot,\cdot)$ inherited from $\R^n$. For $v \in \R^n$, we will denote by $v^\perp$ its orthogonal projection into $\h$. In particular, the fundamental weights are given by $\omega_i = (e_1 + \dots + e_i)^\perp$, for $i = 1, \dots, n-1$, where $(e_i)_{i=1}^n$ is the canonical base of $\R^n$. Also, the sum of positive roots is given by
    \[
        \rho = \frac{1}{2}(n-1, n-3, \dots, -n+1).
    \]
    First we compute $\dim_\C W^{\lambda_1}$. In this case we have highest weight
    \[
        \lambda_1 = \omega_1 + \omega_{n-1} = (2e_1 + e_2 + \dots + e_{n-1})^\perp,
    \]
    and a direct application of Weyl dimension formula yields
    \begin{align}
        \dim_\C W^{\lambda_1}
        &= \prod_{1\leq \nu < \mu \leq n} \left(
            1 + \frac{(\lambda_1, e_\nu-e_\mu)}{(\rho, e_\nu-e_\mu)}
            \right) \label{eq:dim1}\\
        &= \prod_{\mu=2}^{n-1} \left( 1 + \frac{1}{\mu-1} \right)
            \left( 1 + \frac{2}{n-1} \right)
            \prod_{\nu=2}^{n-1} \left( 1 + \frac{1}{n-\nu} \right) \nonumber \\
        &= \prod_{\mu=2}^{n-1} \left( \frac{\mu}{\mu-1} \right)
            \left( \frac{n+1}{n-1} \right) \prod_{\nu=2}^{n-1} \left(
                \frac{n-\nu+1}{n-\nu} \right) \nonumber \\
        &= (n-1) \left( \frac{n+1}{n-1} \right)
            (n-1) = n^2 - 1, \nonumber
    \end{align}
    from which we conclude that $\dim_\C W^{\lambda_1} > 2n + 1$ for $n \geq 3$, thus proving the last part of Claim (1).

    We now proceed to prove the two items in the first part of Claim~(1), so we assume that $n \not= 4$. Choose a positive integer $i < n/2$ and consider the dominant weight
    \[
        \lambda_i = \omega_i + \omega_{n-i} = (2(e_1 + \dots + e_i) + e_{i+1} + \dots + e_{n-i})^\perp,
    \]
    for which a direct application of Weyl dimension formula yields
    \begin{align}
        \dim_\C W^{\lambda_i}
            &= \prod_{1\leq \nu < \mu \leq n}
                \left( 1 + \frac{(\lambda_i, e_\nu-e_\mu)}{(\rho, e_\nu-e_\mu)} \right) \label{eq:dimi} \\
            &= \prod_{\nu=1}^i\prod_{\mu=i+1}^{n-i}
                \left( 1 + \frac{1}{\mu-\nu} \right) \nonumber \\
            &\times \prod_{\nu=1}^i\prod_{\mu=n-i+1}^n
                \left( 1 + \frac{2}{\mu-\nu} \right) \nonumber \\
            &\times \prod_{\nu=i+1}^{n-i}\prod_{\mu=n-i+1}^n
                \left( 1 + \frac{1}{\mu-\nu} \right). \nonumber
    \end{align}

    Let us denote by $Q_1, Q_2, Q_3$ the three factors in the second line of equation \eqref{eq:dim1} enumerated as they appear; in particular, $Q_1$ and $Q_3$ are indexed products and $Q_2$ has a single factor. Correspondingly, let $P_1, P_2, P_3$ be the three double products in the last three lines of equation \eqref{eq:dimi}, enumerated as they appear. To prove that $\dim_\C W^{\lambda_i} \geq \dim_\C W^{\lambda_1}$ it is enough to show that each of the factors $Q_k$ is bounded from above by subproducts taken from $P_1 P_2 P_3$ so that there are nonoverlaping subproducts. This is achieved with the following choices.
    \begin{itemize}
        \item From $P_1$ the subproduct given by fixing $\mu = i+1$ and varying $\nu = 2, \dots, i$ yields
            \[
                \prod_{\nu=2}^i  \left( 1 + \frac{1}{i+1-\nu} \right)
                = \prod_{\mu=2}^i \left( 1 + \frac{1}{\mu-1} \right).
            \]
            In $P_1$ the subproduct given by fixing $\nu = 1$ is precisely
            \[
                \prod_{\mu=i+1}^{n-i} \left( 1 + \frac{1}{\mu - 1} \right).
            \]
            And the subproduct of $P_2$ obtained by fixing $\nu = 1$ and varying $\mu = n-i+1, \dots, n-1$ satisfies
            \[
                \prod_{\mu=n-i+1}^{n-1} \left( 1 + \frac{2}{\mu - 1} \right) >
                    \prod_{\mu=n-i+1}^{n-1} \left( 1 + \frac{1}{\mu - 1} \right).
            \]
            Hence, the product of the three subproducts just described bounds $Q_1$.
        \item The subproduct of $P_2$ obtained by fixing $\mu = n$ and varying $\nu = 2, \dots, i$ satisfies
            \[
                \prod_{\nu=2}^i \left( 1 + \frac{2}{n - \nu} \right) >
                    \prod_{\nu=2}^i \left( 1 + \frac{1}{n - \nu} \right).
            \]
            In $P_3$ the subproduct given by fixing $\mu = n$ is precisely
            \[
                \prod_{\nu=i+1}^{n-i} \left( 1 + \frac{1}{n - \nu} \right).
            \]
            And from $P_3$ the subproduct given by fixing $\nu = n - i$ and varying $\mu = n-i+1, \dots, n-1$ yields
            \[
                \prod_{\mu=n-i+1}^{n-1} \left( 1 + \frac{1}{\mu - n + i} \right)
                  = \prod_{\nu=n-i+1}^{n-1} \left( 1 + \frac{1}{n - \nu} \right).
            \]
            Hence, the product of the three subproducts just described bounds    $Q_3$.
        \item The factor in $P_2$ corresponding to $\nu = 1$ and $\mu = n$ equals $Q_2$.
    \end{itemize}

    The above proves the case of $n$ odd in Claim~(1) as well as the first inequality for the case
    of $n = 2m$ even. It remains to consider the dominant weight $\lambda_{n/2} = \lambda_m =
    \omega_m$.

    It is well known that:
    \[
        \dim_\C W^{\omega_m} =
            \begin{pmatrix}
                2m \\
                m
            \end{pmatrix}
            > 2^m
    \]
    where the last inequality holds for $m \geq 2$. And we observe that $2^m \geq 4m + 1 = 2n + 1$
    for $m \geq 5$.

    On the other hand, for $m = 3$, $n = 6$ we can directly verify that:
    \[
        \dim_\C W^{\omega_m} = \dim_\C W^{\omega_3} =
            \begin{pmatrix}
                6 \\
                3
            \end{pmatrix} = 20 > 13 = 4m + 1 = 2n + 1.
    \]
    And for $m = 4$, $n = 8$, we compute:
    \[
        \dim_\C W^{\omega_m} = \dim_\C W^{\omega_4} =
            \begin{pmatrix}
                8 \\
                4
            \end{pmatrix} = 70 > 17 = 4m + 1 = 2n + 1.
    \]

    This completes the proof of Claim (1), which together with above remarks imply part
    \eqref{item:modules-not22} of the Lemma for $\{p,q\} \not= \{1,3\}$. Hence, to complete the
    proof of the Lemma it remains to consider the case $n = 4$ for both $(p,q) = (2,2)$ and $(p,q)
    = (1,3)$.

    For $\lambda$ a self-conjugate dominant weight for $\sli(4,\C)$ and from the above discussion
    we can write $\lambda = k_1\lambda_1 + k_2 \omega_2$. If $k_1 \not= 0$, then the previous
    remarks and the first inequality of Claim (1) imply
    \[
        \dim_\C W^{\lambda} \geq \dim_\C W^{\lambda_1} > 2n + 1 = 9.
    \]
    In particular, $\dim V^{\lambda} > 2n + 1 = 9$ in this case.  Hence, we can assume that
    $\lambda = k \omega_2$, in which case Weyl dimension formula yields
    \[
        \dim_\C W^{k\omega_2} = \frac{(k+1)(k+2)^2(k+3)}{12}.
    \]
    It follows easily, that $\dim_\C W^{k\omega_2} \geq 20 > 9 = 2n + 1$ for $k \geq 2$. This again
    implies that $\dim V^{k \omega_2} > 2n + 1 = 9$ for every $k \geq 2$.

    Finally, it remains to consider the dimension of $V^{\omega_2}$ as real $\su(p,q)$-module for
    $(p,q) = (2,2)$ and $(p,q) = (1,3)$. From the above formulas, we have $\dim_\C W^{\omega_2} =
    6$, and from \cite{Oni} the Cartan index of $\omega_2$ is $1$ for the case $(p,q) = (2,2)$ and
    $-1$ for the case $(p,q) = (1,3)$. In particular, we have
    \[
        \dim V^{\omega_2} =
        \begin{cases}
            6 & \text{ for $(p,q) = (2,2)$}; \\
            12 & \text{ for $(p,q) = (1,3)$}.
        \end{cases}
    \]
    The result for the case $(p,q) = (1,3)$ completes the proof of case \eqref{item:modules-not22}
    of the Lemma. On the other hand, the $6$-dimensional real $\su(2,2)$-module $V^{\omega_2}$ is
    the defining representation that corresponds to the isomorphism $\su(2,2) \simeq \so(2,4)$;
    this proves case \eqref{item:modules-22}
    of the Lemma.
\end{proof}

Let us denote by $\left<\cdot,\cdot\right>_{p,q}$ the canonical Hermitian scalar product of
signature $p,q$ on $\C^{p,q}$. More precisely, we have
\[
    \left< z, w \right>_{p,q} = z^t I_{p,q} \overline{w}
\]
for $z, w \in \C^{p,q}$. By definition, this Hermitian form is $\su(p,q)$-invariant, and so it
defines two real bilinear forms on $\C^{p,q}_\R$ which are $\su(p,q)$-invariant as well. These are
given as the real and imaginary parts of $\left<\cdot,\cdot\right>_{p,q}$ and will be denoted as
follows
\begin{align*}
    \left<\cdot,\cdot\right>_0 &= \Re(\left<\cdot,\cdot\right>_{p,q}), &
    \omega_0(\cdot,\cdot) &= \Im(\left<\cdot,\cdot\right>_{p,q}).
\end{align*}
Note that these forms are non-degenerate and not a multiple of each other.

\begin{lemma}
    \label{lem:Cpq-scalar}
    Let $p, q \geq 1$. The space of $\su(p,q)$-invariant real bilinear forms on $\C^{p,q}_\R$ has
    dimension $2$ with a basis given by $\left<\cdot,\cdot\right>_0$ and $\omega_0(\cdot,\cdot)$.
    In particular, up to a constant, $\left<\cdot,\cdot\right>_0$ is the unique
    $\su(p,q)$-invariant symmetric real bilinear form on $\C^{p,q}_\R$.
\end{lemma}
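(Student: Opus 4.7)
The plan is to compute the dimension of the space of $\su(p,q)$-invariant real bilinear forms on $V:=\C^{p,q}_\R$ by passing to the complexification, and then simply to verify that $\langle\cdot,\cdot\rangle_0$ and $\omega_0$ give a basis. First I will identify $V\otimes_\R \C$ as a complex $\sli(n,\C)$-module. Since the canonical complex structure $J$ on $V$ commutes with the $\su(p,q)$-action, it extends $\C$-linearly to $V_\C$ and splits it as the direct sum of its $\pm i$-eigenspaces, each of which is $\su(p,q)$-invariant. The $+i$-eigenspace is the standard representation $\C^n$, while the $-i$-eigenspace is the conjugate representation; using $X^*=-I_{p,q}XI_{p,q}^{-1}$ for $X\in\su(p,q)$, one checks that the conjugate representation is isomorphic to the dual $(\C^n)^*$ via the Hermitian form $\langle\cdot,\cdot\rangle_{p,q}$. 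Thus $V_\C\simeq \C^n\oplus(\C^n)^*$ as $\sli(n,\C)$-modules.

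Next, I observe that the space of $\su(p,q)$-invariant real bilinear forms on $V$ complexifies to the space of $\sli(n,\C)$-invariant complex bilinear forms on $V_\C$, since invariants commute with scalar extension. Using the decomposition of $V_\C$, the complexification splits as
\[
(V_\C^*\otimes V_\C^*)^{\sli(n,\C)}=\bigoplus_{A,B\in\{\C^n,\,(\C^n)^*\}}(A^*\otimes B^*)^{\sli(n,\C)}.
\]
For $n\geq 3$ the standard module $\C^n$ and its dual are nonisomorphic irreducible $\sli(n,\C)$-modules, so by Schur the only nonzero contributions come from the two mixed terms $\C^n\otimes(\C^n)^*$ and $(\C^n)^*\otimes\C^n$, each one-dimensional and spanned by the canonical evaluation pairing. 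Hence the space of $\su(p,q)$-invariant real bilinear forms on $V$ has real dimension exactly $2$.

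Finally, since $\langle\cdot,\cdot\rangle_{p,q}$ is Hermitian and $\su(p,q)$-invariant, its real and imaginary parts $\langle\cdot,\cdot\rangle_0$ and $\omega_0$ are nonzero $\su(p,q)$-invariant real bilinear forms on $V$; moreover $\langle\cdot,\cdot\rangle_0$ is symmetric while $\omega_0$ is antisymmetric, so they are linearly independent. By the dimension count they form a basis of the $2$-dimensional invariant space. The final statement is then immediate: any symmetric invariant form must have zero component along $\omega_0$ and is therefore a scalar multiple of $\langle\cdot,\cdot\rangle_0$.

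The main obstacle is the careful identification $V_\C\simeq \C^n\oplus(\C^n)^*$ as $\sli(n,\C)$-modules, which relies on pinning down that the conjugate of the defining representation of $\su(p,q)$ is its dual via $I_{p,q}$; everything else reduces to Schur's lemma and the symmetry/antisymmetry of the two given forms.
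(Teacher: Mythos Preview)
Your argument is correct for $n=p+q\geq 3$ and takes a genuinely different route from the paper. The paper identifies the space $\mathcal{B}$ of invariant bilinear forms with $\Hom_{\su(p,q)}(\C^{p,q}_\R)$ via $T\mapsto\langle T(\cdot),\cdot\rangle_0$, observes by Schur that this is a real division algebra of dimension at least $2$, hence isomorphic to $\C$ or $\HQ$, and rules out $\HQ$ because $\C^n$ is not self-conjugate over $\sli(n,\C)$. You instead complexify the module, decompose $V_\C\simeq\C^n\oplus(\C^n)^*$, and count invariant bilinear forms directly by Schur over $\C$. Your method is more explicit and avoids the classification of real division algebras; the paper's is a bit shorter once one knows $\C^{p,q}_\R$ is irreducible, since that single fact does the structural work.

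Both proofs, and in fact the statement itself, tacitly require $n\geq 3$. Your Schur step needs $\C^n\not\simeq(\C^n)^*$ over $\sli(n,\C)$, which fails for $n=2$; the paper's argument needs $\C^{p,q}_\R$ irreducible and $\C^n$ not self-conjugate, both of which also fail for $n=2$. Indeed for $p=q=1$ the real module $\C^{1,1}_\R$ carries an $\su(1,1)$-invariant real structure (e.g.\ $(z_1,z_2)\mapsto(\bar z_2,\bar z_1)$), hence is reducible, and the space of invariant bilinear forms is $4$-dimensional rather than $2$. Since the lemma is only invoked in the paper under the standing hypothesis $n\geq 3$, this is a harmless imprecision in the hypotheses rather than a defect in your proof.
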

\begin{proof}
    Let us denote by $\mathcal{B}$ the space of $\su(p,q)$-invariant real bilinear forms on
    $\C^{p,q}_\R$. Then, the map given by
    \begin{align*}
        f : \Hom_{\su(p,q)}(\C^{p,q}_\R) &\rightarrow \mathcal{B} \\
        T &\mapsto \left<T(\cdot),\cdot\right>_0
    \end{align*}
    defines an isomorphism of real vector spaces.

    By Schur's lemma, $\Hom_{\su(p,q)}(\C^{p,q}_\R)$ is a division algebra over $\R$, which has
    dimension at least $2$ by the remarks above. Hence, $\Hom_{\su(p,q)}(\C^{p,q}_\R)$ is
    isomorphic to either $\C$ or $\HQ$. The latter possibility would imply that $\C^{p,q}_\R$
    carries an $\su(p,q)$-invariant quaternionic structure; but this cannot be the case since
    $\C^n$ is not self-conjugate as an $\sli(n,\C)$-module (see \cite{Oni} for further details).
    This implies the first part of the statement, and the second follows readily.
\end{proof}

If we choose the real coordinates in $\C^{p,q}_\R$ ordered as $x_1, \dots, x_n, y_1, \dots, y_n$,
where $z = (z_1, \dots, z_n) \in \C^{p,q}_\R$ and $z_j = x_j + i y_j$, then the matrix
representation of $\left<\cdot, \cdot\right>_0$ is given by
\[
    \begin{pmatrix}
        I_{p,q} & 0 \\
        0 & I_{p,q}
    \end{pmatrix}.
\]
In what follows, $\so(2p,2q)$ will denote the pseudo-orthogonal Lie algebra for the latter matrix.
As noted above, the bilinear form $\left<\cdot, \cdot\right>_0$ is $\su(p,q)$-invariant. Thus, our
choice of coordinates realize the following embedding of Lie algebras.
\begin{align}
    \label{eq:su-so1}
    \varphi : \uni(p,q) &\rightarrow \so(2p,2q) \\
    A + i B &\mapsto
        \begin{pmatrix}
            A & -B \\
            B & A
        \end{pmatrix}, \nonumber
\end{align}
where $A, B$ are real matrices. Note that such $A,B$ necessarily satisfy:
\begin{align*}
    A^tI_{p,q} + I_{p,q}A &= 0 \\
    B^tI_{p,q} - I_{p,q}B &= 0.
\end{align*}

We will also consider the pseudo-orthogonal Lie algebras $\so(2p,2q+1)$ and $\so(2p+1,2q)$ given by
the following matrices,
\[
    \begin{pmatrix}
        I_{p,q} & 0 & 0 \\
        0 & I_{p,q} & 0 \\
        0 & 0 & -1
    \end{pmatrix}, \quad
    \begin{pmatrix}
        1 & 0 & 0 \\
        0 & I_{p,q} & 0 \\
        0 & 0 & I_{p,q}
    \end{pmatrix},
\]
respectively. Correspondingly, we define the following embeddings of Lie algebras.
\begin{align}
    \label{eq:su-so2}
    \psi_1 : \uni(p,q) &\rightarrow \so(2p,2q+1) &
    \psi_2 : \uni(p,q) &\rightarrow \so(2p+1,2q) \\
    A &\mapsto
    \begin{pmatrix}
        \varphi(A) & 0 \\
        0 & 0
    \end{pmatrix} &
    A &\mapsto
    \begin{pmatrix}
        0 & 0 \\
        0 & \varphi(A)
    \end{pmatrix}. \nonumber
\end{align}

We now describe the decomposition into irreducible submodules of the $\su(p,q)$-modules of $\so(2p,
2q)$, $\so(2p,2q+1)$ and $\so(2p+1,2q)$. In this work the $\su(p,q)$-module structure of these
spaces will always be considered as given by the embeddings from \eqref{eq:su-so1} and
\eqref{eq:su-so2}.

\begin{lemma}
    \label{lem:su-so-decomp-phi}
    Let $p, q \geq 1$ and $n = p+q \geq 3$. For $(p,q) \not= (2,2)$, the decomposition of
    $\so(2p,2q)$ into irreducible $\su(p,q)$-submodules is given by an isomorphism
    \[
    \so(2p,2q) \simeq \su(p,q) \oplus \R \oplus (\wedge^2\C^n)_\R.
    \]
    For $p=q=2$ the same isomorphism holds, but the last summand is not irreducible. Furthermore,
    for the induced Lie algebra structure on the righthand side of the above isomorphism we have
    \begin{align*}
        [\su(p,q) \oplus \R, (\wedge^2\C^n)_\R] &= (\wedge^2\C^n)_\R \\
        [(\wedge^2\C^n)_\R, (\wedge^2\C^n)_\R] &= \su(p,q) \oplus \R \\
        [\R, (\wedge^2\C^n)_\R] &= (\wedge^2\C^n)_\R.
    \end{align*}
\end{lemma}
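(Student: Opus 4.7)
The plan is to construct the decomposition via the complex structure $J_0 := \varphi(iI_n) \in \so(2p,2q)$. Since $J_0^2 = -I$, conjugation by $J_0$ defines an involution of $\so(2p,2q)$, and I split the Lie algebra into eigenspaces
\[
    \so(2p,2q) = \g^+ \oplus \g^-,
\]
consisting of matrices that commute, respectively anticommute, with $J_0$. This is a $\mathbb{Z}/2$-grading: $[\g^\epsilon, \g^\delta] \subset \g^{\epsilon\delta}$. Identifying $\R^{2n}$ with $\C^n$ via $J_0$, an endomorphism commutes with $J_0$ iff it is $\C$-linear; a $\C$-linear map that preserves the symmetric form $\Re\langle\cdot,\cdot\rangle_{p,q}$ automatically preserves $\Im\langle\cdot,\cdot\rangle_{p,q}$ as well (since the latter equals $\Re\langle J_0\,\cdot\,,\cdot\rangle_{p,q}$), hence preserves the full Hermitian form. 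This gives $\g^+ = \varphi(\uni(p,q)) = \varphi(\su(p,q)) \oplus \R J_0 \simeq \su(p,q) \oplus \R$ as $\su(p,q)$-modules.

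The identification of $\g^-$ proceeds by complexifying and using $\so(V_\R) \simeq \wedge^2 V_\R$ (Lemma~\ref{lem:so(E)}, with $V_\R = \C^{p,q}_\R$). The complexification $V_\R \otimes_\R \C$ decomposes as $V \oplus \overline{V}$, where $V = \C^n$ is the defining $\sli(n,\C)$-module, so
\[
    \wedge^2_\C(V \oplus \overline{V}) = \wedge^2 V \;\oplus\; (V \otimes \overline{V}) \;\oplus\; \wedge^2 \overline{V}.
\]
The $\C$-linear extension of $J_0$ acts by $+i$ on $V$ and $-i$ on $\overline{V}$, inducing multiplication by $+1$ on $V \otimes \overline{V}$ and by $-1$ on $\wedge^2 V \oplus \wedge^2 \overline{V}$. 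Hence $\g^- \otimes_\R \C = \wedge^2 V \oplus \wedge^2 \overline{V}$, and the conjugation exchanging $V$ with $\overline{V}$ identifies the real form as the realification $(\wedge^2 \C^n)_\R$. (Likewise the real form of $V \otimes \overline{V}$ is $\uni(p,q)$, reconfirming the identification of $\g^+$.) This yields the $\su(p,q)$-module isomorphism $\so(2p,2q) \simeq \su(p,q) \oplus \R \oplus (\wedge^2 \C^n)_\R$, which holds for all $(p,q)$.

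Irreducibility of $(\wedge^2 \C^n)_\R$ as a real $\su(p,q)$-module then follows from the classification in \cite{Oni} used in the proof of Lemma~\ref{lem:lowest-dim}: for $n \neq 4$ the weight $\omega_2$ differs from $\omega_{n-2}$, so $\wedge^2 \C^n$ is not self-conjugate and its realification is irreducible; for $n = 4$, the Cartan index of $\omega_2$ is $-1$ when $(p,q) \in \{(1,3),(3,1)\}$, again yielding irreducibility. In the remaining case $(p,q) = (2,2)$ the Cartan index is $+1$, so $\wedge^2 \C^4$ has an $\su(2,2)$-invariant real form of dimension $6$, and $(\wedge^2 \C^4)_\R$ decomposes as two copies of it, hence is reducible as stated.

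For the bracket identities the grading already gives $[\g^+, \g^-] \subset \g^-$ and $[\g^-, \g^-] \subset \g^+$. A direct computation yields $[J_0, X] = -2 X J_0$ for $X \in \g^-$, and since right multiplication by $J_0$ is a linear automorphism (as $J_0^2 = -I$) that preserves $\g^-$, the operator $\mathrm{ad}(J_0)\colon \g^- \to \g^-$ is an isomorphism. Hence $[\R J_0, \g^-] = \g^-$, giving both $[\R, \g^-] = \g^-$ and $[\su(p,q)\oplus\R, \g^-] = \g^-$. For the remaining identity I observe that $[\g^-, \g^-] + \g^-$ is an ideal of $\so(2p,2q)$ by the Jacobi identity together with the $\mathbb{Z}/2$-grading; since $\so(2p,2q)$ is simple (as $2n \geq 6$) and this ideal is non-zero, it equals the whole Lie algebra, forcing $[\g^-, \g^-] = \g^+ = \su(p,q) \oplus \R$. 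The principal difficulty lies in the second paragraph: tracking both the complex structure $J_0$ and the real structure of $V_\R$ through complexification so as to single out $(\wedge^2 \C^n)_\R$ (rather than some other real form) as the target of the identification.
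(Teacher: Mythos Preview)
Your proof is correct, and it is considerably more explicit than the paper's. The paper's argument is a one-line citation: it invokes the symmetric pair $(\so(2p,2q),\uni(p,q))$ and refers to Berger's Table~II to read off both the module decomposition and the bracket relations. Your argument unpacks this from scratch. The involution you use (conjugation by $J_0 = \varphi(iI_n)$) is precisely the Cartan involution of that symmetric pair, so your $\g^+$ and $\g^-$ are its isotropy subalgebra and complementary subspace; the $\mathbb{Z}/2$-grading you exploit is the symmetric-pair grading. Where the paper simply reads the isomorphism type of the complement from Berger, you compute it directly by complexifying $\so(V_\R)\simeq\wedge^2 V_\R$ and tracking the $\pm i$ eigenspaces of $J_0$, which also lets you see transparently that the realification $(\wedge^2\C^n)_\R$ (rather than some real form) is what appears. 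For the bracket identities, the paper relies on the known structure of the symmetric pair, while you derive $[\g^-,\g^-]=\g^+$ from simplicity of $\so(2p,2q)$ via the ideal $[\g^-,\g^-]+\g^-$, and obtain $[\R,\g^-]=\g^-$ from the explicit formula $[J_0,X]=-2XJ_0$. Your route is longer but self-contained and independent of the classification tables; the paper's route is terse but outsources the content to \cite{Berger}.
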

\begin{proof}
    It follows from the fact that $(\so(2p,2q), \uni(p,q))$ is a symmetric pair together with its structure (see Table~II from \cite{Berger}).
\end{proof}

\begin{lemma}
    \label{lem:su-so-decomp-psi}
    Let $p, q \geq 1$ and $n = p+q \geq 3$. For $(p,q) \not= (2,2)$, the decomposition of either
    $\so(2p,2q+1)$ or $\so(2p+1,2q)$ into irreducible $\su(p,q)$-submodules is given by the sum
    \[
        \su(p,q) \oplus \R \oplus (\wedge^2\C^n)_\R \oplus \C^{p,q}_\R.
    \]
    For $p=q=2$ the same isomorphism holds, but the second to last summand is not irreducible.
    Furthermore, if we induce on the above sum a corresponding Lie algebra structure, then we have
    \begin{align*}
        [\su(p,q) \oplus \R \oplus (\wedge^2\C^n)_\R, \C^{p,q}_\R] &= \C^{p,q}_\R \\
        [\C^{p,q}_\R, \C^{p,q}_\R] &= \su(p,q) \oplus \R \oplus (\wedge^2\C^n)_\R \\
        [\R, \C^{p,q}_\R] &= \C^{p,q}_\R
    \end{align*}
\end{lemma}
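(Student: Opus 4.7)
The plan is to reduce the lemma to Lemma~\ref{lem:su-so-decomp-phi} by exploiting the factorization of $\psi_1$ and $\psi_2$ through $\varphi$. By their definitions in Equation~\eqref{eq:su-so2}, both $\psi_1$ and $\psi_2$ are the composition of $\varphi : \uni(p,q) \to \so(2p,2q)$ with the block inclusion of $\so(2p,2q)$ as either the upper-left $2n \times 2n$ block of $\so(2p,2q+1)$ or the lower-right $2n \times 2n$ block of $\so(2p+1,2q)$. A direct matrix computation shows that in either case the inclusion realizes the pair $(\so(2p,2q+1), \so(2p,2q))$ (respectively $(\so(2p+1,2q), \so(2p,2q))$) as a symmetric pair whose complement $\mathfrak{m}$ consists of the matrices supported on the extra row and column, and that this $\mathfrak{m}$ is an irreducible $\so(2p,2q)$-module isomorphic to the standard representation on $\R^{2p,2q}$.

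The next step is to identify $\mathfrak{m}$ as an $\su(p,q)$-module. Since $\varphi$ is by construction the realification of the defining representation of $\uni(p,q)$ on $\C^{p,q}$, the pullback of $\mathfrak{m}$ along $\varphi$ is the module $\C^{p,q}_\R$. Combining this identification with the decomposition of $\so(2p,2q)$ supplied by Lemma~\ref{lem:su-so-decomp-phi} immediately yields the claimed $\su(p,q)$-module decomposition, including the non-irreducibility of the $(\wedge^2\C^n)_\R$ summand when $p=q=2$, which is inherited directly from that Lemma.

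For the bracket relations, the symmetric-pair identities $[\so(2p,2q),\mathfrak{m}] \subseteq \mathfrak{m}$ and $[\mathfrak{m},\mathfrak{m}] \subseteq \so(2p,2q)$ give the required inclusions. The irreducibility of $\C^{p,q}_\R$ as an $\su(p,q)$-module (Lemma~\ref{lem:lowest-dim}) together with non-triviality of the action yields $[\su(p,q), \C^{p,q}_\R] = \C^{p,q}_\R$, and since the $\R$-summand of $\so(2p,2q)$ corresponds under $\varphi$ to $\R \cdot iI$, which acts on $\C^{p,q}_\R$ as the invertible complex-structure operator $J$, the identity $[\R, \C^{p,q}_\R] = \C^{p,q}_\R$ also follows. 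These settle the first and third bracket identities. The main work is to promote the inclusion $[\C^{p,q}_\R, \C^{p,q}_\R] \subseteq \so(2p,2q)$ to equality. For this, I would observe that $\mathfrak{n} := [\C^{p,q}_\R, \C^{p,q}_\R] + \C^{p,q}_\R$ is an ideal of the ambient Lie algebra: closure under bracketing with both $\so(2p,2q)$ and $\C^{p,q}_\R$ follows from the inclusions above and a short Jacobi computation. Since $2n+1 \geq 7$ is odd, the Lie algebras $\so(2p,2q+1)$ and $\so(2p+1,2q)$ are simple, so the nonzero ideal $\mathfrak{n}$ must be the whole algebra, forcing $[\C^{p,q}_\R, \C^{p,q}_\R] = \so(2p,2q)$. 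The main obstacle is bookkeeping in this ideal argument; the module identifications themselves are routine once Lemma~\ref{lem:su-so-decomp-phi} is in hand.
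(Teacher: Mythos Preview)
Your proposal is correct and follows essentially the same route as the paper: factor $\psi_1,\psi_2$ through $\varphi$, use that both $(\so(2p,2q+1),\so(2p,2q))$ and $(\so(2p,2q),\uni(p,q))$ are symmetric pairs, identify the complement $\mathfrak{m}\simeq\R^{2p,2q}\simeq\C^{p,q}_\R$, and then invoke Lemma~\ref{lem:su-so-decomp-phi}. The paper's proof is terser on the bracket equalities, simply appealing to ``their structure'', whereas you supply the explicit ideal argument via simplicity of $\so(2n+1)$; this is a legitimate and slightly more self-contained way to upgrade the symmetric-pair inclusion $[\C^{p,q}_\R,\C^{p,q}_\R]\subseteq\so(2p,2q)$ to equality.
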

\begin{proof}
    We observe that the embedding of $\uni(p,q)$ into either $\so(2p,2q+1)$ or $\so(2p+1,2q)$ is the composition of the embedding of $\uni(p,q)$ into $\so(2p,2q)$ and that of the latter into either $\so(2p,2q+1)$ or $\so(2p+1,2q)$, respectively. Both embeddings define symmetric pairs, so we can use their structure and Lemma~\ref{lem:su-so-decomp-phi} to obtain the conclusion. For that we also use the isomorphism $\C^{p,q}_\R \simeq  \R^{2p,2q}$ of $\so(2p,2q)$-modules.
\end{proof}

We recall the following property of spaces with scalar product.

\begin{lemma}
    \label{lem:so(E)}
    Let $E$ be a finite dimensional real vector space with scalar product. Then, the assignment
    \[
        u\wedge v \mapsto \left<\cdot,u\right> v - \left<\cdot,v\right> u.
    \]
    defines an isomorphism $\varphi : \wedge^2 E \rightarrow \so(E)$ of $\so(E)$-modules.
\end{lemma}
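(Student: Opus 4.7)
The plan is to verify the four properties in order: well-definedness of $\varphi$, that its image lands in $\so(E)$, bijectivity, and $\so(E)$-equivariance. None of these should be genuinely difficult; the whole lemma is essentially a computation, but the pieces need to be laid out carefully.

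First I would observe that the assignment $(u,v)\mapsto \langle\cdot,u\rangle v-\langle\cdot,v\rangle u$ is bilinear and alternating in $u,v$, hence factors through $\wedge^2 E$ to yield a well-defined linear map $\varphi:\wedge^2 E\to\operatorname{End}(E)$. To see that $\varphi(u\wedge v)\in\so(E)$, I would fix $x,y\in E$ and compute
\[
\langle\varphi(u\wedge v)x,y\rangle+\langle x,\varphi(u\wedge v)y\rangle
=\langle x,u\rangle\langle v,y\rangle-\langle x,v\rangle\langle u,y\rangle+\langle y,u\rangle\langle x,v\rangle-\langle y,v\rangle\langle x,u\rangle,
\]
which vanishes by the symmetry of $\langle\cdot,\cdot\rangle$.

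Next, for bijectivity, I would recall that $\dim\wedge^2 E=\dim\so(E)=\binom{n}{2}$ where $n=\dim E$, so it suffices to show $\varphi$ is injective. If $\varphi(\sum_{i<j}c_{ij}\,e_i\wedge e_j)=0$ for a basis $(e_i)$ of $E$, pairing against the dual basis with respect to the (nondegenerate) scalar product and evaluating on an arbitrary vector recovers every $c_{ij}$, so the element is zero.

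Finally, for $\so(E)$-equivariance, the $\so(E)$-action on $\wedge^2 E$ is the derivation action $A\cdot(u\wedge v)=Au\wedge v+u\wedge Av$, and on $\so(E)$ it is the bracket $A\cdot T=[A,T]$. Using $\langle Ax,u\rangle=-\langle x,Au\rangle$ because $A\in\so(E)$, I would compute
\[
\varphi(Au\wedge v+u\wedge Av)(x)=-\langle Ax,u\rangle v-\langle x,v\rangle Au+\langle x,u\rangle Av+\langle Ax,v\rangle u,
\]
and compare directly with
\[
[A,\varphi(u\wedge v)](x)=\langle x,u\rangle Av-\langle x,v\rangle Au-\langle Ax,u\rangle v+\langle Ax,v\rangle u,
\]
which coincide term by term. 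The main (very mild) obstacle is just keeping signs and the placement of $A$ straight in this last computation; once done, all the required properties are in place.
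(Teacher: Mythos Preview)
Your proof is correct and complete. The paper itself states this lemma as a recalled fact (``We recall the following property of spaces with scalar product'') and gives no proof, so there is nothing to compare against; your direct verification of well-definedness, skew-symmetry, bijectivity via a dimension count plus injectivity, and $\so(E)$-equivariance is exactly the standard argument and fills in what the paper omits.
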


As a consequence, we obtain the following result.

\begin{lemma}
    \label{lem:wedgeCpqR}
    Let $p, q \geq 1$ and $n = p+q \geq 3$. For $(p,q) \not= (2,2)$, the decomposition of $\wedge^2 \C^{p,q}_\R$ into irreducible $\su(p,q)$-submodules is given by the sum
    \[
        \su(p,q) \oplus \R \oplus (\wedge^2\C^n)_\R.
    \]
    For $p=q=2$ the same decomposition holds, but the last summand is not irreducible.
\end{lemma}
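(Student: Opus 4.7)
The plan is to obtain the statement as an immediate consequence of Lemmas~\ref{lem:so(E)} and \ref{lem:su-so-decomp-phi}. First I would apply Lemma~\ref{lem:so(E)} with $E = \C^{p,q}_\R$ equipped with the non-degenerate symmetric bilinear form $\langle\cdot,\cdot\rangle_0$ of signature $(2p,2q)$ introduced before Lemma~\ref{lem:Cpq-scalar}. In the ordered real coordinates $x_1,\dots,x_n,y_1,\dots,y_n$ on $\C^{p,q}_\R$ this scalar product has matrix $\mathrm{diag}(I_{p,q},I_{p,q})$, so the corresponding orthogonal Lie algebra is precisely $\so(2p,2q)$ as defined in the preamble to Equation~\eqref{eq:su-so1}. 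Lemma~\ref{lem:so(E)} therefore produces an isomorphism
\[
    \wedge^2 \C^{p,q}_\R \;\simeq\; \so(2p,2q)
\]
of $\so(2p,2q)$-modules.

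Next I would observe that Equation~\eqref{eq:su-so1} embeds $\uni(p,q)$, and in particular $\su(p,q)$, into $\so(2p,2q)$. Since the isomorphism of Lemma~\ref{lem:so(E)} is one of $\so(2p,2q)$-modules, restricting the action along the embedding $\su(p,q) \hookrightarrow \so(2p,2q)$ shows that the displayed isomorphism is also one of $\su(p,q)$-modules. At this point the result is reduced to describing $\so(2p,2q)$ as an $\su(p,q)$-module.

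Finally I would invoke Lemma~\ref{lem:su-so-decomp-phi}, which yields the decomposition
\[
    \so(2p,2q) \;\simeq\; \su(p,q) \oplus \R \oplus (\wedge^2\C^n)_\R
\]
as $\su(p,q)$-modules; for $(p,q)\neq(2,2)$ the summands are irreducible, whereas for $(p,q)=(2,2)$ the last summand further decomposes. Transferring this decomposition through the $\su(p,q)$-equivariant isomorphism above gives the stated conclusion in both cases. There is no real obstacle in this proof: the content sits entirely in Lemmas~\ref{lem:so(E)} and \ref{lem:su-so-decomp-phi}, and the only point worth verifying carefully is that the scalar product $\langle\cdot,\cdot\rangle_0$ realizes $\so(\C^{p,q}_\R)$ as exactly the $\so(2p,2q)$ appearing in Lemma~\ref{lem:su-so-decomp-phi}, so that the two $\su(p,q)$-actions match.
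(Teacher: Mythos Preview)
Your proposal is correct and follows essentially the same argument as the paper: apply Lemma~\ref{lem:so(E)} to $\C^{p,q}_\R$ with the scalar product $\langle\cdot,\cdot\rangle_0$ to identify $\wedge^2\C^{p,q}_\R$ with $\so(2p,2q)$ as $\so(2p,2q)$-modules, observe that restricting along the embedding $\su(p,q)\hookrightarrow\so(2p,2q)$ of Equation~\eqref{eq:su-so1} makes this an isomorphism of $\su(p,q)$-modules, and then read off the decomposition from Lemma~\ref{lem:su-so-decomp-phi}. The paper additionally cites Lemma~\ref{lem:Cpq-scalar} to justify that this is the right scalar product, but your explicit identification via the coordinate matrix accomplishes the same thing.
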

\begin{proof}
    Consider $\C^{p,q}_\R$ with the (unique up to a constant) $\su(p,q)$-invariant scalar product from Lemma~\ref{lem:Cpq-scalar}. By Lemma~\ref{lem:so(E)} there is a natural isomorphism $\varphi : \wedge^2 \C^{p,q}_\R \rightarrow \so(\C^{p,q}_\R)$ of $\so(\C^{p,q}_\R)$-modules. We also note that $\so(\C^{p,q}_\R)$ is isomorphic to $\so(2p,2q)$ so that the embedding of $\su(p,q) \hookrightarrow \so(2p,2q)$ given by Equation~\eqref{eq:su-so1} is an intertwining map for the representations of $\su(p,q)$ on $\wedge^2\C^{p,q}_\R$ and of $\so(\C^{p,q}_\R)$ on itself. In particular, $\varphi$ is an isomorphism of $\su(p,q)$-modules and the result follows from Lemma~\ref{lem:su-so-decomp-phi}.
\end{proof}

\end{document}